\numberwithin{equation}{section}
\theoremstyle{plain}
\newtheorem{dummytheorem}{Dummy-Theorem}[section]
\newcommand{\proofendsign}{$\Box$} % \rule{2mm}{2mm}
\newtheorem{lemma}[dummytheorem]{Lemma}
\newtheorem{theorem}[dummytheorem]{Theorem}
\newtheorem{proposition}[dummytheorem]{Proposition}
\newtheorem{corollary}[dummytheorem]{Corollary}
\newtheorem{example}[dummytheorem]{Example}
\newtheorem{remark}[dummytheorem]{Remark}
\newenvironment{proof}{{\noindent \textbf{Proof} }}
 {{\hspace*{\fill}\proofendsign\par\bigskip}}
\newcommand{\Flinks}{F^{\leftarrow}}
\newcommand{\hF}{\widehat{F}}
\newcommand{\NNN}{\mathbb{N}}
\newcommand{\R}{\mathbb{R}}
\newcommand{\RRR}{\mathbb{R}}
\newcommand{\F}{\mathbb{F}}
\newcommand{\FFF}{\mathbb{F}}
\newcommand{\TTT}{\mathbb{T}}
\newcommand{\EEE}{\mathbb{E}}
\newcommand{\pr}{\mathbb{P}}
\newcommand{\MP}{\mathbb{P}}
\newcommand{\MQ}{\mathbb{Q}}
\newcommand{\ex}{\mathbb{E}}
\newcommand{\vari}{\mathbb{V}\textrm{ar}}
\newcommand{\covi}{\mathbb{C}\textrm{ov}}
\newcommand{\eins}{\mathbbm{1}}
\newcommand{\cB}{\mathcal B}
\newcommand{\cF}{\mathcal F}
\newcommand{\cH}{\mathcal H}
\newcommand{\cK}{\mathcal K}
\newcommand{\cM}{\mathcal M}
\newcommand{\cP}{\mathcal P}
\newcommand{\cR}{\mathcal R}
\newcommand{\cU}{\mathcal U}
\newcommand{\OFP}{(\Omega,{\cal F},\pr)}
\newcommand{\tOFP}{(\overline{\Omega},\overline{{\cal F}},\overline{\pr})}
\def\bcswitch{\left\{\renewcommand{\arraystretch}{1.2}\begin{array}{c@{,~}c}}
\def\ecswitch{\end{array}\right.}
\newcommand{\argmin}{\operatornamewithlimits{argmin}}
\begin{document}
\title{First order asymptotics of the sample average approximation method to solve risk averse stochastic programs}
\author{
Volker Krätschmer\footnote{Faculty of Mathematics, University of Duisburg--Essen, { volker.kraetschmer@uni-due.de}}
}
%\begin{frontmatter}
%\title{Optimal stopping under  probability distortions: A randomized stopping times approach \thanksref{T1}}
%\runtitle{Optimal stopping under probability distortions}
%\thankstext{T1}{
%Footnote to the title with the ``thankstext'' command.
%}
\date{}

\maketitle
\begin{abstract}
We investigate statistical properties of the optimal value of the Sample Average Approximation of stochastic programs, continuing the study \cite{Kraetschmer2022}. Central Limit Theorem type results are derived for the optimal value.  As a crucial point the investigations are based on a new type of conditions from the theory of empirical processes which do not rely on pathwise analytical properties of the goal functions. In particular, continuity or convexity in the parameter is not imposed in advance as usual in the literature on the Sample Average Approximation method. It is also shown that the new condition is satisfied if the paths of the goal functions are H\"older continuous so that the main results carry over in this case. Moreover, the main results are applied to goal functions whose paths are piecewise H\"older continuous as e.g. in two stage mixed-integer programs. The main results are shown for classical risk neutral stochastic programs, but we also demonstrate how to apply them to the Sample Average Approximation of risk averse stochastic programs. In this respect we consider stochastic programs expressed in terms of absolute semideviations and divergence risk measures.
\end{abstract}

\textbf{Keywords:} Risk averse stochastic program, Sample Average Approximation, absolute semideviations, divergence risk measures, covering numbers.
\section{Introduction}
Consider a classical risk neutral stochastic program 
\begin{equation}
\label{optimization risk neutral}
\inf_{\theta\in\Theta}\EEE\big[G(\theta,Z)\big],
\end{equation}
where $\Theta$ denotes a compact subset of $\RRR^{m}$, whereas $Z$ stands for a $d$-dimensional random vector with distribution $\MP^{Z}$. In general the parameterized distribution of the goal function $G$ is unknown, but some information is available by i.i.d. samples. Using this information, a general device to solve approximately  problem (\ref{optimization risk neutral}) is provided by the so-called \textit{Sample Average Approximation} (\textit{SAA}) (see \cite{ShapiroEtAl}). For explanation, let us consider a sequence $(Z_{j})_{j\in\NNN}$ of independent $d$-dimensional random vectors on some fixed complete atomless probability space $\OFP$ which are identically distributed as the $d$-dimensional random vector $Z$.
%we shall assume
%\begin{enumerate}
%\item [(A 3)] $\{\sqrt{F_{\theta}\cdot (1- F_{\theta})}\mid \theta\in\Theta\}$ is uniformly $\mu$-integrable.
%\item [(A 4)] There exist $\varepsilon$ and $\theta^{*}\in\Theta$ such that
%$$
%\EEE\left[\exp\left(\frac{C(Z)^{2}}{\varepsilon_{1}}\right)\right] < \infty~\quad\mbox{and}\quad 
%\EEE\left[G(\theta^{*},Z)^{2}\right] < \infty.
%$$
%\end{enumerate}
Let us set
$$
\hat{F}_{n,\theta}(t) := \frac{1}{n}~\sum_{j=1}^{n}\eins_{]-\infty,t]}\big(G(\theta,Z_{j})\big)
$$
to define the empirical distribution function $\hat{F}_{n,\theta}$ of $G(\theta,Z)$ based on the i.i.d. sample 
$(Z_{1},\cdots,Z_{n})$. Then the SAA method approximates the genuine optimization problem (\ref{optimization risk neutral}) by the following one
\begin{equation}
\label{SAAriskneutral}
\inf_{\theta\in\Theta}\int_{\RRR}t ~d\hat{F}_{n,\theta}(t) = \inf_{\theta\in\Theta}\frac{1}{n}~\sum_{j=1}^{n}G(\theta,Z_{j})\quad(n\in\NNN).
\end{equation}
The optimal values depend on the sample size and the realization of the samples of $Z$. Their asymptotic behaviour with increasing sample size, also known as the first order asymptotics of (\ref{optimization risk neutral}), is well-known. More precisely, 
the sequence of optimal values of the approximated optimization problem converges 
$\MP$-a.s. to the optimal value of the genuine stochastic program. Moreover, if  $G$ is Lipschitz continuous in $\theta$, and if \eqref{optimization risk neutral} has a unique solution, then the stochastic sequence 
\begin{equation}
\label{sequence}
\left(\sqrt{n}\Big[\inf_{\theta\in\Theta}\int_{\RRR}t ~d\hat{F}_{n,\theta}(t) - \inf_{\theta\in\Theta}\EEE\big[G(\theta,Z)\big]\Big]\right)_{n\in\NNN}
\end{equation}
is asymptotically normally distributed. 
In \cite{EichhornRoemisch2007} asymptotic distributions of this stochastic sequence have also be found for stochastic mixed-integer programs, where typically the objectives are not continuous in the parameter. For these results, and more on asymptotics of the SAA method the reader may consult the monograph \cite{ShapiroEtAl}, and in addition the contributions \cite{Pflug1999} and \cite{Roemisch2003}.
\medskip

In several fields like finance, insurance or microeconomics, the assumption of risk neutral decision makers are considered to be too idealistic. Instead there it is preferred to study the behaviour of actors with a more cautious attitude, known as risk aversion. In this view the optimization problem (\ref{optimization risk neutral}) should be replaced with a \textit{risk averse} stochastic program, i.e. an optimization problem
\begin{equation}
\label{optimization risk averse}
\inf_{\theta\in\Theta}\rho\big(G(\theta,Z)\big),
\end{equation}
where $\rho$ stands for a functional which is nondecreasing w.r.t. the increasing convex order. A general class of functionals fulfilling this requirement is built by the so called 
\textit{distribution-invariant convex risk measures} (see e.g. \cite{FoellmerSchied2011}, \cite{ShapiroEtAl}). They play an important role as building blocks in quantitative risk management (see \cite{McNeilEmbrechtsFrey2005}, \cite{PflugRoemisch2007}, \cite{Rueschendorf2013}), and they have been suggested as a systematic approach for calculations of insurance premia
(cf. \cite{Kaasetal2008}). \textit{Distribution-invariance} denotes the property that a functional $\rho$ has the same outcome for random variables with identical distribution. Hence, a distribution-invariant convex risk measure $\rho$ may be associated with a functional $\cR_{\rho}$ on sets of distribution functions (see e.g. \cite[Section 4.2]{KraetschmerSchiedZaehle2017}, and also \cite[(2.4)]{ClausKraetschmerSchultz2017}). In this case (\ref{optimization risk averse}) reads as follows
$$
\inf_{\theta\in\Theta}\cR_{\rho}(F_{\theta}),
$$
where $F_{\theta}$ is the distribution function of $G(\theta,Z)$. Then we may modify the SAA method by 
\begin{equation}
\label{SAA risk averse}
\inf_{\theta\in\Theta}\cR_{\rho}(\hat{F}_{n,\theta})\quad(n\in\NNN).
\end{equation}
As the title says, the subject of the paper is the first order asymptotics of the SAA method for (\ref{optimization risk averse}) with $\rho$ being a distribution-invariant convex risk measure.
It is already known that under rather general conditions on the mapping $G$ we have 
$$
\inf_{\theta\in\Theta}\cR_{\rho}\big(\hat{F}_{n,\theta}\big)\to \inf_{\theta\in\Theta}\cR_{\rho}\big(F_{\theta}\big)\quad\MP-\mbox{a.s.}
$$
%for every risk functional $\cR_{\rho_{g}}$ associated with a concave distortion risk measure $\rho_{g}$ 
(see \cite{Shapiro2013a}). In \cite{Kraetschmer2022} nonasymptotic upper estimates of
\begin{equation*}
\MP\Big(\Big\{\big|\inf_{\theta\in\Theta}\cR_{\rho}\big(\hat{F}_{n,\theta}\big) - \inf_{\theta\in\Theta}\cR_{\rho}\big(F_{\theta}\big)\big|\geq\varepsilon\Big\}\Big)\quad(n\in\NNN,\varepsilon > 0)
\end{equation*}
are derived, dependent on the sample size $n$. Besides the risk neutral case risk averse stochastic programs in terms of \textit{upper semideviations} and \textit{divergence risk measures} were considered there. As a by product uniform tightness of the 
stochastic sequence 
$$
\Big(\sqrt{n}\big[\inf_{\theta\in\Theta}\cR_{\rho}\big(\hat{F}_{n,\theta}\big) - \inf_{\theta\in\Theta}\cR_{\rho}\big(F_{\theta}\big)\big]\Big)_{n\in\NNN}
$$
is obtained in all cases. In this paper we continue this work, focussing on the asymptotic distributions of the stochastic sequence. 
%Hence, we mainly focus on the asymptotic distribution of 
%the stochastic sequence 
%$$
%\Big(\sqrt{n}\big[\inf_{\theta\in\Theta}\cR_{\rho}\big(\hat{F}_{n,\theta}\big) - \inf_{\theta\in\Theta}\cR_{\rho}\big(F_{\theta}\big)\big]\Big)_{n\in\NNN}.
%$$
To the best of our knowledge, this issue has been studied in \cite{GuiguesKraetschmerShapiro2018} and \cite{DentchevaEtAl2017} only. In both contributions, $G$ is assumed to be Lipschitz continuous in $\theta$, and a subclass of distribution-invariant convex risk measures of a specific form is considered. We shall extend the investigations in respect of allowing for more general goal functions $G$, which make possible to apply the results to stochastic programs whose goal functions are not continuous in the parameter. The value functions of two stage mixed-integer stochastic programs are prominent examples for such a type goal functions. Concerning the choice of distribution-invariant convex risk measures we shall restrict ourselves to \textit{absolute semideviations} and \textit{divergence risk measures}. These classes have only a small intersection with the class of distribution-invariant convex risk measures in \cite{GuiguesKraetschmerShapiro2018} but no one with the class in \cite{DentchevaEtAl2017}.
\medskip

The paper is organized as follows. We shall start with a general central limit theorem type result for the optimal values of classical risk neutral stochastic programs. The point is that we may extend this result if the SAA method is applied to risk averse stochastic programs. In Section \ref{FOA absolute semideviations} this will be demonstrated in the case that stochastic programs are expressed in terms of absolute semideviations, whereas in Section \ref{generalized divergence risk measures} the application to stochastic programs under divergence risk measures are considered. Our main asymptotic results are based on a technical result concerning the convergence of the sequence $\big(\sum_{j=1}^{n}G(\cdot,Z_{j})/n\big)_{n\in\NNN}$ in the path space. It will be formulated in Section \ref{basic result}. Finally Section \ref{Beweise} gathers proofs of results from the previous sections.
\par
The essential new ingredient of our results is to replace analytic conditions on the paths $G(\cdot,z)$ with requirements which intuitively make the family $\{G(\theta,Z)\mid\theta\in\Theta\}$ of random variables small in some certain sense. Fortunately, the respective invoked conditions are satisfied if the paths $G(\cdot,z)$ are H\"older continuous. We shall also see that we may utilize our results to study the SAA method for stochastic programs, where the paths $G(\cdot,z)$ are piecewise H\"older continuous but not necessarily continuous or convex. Value functions of two stage mixed-integer programs are typical examples for goal functions of such a kind. 
%%%%%%%%%%%%%%%%%%%%%%%%%%%%%%%%%%%%%%%%%%%%%%%%%%%%%%%%%%%%%%%%%%%%%%%%%%%%%%%%%%%%%%%%%%%%%%%%%%%%%%%%%%%%%%%%%%%%%%%%%%%%%%%%%%%%%%%%%%%%%%%%%%%%%%%%%%%%%%%%%%%%%%%%%%%%%%%%%%%%%%%%%%%%%%%%%%%%%%%%%%%%%%%%%%%%%%%%%%%%%%%%%%%%%%%%%%%%%%%%%%%%%%%%%%%%%%%%%%%%%%%%%%%%%%%%%
\section{First order asymptotics in the risk neutral case}
\label{Asymptotics risk neutral}
In this section we study the SAA \eqref{SAAriskneutral} associated with the risk neutral stochastic program \eqref{optimization risk neutral}. 
We shall restrict ourselves to mappings $G$ which satisfy the following properties.
\begin{itemize}
\item[(A1)] The set $\Theta$ is a compact subset of $\RRR^{m}$. The mapping $G$ is measurable w.r.t. the product $\cB(\Theta)\otimes\cB(\RRR^{d})$ of the Borel $\sigma$-algebra $\cB(\Theta)$ on $\Theta$ and the Borel $\sigma$-algebra on $\RRR^{d}$, and $G(\cdot,z)$ is lower semicontinuous for every $z\in\RRR^{d}$.
\item[(A2)] 
There is some strictly positive $\MP^{Z}$-integrable mapping $\xi:\RRR^{d}\rightarrow\RRR$ such that 
$$
\sup\limits_{\theta\in\Theta}|G(\theta,z)|\leq\xi(z)\quad\mbox{for}~z\in\RRR^{d}.
$$ 
\end{itemize}
Note that under these assumptions the optimization problems \eqref{optimization risk neutral} and \eqref{SAAriskneutral} are well defined with finite optimal values.
\begin{proposition}
\label{MessbarkeitLoesbarkeit}
Under (A1), (A2) the following statements are valid.
\begin{itemize}
\item [1)] $\inf\limits_{\theta\in\Theta}\frac{1}{n}\sum\limits_{j=1}^{n}G(\theta,Z_{j}) - \inf\limits_{\theta\in\Theta}\EEE[G(\theta,Z_{1})]$ is a random variable on $\OFP$ for every $n\in\NNN$.
\item [2)] There exists a solution of \eqref{optimization risk neutral}, and the set of solutions is compact.
\end{itemize}
\end{proposition}
The proof may be found in Subsection \ref{Beweis von Proposition MessbarkeitLoesbarkeit}.
\smallskip

In order to develop nonasymptotic confidence intervals on the optimal value of \eqref{optimization risk neutral} the authors in \cite{GuiguesEtAl2017} provide specific separate upper estimates for the deviation probabilities
\begin{align}
&\label{deviation unten}
\MP\Big(\Big\{\inf\limits_{\theta\in\Theta}~\frac{1}{n}~\sum_{j=1}^{n}G(\theta,Z_{j}) - \inf\limits_{\theta\in\Theta}\EEE\big[G(\theta,Z_{1})\big]\leq -\varepsilon/\sqrt{n}\Big\}\Big)\quad(n\in\NNN,\varepsilon > 0),\\
&\label{deviation oben}
\MP\Big(\Big\{\inf\limits_{\theta\in\Theta}~\frac{1}{n}~\sum_{j=1}^{n}G(\theta,Z_{j}) - \inf\limits_{\theta\in\Theta}\EEE\big[G(\theta,Z_{1})\big]\geq\varepsilon/\sqrt{n}\Big\}\Big)\quad(n\in\NNN,\varepsilon > 0).
\end{align}
They assume $G$ to be convex in $\theta$ such that the goal function of \eqref{optimization risk neutral} is differentiable, and they also impose conditions on the tail behaviour of the random variables $G(\theta,Z)$. Avoiding such regularity conditions, in \cite{Kraetschmer2022} upper estimations has been derived for deviations probabilities
\begin{equation}
\label{deviation beidseitig}
\MP\Big(\Big\{\big|\inf_{\theta\in\Theta}\frac{1}{n}\sum_{i=1}^{n}G(\theta,Z_{j}) - \inf_{\theta\in\Theta}\EEE[G(\theta,Z)]\big|\geq\varepsilon\Big\}\Big)\quad(n\in\NNN,\varepsilon > 0).
\end{equation}
There, no further restrictions beyond property (A2) are imposed on the tail behaviour of the random variables $G(\theta,Z)$. Moreover, the analytical requirements on paths of $G$ are replaced with some specific condition on the function class $\{G(\theta,\cdot)\mid\theta\in\Theta\}$ which we shall explain in more detail soon in this section.  
\smallskip

The results in \cite{Kraetschmer2022} 
%It is already known that (A1) and (A2) together with some additional conditions 
already imply
$$
n^{a}\Big[\inf_{\theta\in\Theta}\int_{\RRR}t ~d\hat{F}_{n,\theta}(t) - \inf_{\theta\in\Theta}\EEE\big[G(\theta,Z)\big]\Big]\underset{n\to\infty}{\to} 0\quad\mbox{in probability}\quad\mbox{for}~a\in [0,1/2[
$$
(see \cite[Theorem 2.2]{Kraetschmer2022}). Moreover, the sequence \eqref{sequence} is uniformly tight, i.e. relatively compact w.r.t. the topology of weak covergence (see \cite[Theorem 2.5]{Kraetschmer2022}).
\smallskip

Throughout this section we want to complete the results from \cite{Kraetschmer2022} by some criterion to ensure weak convergence of the sequence \eqref{sequence}, and to find its asymptotic distributions. In contrast to the literature on the first order asymptotics of the SAA method we do not want to impose analytical properties for the objective $G$ like e.g. continuity convexity in the parameter $\theta$. Instead, as in \cite{Kraetschmer2022}, we suggest a condition which makes the function class $\FFF^{\Theta} := \{G(\theta,\cdot)\mid \theta\in\Theta\}$ small in some sense. Convenient ways to express this idea may be provided by general devices from empirical process theory which are based on covering numbers for classes of Borel measurable mappings from $\RRR^{d}$ into $\RRR$ w.r.t. $L^{p}$-norms.  To recall these concepts adapted to our situation, let us fix any nonvoid set $\FFF$ of Borel measurable mappings from $\RRR^{d}$ into $\RRR$ and any probability measure $\MQ$ on $\cB(\RRR^{d})$ with metric $d_{\MQ,p}$ induced by the $L^{p}$-norm $\|\cdot\|_{\MQ,p}$ for $p\in [1,\infty[$.  
\begin{itemize}
\item \textit{Covering numbers for $\FFF$}\\
We use $N\big(\eta,\FFF,L^{p}(\MQ)\big)$ to denote the minimal number to cover $\FFF$ by closed $d_{\MQ,p}$-balls of radius $\eta > 0$ with centers in $\FFF$. We define $N\big(\eta,\FFF,L^{p}(\MQ)\big) := \infty$ if no finite cover is available. 
%\item \textit{Bracketing numbers for $\FFF$}\\
%Given two functions $\psi_{1},\psi_{2}:\RRR^{d}\rightarrow\RRR$ the \textit{bracket} $[\psi_{1},\psi_{2}]$ is the set of all mappings $\psi:\RRR^{d}\rightarrow\RRR$ satisfying $\psi_{1}(z)\leq\psi(z)\leq\psi_{2}(z)$ for every $z\in\RRR^{d}$. For $\varepsilon > 0$ it is an $\epsilon$\textit{-bracket in} $L^{2}(\MP^{Z})$ if $\psi_{1}, \psi_{2}$ are square $\MP^{Z}$-integrable with 
%$\|\psi_{2} - \psi_{1}\|_{\MP^{Z},2} < \varepsilon$. If  $\FFF$ may be covered by finitely many $\varepsilon$-brackets in $L^{2}(\MP^{Z})$ for every $\varepsilon > 0$, the corresponding
%minimal numbers will be denoted by $N_{[]}\big(\varepsilon,\FFF,L^{2}(\MP^{Z})\big)$. Otherwise $N_{[]}\big(\varepsilon,\FFF,L^{2}(\MP^{Z})\big) := \infty$.
\item An \textit{envelope} of $\FFF$ is defined to mean some Borel measurable mapping $C_{\FFF}:\RRR^{d}\rightarrow\RRR$ satisfying $\sup_{h\in\FFF}|h|\leq C_{\FFF}$. If an envelope $C_{\FFF}$ has strictly positive outcomes, we shall speak of a \textit{positive envelope}.
\item $\cM_{\textrm{\tiny fin}}$ denotes the set of all probability measures on $\cB(\RRR^{d})$ with finite support.
\end{itemize}
Usually, upper estimations of covering numbers are used instead of exact calculations.
%Let $\cM_{\textrm{\tiny fin}}$ denote the set of all probability measures on $\cB(\RRR^{d})$ with finite support. The  
\medskip

For abbreviation let us introduce for a class $\FFF$ of Borel measurable functions from $\RRR^{d}$ into $\RRR$ with arbitrary positive envelope $C_{\FFF}$ of $\FFF$ the following notation
\begin{align}
&
\label{Entropie-Integral I}
\overline{J}(\FFF,C_{\FFF},\delta) := \int_{0}^{\delta}\sup_{\MQ\in \cM_{\textrm{\tiny fin}}}\sqrt{\ln\big( N\big(\varepsilon~\|C_{\FFF}\|_{\MQ,2},\FFF,L^{2}(\MQ)\big)\big)}~d\varepsilon.
%\\ 
%&
%\label{Entropie-Integral II}
%J_{[]}(\FFF,C_{\FFF},\delta) := \int_{0}^{\delta}\sqrt{\ln\big(2 N_{[]}\big(\varepsilon~\|C_{\FFF}\|_{\MP^{Z},2},\FFF,L^{2}(\MP^{Z})\big)\big)}~d\varepsilon.
\end{align}
As we shall see, the finiteness of some integral $\overline{J}(\FFF^{\Theta},C_{\FFF^{\Theta}},1)$ is already sufficient for our purposes. Henceforth we shall restrict considerations to ``small'' classes $\FFF^{\Theta}$ in the sense that $\overline{J}(\FFF^{\Theta},C_{\FFF^{\Theta}},1)$ is finite for some positive square $\MP^{Z}$-integrable envelope $C_{\FFF^{\Theta}}$ of $\FFF^{\Theta}$.
\smallskip

In the following result concerning the asymptotic distribution of the sequence \eqref{sequence} we shall use the symbol $\argmin_{\textrm{\tiny RN}}$ to denote the set of minimizers of the problem \eqref{optimization risk neutral} which is nonvoid and compact by Proposition \ref{MessbarkeitLoesbarkeit}. Furthermore, we shall endow $\Theta$ with the alternative semimetric $\overline{d}_{\Theta}$ defined by $\overline{d}_{\Theta}(\theta,\vartheta) := \sqrt{\vari\big(G(\theta,Z_{1}) - G(\vartheta,Z_{1})\big)}$. 
\begin{theorem}
\label{asymptoticDistributionRiskNeutral}
Let (A1), (A2) be fulfilled, where the mapping $\xi$ from (A2) is square $\MP^{Z}$-integrable.  
Using notation \eqref{Entropie-Integral I}, if $\overline{J}(\FFF^{\Theta},\xi,1)$ is finite, then $\overline{d}_{\Theta}$ is totally bounded and there exists some centered Gaussian process $\mathfrak{G} = (\mathfrak{G}_{\theta})_{\theta\in\Theta}$ 
such that 
the sequence \eqref{sequence} converges weakly to 
$
\sup\limits_{\theta\in\argmin_{\textrm{\tiny RN}}}\mathfrak{G}_{\theta}.
$
This Gaussian process has uniformly continuous paths w.r.t. $\overline{d}_{\Theta}$ and satisfies 
$$
\EEE\big[\mathfrak{G}_{\theta}\cdot \mathfrak{G}_{\vartheta}\big] = 
\covi
\big(G(\theta,Z_{1}), G(\vartheta,Z_{1})\big)\quad\mbox{for}~\theta, \vartheta\in\Theta.
$$
%for $\theta,\vartheta\in\Theta$.
%\par
In particular, if the optimization problem \eqref{optimization risk neutral} has a unique solution $\theta^{*}$, under the given assumptions the sequence \eqref{sequence} converges weakly to some centered normally distributed random variable with variance $\vari\big(G(\theta^{*},Z_{1})\big)$. 
\end{theorem}
The proof is delegated to Subsection \ref{Beweis von Hauptresultat}.
\medskip

The result on the asymptotic distribution crucially requires $\overline{J}(\FFF^{\Theta},\xi,1)$ to be finite. This property is always satisfied if the involved covering numbers have polynomial rates. Indeed this relies on the observation, that by using change of variable formula several times along with integration by parts, we obtain
\begin{equation}
\label{Integralabschaetzung}
\int_{0}^{1}\sqrt{v\ln(K/\varepsilon)}~d\varepsilon\leq 2\sqrt{v \ln(K)}\quad\mbox{for}~v\geq 1, K\geq e.
\end{equation}
Inequality \eqref{Integralabschaetzung} may be applied %to simplify \eqref{endliche Entropie-Integrale} 
if there exist $K\geq e, v\geq 1$ such that the following condition is satisfied
\begin{equation}
\label{polynomialRate}
N\big(\varepsilon~\|C_{\FFF^{\Theta}}\|_{\MQ,2},\FFF^{\Theta},L^{2}(\MQ)\big)\big)\leq (K/\varepsilon)^{v}\quad\mbox{for}~\MQ\in\cM_{\textrm{\tiny fin}}\quad\mbox{and}~\varepsilon\in ]0,1[.
\end{equation}
Prominent examples satisfying \eqref{polynomialRate} are provided by so called \textit{VC-subgraph classes} (see e.g. \cite{vanderVaartWellner1996}).
\par
In \cite{Kraetschmer2022} explicit upper estimates of the terms $\overline{J}(\FFF^{\Theta}, \xi,\delta)$ have been derived for objectives $G$ satisfying specific analytical properties. The line of reasoning there is to show property \eqref{polynomialRate} and then to utilize \eqref{Integralabschaetzung} (see Propositions 2.6, 2.8 and their proofs in \cite{Kraetschmer2022}). Let us recall these specializations.
\smallskip

Denoting the Euclidean metric on $\RRR^{m}$ by $d_{m,2}$, the first one is built on the following condition.
\begin{enumerate}
\item[(H)] There exist some $\beta\in ]0,1]$ and a square $\MP^{Z}$-integrable strictly positive mapping $C:\RRR^{d}\rightarrow ]0,\infty[$ such that 
$$
\big|G(\theta,z) - G(\vartheta,z)\big|\leq C(z)~d_{m,2}(\theta,\vartheta)^{\beta}\quad\mbox{for}~z\in\RRR^{d}, \theta, \vartheta\in\Theta.
$$
\end{enumerate}
Property (H) simplifies Theorem \ref{asymptoticDistributionRiskNeutral}.
\begin{example}
\label{Hoelder-Bedingung auxiliary}
Let $\Theta$ be compact, let condition (H) be fulfilled with $\beta\in ]0,1]$, and let $G(\theta,\cdot)$ be Borel measurable for $\theta\in\Theta$. Furthermore, $G(\overline{\theta},\cdot)$ is assumed to be square $\MP^{Z}$-integrable for some $\overline{\theta}\in\Theta$.
\par
First of all (A1) is satisfied (see e.g. \cite[Lemma 6.7.3]{Pfanzagl1994}). Secondly, we are in the position to invoke Proposition 2.6 from \cite{Kraetschmer2022}. Hence, with 
$\Delta(\Theta)$ denoting the diameter of $\Theta$ w.r.t. $d_{m,2}$, the mapping $\xi := C\cdot \Delta(\Theta)^{\beta} + |G(\overline{\theta},\cdot)|$ is square $\MP^{Z}$-integrable satisfying property (A2), and $\overline{J}(\FFF^{\Theta},\xi,1)$ is finite with certain explicit upper estimates. 
In particular, Theorem \ref{asymptoticDistributionRiskNeutral} may be applied directly, and all the statements there carry over immediately. This extends the classical result on the asymptotic distributions of the sequence \eqref{sequence}, where condition (H) with $\beta = 1$ is imposed (see \cite{ShapiroEtAl}).
\end{example}
\smallskip

In the second special case from \cite{Kraetschmer2022} objectives $G$ have been considered having the following kind of structure of piecewise H\"older continuity. 
\begin{enumerate}
\item [(PH)] $
G(\theta,z) = \sum\limits_{i = 1}^{r}\eins_{\bigcap_{l=1}^{s_{i}}\{\Lambda_{il}(\theta, \cdot) + a^{i}_{l}\in I_{il}\}}(z)\cdot G^{i}(\theta,z),
$ 
where
\begin{itemize}
\item $r, s_{1},\dots,s_{r}\in\NNN$,
\item $G^{i}$ satisfies (A1), and (H) with $\beta_{i}\in ]0,1]$ as well as strictly positive square $\MP^{Z}$-integrable $C_{i}:\RRR^{d}\rightarrow\RRR$ for $i\in\{1,\ldots,r\}$,
\item 
$\Lambda_{il}:\RRR^{m}\times\RRR^{d}\rightarrow\RRR$ Borel measurable with $\Lambda_{il}(\cdot,z)$ affine linear for $z\in\RRR^{d}$ ($i\in\{1,\ldots,r\}$, $l\in\{1,\ldots,s_i\}$),
\item $a^{i}_{l}\in\RRR$ for $i\in\{1,\dots,r\}, l\in\{1,\dots,s_{i}\}$,
%\item $\Lambda_{i}, L^{i}_{l}: \RRR^{d}\rightarrow\RRR$ linear for $i\in\{1,\dots,r\}, l\in\{1,\dots,s_{i}\}$,
%\item $T:\RRR^{m}\rightarrow\RRR^{d}$ linear,
\item $I_{il} = ]0,\infty[$ or $I_{il} = [0,\infty[$ for $i\in\{1,\dots,r\}$ and $l\in\{1,\dots,s_{i}\}$,
\item The set
$$
\Big\{\bigcap\limits_{l=1}^{s_{i}}\big\{\Lambda_{il}(\theta, \cdot) + a^{i}_{l}\in I_{il}\}\mid i\in\{1,\ldots,r\}\big\}\Big\}
$$
is a partition of $\RRR^{d}$.
%\item {\color{blue}$\bigcap\limits_{l=1}^{s_{i}}\{{\color{red}\Lambda_{il}(\theta, \cdot)} + a^{i}_{l}\in I_{il}\}~\cap~\bigcap\limits_{l=1}^{s_{j}}\{{\color{red}\Lambda_{jl}(\theta, \cdot)} + a^{j}_{l}\in I_{jl}\} = \emptyset$} for $i\not= j$,
%\item {\color{blue}$\bigcup\limits_{i=1}^{r}\bigcap_{l=1}^{s_{i}}\{{\color{red}\Lambda_{il}(\theta, \cdot)} + a^{i}_{l}\in I_{il}\} = \RRR^{d}$.}
\end{itemize}
\end{enumerate}
Note that $G$ satisfying condition (PH) does not have continuity or convexity in $\theta$ in advance.
\par
In two stage mixed-integer programs the goal functions typically may be represented by (PH) if the random vector $Z$ has compact support (see \cite[p. 121]{EichhornRoemisch2007} together with \cite{Kraetschmer2022}). Within this special situation with compact $\Theta$ the authors in \cite{EichhornRoemisch2007} derive the same asymptotic distributions for the sequence \eqref{sequence} as in Theorem \ref{asymptoticDistributionRiskNeutral}. Their line of reasoning is based upon the corresponding representation (PH) of $G$, and it relies also on finiteness of the integrals $\overline{J}(\FFF^{\Theta}, C_{\FFF^{\Theta}},\delta)$. We may extend their result to general objectives $G$ having representation (PH), because under this condition the application of Theorem \ref{asymptoticDistributionRiskNeutral} is quite immediate.
%\par
%In this situation, it is sufficient to impose lower semicontinuity in $\theta$ and square integrability of the linear mappings 
%$\Lambda_{1},\ldots,\Lambda_{r}$. 
\begin{example}
\label{startingpoint}
Let $\Theta$ be compact, and let $G$ be lower semicontinuous in $\theta$ having representation (PH) with mappings $G^{i}, C_{i}$ and $\beta_{i}\in ]0,1]$ for $i\in\{1,\ldots,r\}$. Moreover, $G^{1}(\overline{\theta},\cdot),\ldots,G^{r}(\overline{\theta},\cdot)$ are supposed to be square $\MP^{Z}$-integrable for some $\overline{\theta}\in\Theta$. The notation $\Delta(\Theta)$ stands for the diameter of $\Theta$ w.r.t. the Euclidean metric on $\RRR^{m}$.
\par
According to Proposition 2.8 from \cite{Kraetschmer2022} requirement (A1) is fulfilled, and $$
\xi := \sum_{i=1}^{r}\big[\Delta(\Theta)^{\beta_{i}}~C_{i} + |G^{i}(\overline{\theta},\cdot)|\big]
$$ 
is square $\MP^{Z}$-integrable, satisfying (A2) and $\overline{J}(\FFF^{\Theta},\xi,1) < \infty$. Also explicit upper estimations of $\overline{J}(\FFF^{\Theta},\xi,1)$ are provided there. Hence all requirements of Theorem \ref{asymptoticDistributionRiskNeutral} are met.
\end{example}
\begin{remark}
\label{confidence interval I}
Let the assumptions of Theorem \ref{asymptoticDistributionRiskNeutral} be fulfilled. If optimization \eqref{optimization risk neutral} has a unique solution $\theta^{*}$ we may utilize the result of Theorem \ref{asymptoticDistributionRiskNeutral} to construct asymptotic confidence intervals on the optimal value $\EEE[G(\theta^{*},Z)]$ of \eqref{optimization risk neutral} in the following way. Choose for level $\beta\in ]0,1[$ real numbers $a,b > 0$ such that $\Phi_{N(0,1)}(a) + \Phi_{N(0,1)}(b) \geq 2 -\beta$ holds, where $\Phi_{N(0,1)}$ denotes the distribution function of the standard normal distribution. Then for every positive upper estimate 
$L\geq \sqrt{\vari\big(G(\theta^{*},Z_{1})\big)}$ we may define by 
$$
I^{n}_{a,b, L} := \Big[\inf_{\theta\in\Theta}\frac{1}{n}\sum_{j=1}^{n}G(\theta,Z_{j}) - \frac{L~a}{\sqrt{n}}~,~\inf_{\theta\in\Theta}\frac{1}{n}\sum_{j=1}^{n}G(\theta,Z_{j}) + \frac{L~b}{\sqrt{n}}\Big]
$$
a sequence $\big(I^{n}_{a,b, L}\big)_{n\in\NNN}$ of confidence intervals on $\EEE[G(\theta^{*},Z)]$ which fulfills 
$$
\liminf_{n\to\infty}\MP\big(\big\{\EEE[G(\theta^{*},Z)]\in I^{n}_{a,b, L}\big\}\big)\geq 1 - \beta.
$$
These confidence intervals may be considered as an alternative of the nonasymptotic confidence intervals which may be built directly on the upper estimates for the deviation probabilities \eqref{deviation beidseitig} 
%\begin{equation*}
%\MP\Big(\Big\{\big|\inf_{\theta\in\Theta}\frac{1}{n}\sum_{i=1}^{n}G(\theta,Z_{j}) - \inf_{\theta\in\Theta}\EEE[G(\theta,Z)]\big|\geq\varepsilon\Big\}\Big)\quad(n\in\NNN,\varepsilon > 0)
%\end{equation*}
derived in Theorem 2.2 from \cite{Kraetschmer2022}. It should be emphasized that these both ways to find confidence intervals do not require in advance path properties of the objective $G$ like continuity or convexity. In particular these methods may be used in the case that objectives have representation (PH), e.g. in two stage mixed-integer programs. 
\par
In \cite{GuiguesEtAl2017} the authors develop nonasymptotic confidence intervals which are based on specific separate upper estimates for the deviation probabilities \eqref{deviation unten} and \eqref{deviation oben}.
%\begin{align*}
%&
%\MP\Big(\Big\{\inf\limits_{\theta\in\Theta}~\frac{1}{n}~\sum_{j=1}^{n}G(\theta,Z_{j}) - \inf\limits_{\theta\in\Theta}\EEE\big[G(\theta,Z_{1})\big]\leq -\varepsilon/\sqrt{n}\Big\}\Big)\\
%&
%\MP\Big(\Big\{\inf\limits_{\theta\in\Theta}~\frac{1}{n}~\sum_{j=1}^{n}G(\theta,Z_{j}) - \inf\limits_{\theta\in\Theta}\EEE\big[G(\theta,Z_{1})\big]\geq\varepsilon/\sqrt{n}\Big\}\Big)
%\end{align*}
%for small  positive $\varepsilon$. They assume $G$ to be convex in $\theta$ such that the goal function of \eqref{optimization risk neutral} is differentiable, and they also impose conditions on the tail behaviour of the random variables $G(\theta,Z) - \EEE[G(\theta,Z)]$ as well as the random maximal distances between the derivatives of the goal function of \eqref{optimization risk neutral} and the subgradients of the goal functions in \eqref{SAAriskneutral}. 
The special feature of their suggestion is that the confidence intervals are independent of the dimension of the parameters (see \cite[Discussion 2.1.3, (3)]{GuiguesEtAl2017}). However, $G$ has to be convex in the parameter.
\end{remark}
%%%%%%%%%%%%%%%%%%%%%%%%%%%%%%%%%%%%%%%%%%%%%%%%%%%%%%%%%%%%%%%%%%%%%%%%%%%%%%%%%%%%%%%%%%%%%%%%%%%%%%%%%%%%%%%%%%%%%%%%%%%%%%%%%%%%%%%%%%%%%%%%%%%%%%%%%%%%%%%%%%%%%%%%%%%%%%%%%%%%%%%%%%%%%%%%%%%%%%%%%%%%%%%%%%%%%%%%%%%%%%%%%%%%%%%%%%%%%%%%%%%%%%%%%%%%%%%%%%%%%%%%%%%%%%%%%
\section{First order asymptotics under absolute semideviations}
\label{FOA absolute semideviations}
Let $L^1(\Omega,\cF,\pr)$ denote the usual $L^{1}$-space on $\OFP$, where we tacitely identify random variables which are different on $\MP$-null sets only. 
%The space $L^{1}\OFP$ is endowed with the usual $L^{1}$-norm $\|\cdot\|_{1}$.
\par
We want to study the risk averse stochastic program (\ref{optimization risk averse}), where  
in the objective the functional $\rho$ is an \textit{absolute semideviation}. This means that for $a\in ]0,1]$ the functional $\rho = \rho_{1,a}$ is defined as follows
$$
\rho_{1,a}:L^{1}\OFP\rightarrow\RRR,~X\mapsto\EEE[X] + a~ \EEE\big[\big(X - \EEE[X]\big)^{+}\big].
$$
It is well-known that absolute semideviations are increasing w.r.t. the increasing convex order (cf. e.g. \cite[Theorem 6.51 along with Example 6.23 and Proposition 6.8]{ShapiroEtAl}). They are also distribution-invariant so that we may define the associated functional $\cR_{\rho_{1,a}}$ on the set of distributions functions of random variables with first absolute moments. The aim of this section is the optimization problem 
\begin{equation}
\label{optimization semideviation}
\inf_{\theta\in\Theta}\cR_{\rho_{1,a}}\big(F_{\theta}\big),
\end{equation}
where $F_{\theta}$ stands for the distribution function of $G(\theta,Z)$ for 
$\theta\in\Theta$. The set of minimizers of this problem will be denoted by $\argmin\cR_{\rho_{1,a}}$. 

Introducing the notation 
\begin{equation}
\label{Hilfsgoals}
G_{1}:\Theta\times\RRR^{d}\rightarrow\RRR,~(\theta,z)\mapsto \big(G(\theta,z) - \EEE[G(\theta,Z_{1})]\big)^{+}, 
\end{equation}
we may describe this optimization also in the following way
\begin{equation}
\label{optimization general III}
\inf_{\theta\in\Theta}\cR_{\rho_{1,a}}\big(F_{\theta}\big) = 
\inf_{\theta\in\Theta}\left\{\EEE[G(\theta,Z_{1})] + a~\EEE[G_{1}(\theta,Z_{1})]\right\}
\end{equation}
The stochastic objective of the approximative problem according to the SAA method has the following representation.
\begin{equation}
\label{SAA upper semideviations}
%\inf_{\theta\in\Theta}
\cR_{\rho_{1,a}}\big(\hat{F}_{n,\theta}\big) = 
%\inf_{\theta\in\Theta}
\frac{1}{n}\sum_{j=1}^{n}G(\theta,Z_{j}) + a~\frac{1}{n}\sum_{j=1}^{n}\big[G(\theta,Z_{j}) - \frac{1}{n}\sum_{i=1}^{n}G(\theta,Z_{i})\big]^{+}
%\inf_{\theta\in\Theta}\left(\frac{1}{n}\sum_{j=1}^{n}G(\theta,Z_{j}) + \Big[\Big(\frac{1}{n}\sum_{j=1}^{n}\big[G(\theta,Z_{j}) - \frac{1}{n}\sum_{i=1}^{n}G(\theta,Z_{i})\big]^{+}\Big)^{p}\Big]^{1/p}\right)
\end{equation}

\smallskip

Let (A1), (A2) be fulfilled, and let $\overline{J}(\FFF^{\Theta}, \xi,1) < \infty$, where $\xi$ is from (A2). Then under some minor additional regularity conditions on $G$ we already know that
$$
n^{a}~\Big[\inf\limits_{\theta\in\Theta}\cR_{\rho_{1,a}}\big(\hat{F}_{n,\theta}\big) - \inf\limits_{\theta\in\Theta}\cR_{\rho_{1,a}}\big(F_{\theta}\big)\Big]\underset{n\to\infty}{\to} 0\quad\mbox{in probability}
$$ 
holds for $a\in ]0,1/2[$ (see \cite[Theorem 3.5]{Kraetschmer2022}). Moreover, also by Theorem 3.5 from \cite{Kraetschmer2022}, the sequence
\begin{equation}
\label{sequence object}
\Big(\sqrt{n}~\Big[\inf\limits_{\theta\in\Theta}\cR_{\rho_{1,a}}\big(\hat{F}_{n,\theta}\big) - \inf\limits_{\theta\in\Theta}\cR_{\rho_{1,a}}\big(F_{\theta}\big)\Big]\Big)_{n\in\NNN}
\end{equation}
is uniformly tight. The aim of this section is to find asymptotic distributions of this sequence. The starting point is the following observation from \eqref{optimization general III}
\begin{equation}
\label{startingobservation}
\rho_{1,a}\big(G(\theta,Z_{1})\big) = \EEE[\widehat{G}_{1,a}(\theta,Z_{1})] \quad\mbox{for}~\theta\in\Theta, 
\end{equation}
where, setting $\overline{F}_{\theta} := 1- F_{\theta}$, the mapping $\widehat{G}_{1,a}:\Theta\times\RRR^{d}\rightarrow\RRR$ is defined by
\begin{align*}
\widehat{G}_{1,a}(\theta,z)%\\ 
%&
= \big[1- a\overline{F}_{\theta}\big(\EEE[G(\theta,Z_{1})]\big)\big]~G(\theta,z) + a \EEE[G(\theta,Z_{1})]\overline{F}_{\theta}\big(\EEE[G(\theta,Z_{1})]\big) + a G_{1}(\theta,z).
\end{align*}
Then the key is to show that sequence \eqref{sequence object} has the same asymptotic distribution as the following sequence
\begin{equation}
\label{SAA upper semideviations auxiliary}
\Big(\sqrt{n}\Big[\inf_{\theta\in\Theta}\frac{1}{n}\sum_{j=1}^{n}\widehat{G}_{1,a}(\theta,Z_{j}) - \inf_{\theta\in\Theta}\EEE\big[\widehat{G}_{1,a}(\theta,Z_{1})\big]\Big]\Big)_{n\in\NNN}
\end{equation}
if one of these sequences converges weakly. In this case we may apply Theorem \ref{asymptoticDistributionRiskNeutral} to the 
sequence \eqref{SAA upper semideviations auxiliary} to derive the asymptotic distribution of sequence \eqref{sequence object}.
\smallskip 

The investigations are based on the following mild continuity requirement for the objective $G$.
\begin{itemize}
\item [(A3)] $G(\theta_{n},\cdot)\to G(\theta,\cdot)$ in $\MP^{Z}$-probability whenever $\theta_{n}\to\theta$ w.r.t. the Euclidean metric.
\end{itemize}
Requirement (A3) implies some useful convergence property in continuity points of the distribution functions $F_{\theta}$.
\begin{lemma}
\label{continuity convergence}
Let $t\in\RRR$ be a continuity point of $F_{\theta}$ for some $\theta\in\Theta$, and let 
(A3) be fulfilled. If $\theta_{n}\to\theta$ and $t_{n}\to t$, then $F_{\theta_{n}}(t_{n})\to F_{\theta}(t)$. 
\end{lemma}
\begin{proof}
Fix any $\varepsilon > 0$. Then there is some $n_{0}\in\NNN$ such that $|t_{n} - t|\leq\varepsilon/2$ for $n\in\NNN$ with $n\geq n_{0}$. Furthermore
\begin{align*}
&
F_{\theta_{n}}(t_{n})\leq F_{\theta_{n}}(t + \varepsilon/2)\leq F_{\theta}(t + \varepsilon) + \MP\big(\big\{|G(\theta_{n},Z_{1}) - G(\theta,Z_{1})| > \varepsilon/2\big\}\big)\\
&
F_{\theta_{n}}(t_{n})\geq F_{\theta_{n}}(t - \varepsilon/2)\geq  \MP\big(\big\{G(\theta,Z_{1}) \leq t-\varepsilon,|G(\theta_{n},Z_{1}) - G(\theta,Z_{1})|\leq \varepsilon/2\big\}\big)\\
&\qquad\quad
\geq 
F_{\theta}(t - \varepsilon) - \MP\big(\big\{|G(\theta_{n},Z_{1}) - G(\theta,Z_{1})| > \varepsilon/2\big\}\big)
\end{align*}
for $n\in\NNN$ with $n\geq n_{0}$. Hence by (A3)
\begin{align*}
\liminf_{n\to\infty}F_{\theta_{n}}(t_{n})\geq F_{\theta}(t-\varepsilon)\quad\mbox{and}\quad\limsup_{n\to\infty}F_{\theta_{n}}(t_{n})\leq F_{\theta}(t+\varepsilon).
\end{align*}
The statement may be derived immediately from continuity of $F_{\theta}$ at $t$.
\end{proof}
The following result is the providing step to show that the sequences \eqref{sequence object} and \eqref{SAA upper semideviations auxiliary} have identical asymptotic distributions if \eqref{SAA upper semideviations auxiliary} converges weakly.
\begin{proposition}
\label{missing link I}
Let (A1) - (A3) be fulfilled, where the mapping $\xi$ from (A2) is square $\MP^{Z}$-integrable.  
Furthermore the distribution function $F_{\theta}$ is continuous at $\EEE[G(\theta,Z_{1})]$ for $\theta\in\Theta$. Using notation \eqref{Entropie-Integral I}, if $\overline{J}(\FFF^{\Theta},\xi,1)$ is finite, then
$$
\lim_{n\to\infty}\MP^{*}\Big(\Big\{\big|\sqrt{n}\inf_{\theta\in\Theta}\cR_{\rho_{1,a}}\big(\hat{F}_{n,\theta}\big) - \sqrt{n}\inf_{\theta\in\Theta}\frac{1}{n}\sum_{j=1}^{n}\widehat{G}_{1,a}(\theta,Z_{j})\big| > \varepsilon\Big\}\Big) = 0\quad\mbox{for}~\varepsilon > 0,
$$
where $\MP^{*}$ stands for the outer probability of $\MP$. 
%is a sequence of random variables on $\OFP$ which converges to $0$ in probability.
\end{proposition}
The proof of Proposition \ref{missing link I} may be found in Subsection \ref{Beweis missing link I}.
\smallskip

Now, combining Proposition \ref{missing link I} with Theorem \ref{asymptoticDistributionRiskNeutral}, we may derive our result on the first order asymptotics of the SAA under absolute semideviations.
\begin{theorem}
\label{first asymptotics absolute semideviations}
Let $a\in ]0,1]$, let (A1) - (A3) be fulfilled, where the mapping $\xi$ from condition 
(A2) is $\MP^{Z}$-integrable of order $4$. Furthermore the distribution function $F_{\theta}$ is continuous at $\EEE[G(\theta,Z_{1})]\}$ for $\theta\in\Theta$. Using notation \eqref{Entropie-Integral I}, if $\overline{J}(\FFF^{\Theta},\xi,1)$ is finite, then the following statements hold. 
\begin{itemize}
\item [1)] $\sqrt{n}\big[\inf\limits_{\theta\in\Theta}\cR_{\rho_{1,a}}\big(\hat{F}_{n,\theta}\big) - \inf\limits_{\theta\in\Theta}\cR_{1,a}\big(F_{\theta}\big)\big]$ is a random variable on $\OFP$ for $n\in\NNN$.
\item [2)] The set $\argmin\cR_{\rho_{1,a}}$ is nonvoid and compact.
\item [3)]
There exists some centered Gaussian process $\widehat{\mathfrak{G}} = (\widehat{\mathfrak{G}}_{\theta})_{\theta\in\Theta}$ 
such that 
the sequence 
$$
\Big(\sqrt{n}\big[\inf_{\theta\in\Theta}\cR_{\rho_{1,a}}\big(\hat{F}_{n,\theta}\big) - \inf_{\theta\in\Theta}\cR_{1,a}\big(F_{\theta}\big)\big]\Big)_{n\in\NNN}
$$
converges weakly to 
$
\sup\limits_{\theta\in\argmin\cR_{\rho_{1,a}}}\widehat{\mathfrak{G}}_{\theta}.
$
This Gaussian process has uniformly continuous paths w.r.t. Euclidean metric and satisfies 
$$
\EEE\big[\widehat{\mathfrak{G}}_{\theta}\cdot \widehat{\mathfrak{G}}_{\vartheta}\big] = 
\covi
\big(\widehat{G}_{1,a}(\theta,Z_{1}), \widehat{G}_{1,a}(\vartheta,Z_{1})\big)\quad\mbox{for}~\theta, \vartheta\in\Theta.
$$
%for $\theta,\vartheta\in\Theta$.
%\par
In particular, if the optimization problem \eqref{optimization general III} has a unique solution $\theta^{*}$, then under the given assumptions 
%of Theorem \ref{first asymptotics absolute semideviations} 
we have weak convergence to some centered normally distributed random variable with variance $\vari\big(\widehat{G}_{1,a}(\theta^{*},Z_{1})\big)$. 
\end{itemize}
\end{theorem}
\begin{proof}
Firstly, by (A2) the genuine optimization problem and its SAA counterpart have finite optimal values. Furthermore 
$$
\big\{\omega\in\Omega\mid \inf_{\theta\in\Theta}\cR_{\rho_{1,a}}\big(\hat{F}_{n,\theta}\big)_{|\omega} < t\big\} = \textrm{Pr}_{\Omega}\Big(\big\{(\theta,\omega)\in\Theta\times\Omega\mid \cR_{\rho_{1,a}}\big(\hat{F}_{n,\theta}\big)_{|\omega} < t\big\}\Big)\quad\mbox{for}~t\in\RRR,
$$
where $\textrm{Pr}_{\Omega}$ denotes the standard projection from $\Theta\times\Omega$ onto $\Omega$. In view of \eqref{SAA upper semideviations} the set $\big\{(\theta,\omega)\in\Theta\times\Omega\mid \cR_{\rho_{1,a}}\big(\hat{F}_{n,\theta}\big)_{|\omega} < t\big\}$ belongs to $\cB(\Theta)\otimes\cF$ for every $t\in\RRR$ due to (A1). Since $\Theta$ is a Polish space, and since $\OFP$ is complete, we end up with $\big\{\omega\in\Omega\mid \inf_{\theta\in\Theta}\cR_{\rho_{1,a}}\big(\hat{F}_{n,\theta}\big)_{|\omega} < t\big\}\in\cF$ (see \cite[Proposition 8.4.4]{Cohn1980}). This shows statement 1).
\smallskip

Secondly, $\EEE[\widehat{G}_{1,a}(\theta,Z_{1})] = \rho_{1,a}\big(G(\theta,Z_{1})\big)$ holds for any $\theta\in\Theta$. Hence in view of Proposition \ref{missing link I} along with a version of Slutsky's lemma (\cite[Lemma 1.10.2]{vanderVaartWellner1996}) it remains to show that the objective $\widehat{G}_{1,a}$ meets the requirements of Proposition \ref{MessbarkeitLoesbarkeit} and Theorem \ref{asymptoticDistributionRiskNeutral}.
\medskip

In Lemma \ref{basic observations} below it will be shown that the 
%Since $\FFF^{\Theta}$ is dominated by some $\MP^{Z}$-integrable mapping due to (A 2), assumption (A 3) allows us to apply Vitalis' theorem (see \cite[Proposition 21.4]{Bauer2001}) to the 
mapping $\theta\mapsto\EEE[G(\theta,Z_{1})]$ on $\Theta$ is continuous under (A1) - (A3). 
This implies by Lemma \ref{continuity convergence} the continuity of the mappings $\theta\mapsto \overline{F}_{\theta}\big(\EEE[G(\theta,Z_{1})]\big)$ and $\theta\mapsto \EEE[G(\theta,Z_{1})]~\overline{F}_{\theta}\big(\EEE[G(\theta,Z_{1})]\big)$ on $\Theta$ because each $F_{\theta}$ is assumed to be continuous at $\EEE[G(\theta,Z_{1})]$. Since in addition $\big[1 - a \overline{F}_{\theta}\big(\EEE[G(\theta,Z_{1})]\big)\big]$ is nonnegative for $\theta$ and $(\cdot)^{+}$ is nondecreasing, the objective $\widehat{G}_{1,a}$ is lower semicontinuous in $\theta$ due to (A1). Property (A1) also implies that $\widehat{G}_{1,a}$ is measurable w.r.t. $\cB(\Theta)\otimes\cB(\RRR^{d})$.
\smallskip

For a nonnvoid bounded subset $\cK$ of $\RRR$ we denote by $\overline{\FFF}^{\cK}$ the set of all constant mappings on $\RRR^{d}$ with outcomes in $\cK$. We shall use notation $N(\varepsilon,\cK,|\cdot|)$ for the minimal number to cover $\cK$ by intervals of the form $[a - \varepsilon,a+ \varepsilon]$, where $\varepsilon > 0$ and $a\in\cK$. Then
\begin{align*}
N\big(\varepsilon,\overline{\FFF}^{\cK},L^{2}(\MQ)\big)\leq N(\varepsilon,\cK,|\cdot|)\leq \frac{2 (\sup\cK - \inf\cK)}{\varepsilon}\quad\mbox{for}~\MQ\in \cM_{\textrm{\tiny fin}},~\varepsilon > 0.
\end{align*}
In particular, using notation \eqref{Entropie-Integral I}, we have $\overline{J}(\overline{\FFF}^{\cK},c,1) < \infty$ for every positive, constant envelope $c$ of $\overline{\FFF}^{\cK}$. So we have finiteness of the terms $\overline{J}(\overline{\FFF}^{\cK_{1}},c_{1},1)$ and $\overline{J}(\overline{\FFF}^{\cK_{2}},c_{2},1)$, where  
$\cK_{1} := \big\{1 - a \overline{F}_{\theta}\big(\EEE[G(\theta,Z_{1})]\big)\mid \theta\in\Theta\big\}$, $\cK_{2} := \big\{a ~\EEE[G(\theta,Z_{1})]~\overline{F}_{\theta}\big(\EEE[G(\theta,Z_{1})]\big)\mid \theta\in\Theta\big\}$ and $c_{1} := 1$, $c_{2} :=\EEE[\xi(Z_{1})]$. Moreover, since $\xi$ is $\MP^{Z}$-integrable of order $4$, we obtain by Lemma 3.4 from \cite{Kraetschmer2022} that there exists some square $\MP^{Z}$-integrable positive envelope $\xi_{1,a}$ of $\overline{\FFF}^{\Theta,1}_{a} := \{a~G_{1}(\theta,\cdot)\mid \theta\in\Theta\}$ with finite $\overline{J}(\overline{\FFF}_{a}^{\Theta,1},\xi_{1,a},1)$. Now, invoking Lemma 9.14 and Theorem 9.15 both from \cite{Kosorok2008}, and recalling $\overline{J}(\FFF^{\Theta},\xi,1) < \infty$, the mapping $\xi + \EEE[\xi(Z_{1})] + \xi_{1,a}$ is a square $\MP^{Z}$-integrable positive envelope of $\widehat{\FFF}_{1,a} := \{\widehat{G}_{1,a}(\theta,\cdot)\mid\theta\in\Theta\}$ with finite $\overline{J}\big(\widehat{\FFF}_{1,a},\xi + \EEE[\xi(Z_{1})]+\xi_{1,a},1\big)$.
\medskip

Now, we are ready to apply Proposition \ref{MessbarkeitLoesbarkeit} and Theorem \ref{asymptoticDistributionRiskNeutral} to the objective $\widehat{G}_{1,a}$. Statement 2) follows immediately from Proposition \ref{MessbarkeitLoesbarkeit}. 
According to Theorem \ref{asymptoticDistributionRiskNeutral} we may find some centered Gaussian process $\widehat{\mathfrak{G}} = (\widehat{\mathfrak{G}}_{\theta})_{\theta\in\Theta}$ with the same covariances as in statement 3) such that 
the sequence \eqref{SAA upper semideviations auxiliary}
%$$
%\Big(\sqrt{n}\Big[\inf_{\theta\in\Theta}\frac{1}{n}\sum_{j=1}^{n}\widehat{G}_{1,a}(\theta,Z_{j}) - \inf_{\theta\in\Theta}\EEE[\widehat{G}_{1,a}(\theta,Z_{1})]
%\cR_{1,a}\big(F_{\theta}\big)
%\Big]\Big)_{n\in\NNN}
%$$
converges weakly to 
$
\sup\limits_{\theta\in\argmin\cR_{\rho_{1,a}}}\widehat{\mathfrak{G}}_{\theta}.
$
If the optimization problem \eqref{optimization general III} has a unique solution $\theta^{*}$, the limit is a centered normally distributed random variable with variance $\vari\big(\widehat{G}_{1,a}(\theta^{*},Z_{1})\big)$. 
\smallskip

It is also known from Theorem \ref{asymptoticDistributionRiskNeutral} that the process 
$\widehat{\frak{G}}$ has uniformly continuous paths w.r.t. the semimetric
$\overline{d}_{\Theta,1,a}$ defined by 
$\overline{d}_{\Theta,1,a}(\theta,\vartheta) = \sqrt{\vari\big(\widehat{G}_{1,a}(\theta,Z_{1}) - \widehat{G}_{1,a}(\vartheta,Z_{1})\big)}$.
By continuity of the three mappings 
$\theta\mapsto\EEE[G(\theta,Z_{1})]$, $\theta\mapsto \overline{F}_{\theta}\big(\EEE[G(\theta,Z_{1})]\big)$ as well as $\theta\mapsto \EEE[G(\theta,Z_{1})]~\overline{F}_{\theta}\big(\EEE[G(\theta,Z_{1})]\big)$ on $\Theta$ along with (A3) we may conclude convergence $\widehat{G}_{1,a}(\theta_{n},Z_{1})\to \widehat{G}_{1,a}(\theta,Z_{1})$ in probability for $\theta_{n}\to \theta$ w.r.t. the Euclidean metric.
Moreover in this case, the sequence $\big(\widehat{G}_{1,a}(\theta_{n},Z_{1})\big)_{n\in\NNN}$ is dominated by the square integrable random variable $\xi(Z_{1}) + \EEE[\xi(Z_{1})] + \xi_{1,a}(Z_{1})$. 
%We have $\overline{d}_{\Theta,1,a}(\theta,\vartheta)\leq (1 + a) \sqrt{\EEE[|G(\theta,Z_{1}) -G(\vartheta,Z_{1})|^{2}]}$ for $\theta,\vartheta\in\Theta$. Moreover, the set $\{G(\theta,Z_{1})\mid \theta\}$ of random variables is dominated by the integrable random variable $\xi(Z_{1})^{2}$. 
Then by Vitalis' theorem (see \cite[Proposition 21.4]{Bauer2001}) we end up with 
$\sqrt{\EEE[|\widehat{G}_{1,a}(\theta_{n},Z_{1}) -\widehat{G}_{1,a}(\theta,Z_{1})|^{2}]}\to 0$, and thus $\overline{d}_{\Theta,1,a}(\theta_{n},\theta)\to 0$. Therefore $\widehat{\frak{G}}$ has also continuous paths w.r.t. the Euclidean metric, and they are even uniformly continuous due to compactness of $\Theta$.
%\smallskip
%
%In view of Proposition \ref{missing link I} statement 3) follows immediately from Slutsky's lemma. The proof is complete.
\end{proof}
\begin{remark}
Let $G(\theta,\cdot)$ be an $\MP^{Z}$-integrable random variable with distribution function $F_{\theta}$ being continuous at $\EEE[G(\theta,Z_{1})]$ for $\theta\in\Theta$. If $G$ either satisfies condition (H) or has representation (PH), then Example \ref{Hoelder-Bedingung auxiliary} or Example \ref{startingpoint} respectively provide constructions for proper positive envelopes $\xi$ of $\FFF^{\Theta}$ to apply Theorem \ref{first asymptotics absolute semideviations}.
\end{remark}
\begin{remark}
\label{confidence interval II}
Theorem \ref{first asymptotics absolute semideviations} offers a method to construct asymptotic confidence intervals on the optimal value of optimization \eqref{optimization semideviation} in the case that there is some unique solution $\theta^{*}$. The way is exactly the same one as we may find asymptotic confidence intervals on the optimal value of  \eqref{optimization risk neutral} according to Remark \ref{confidence interval I}. The only difference is the choice of the involved positive estimate $L$ which should satisfy $L\geq \sqrt{\vari\big(\widehat{G}_{1,a}(\theta^{*},Z_{1})\big)}$.
\par 
These asymptotic confidence intervals might be compared with nonasymptotic confidence intervals which are based on upper estimates for the deviation probabilities
$$
\MP\Big(\Big\{\big|\inf_{\theta\in\Theta}\cR_{\rho_{1,a}}\big(\hat{F}_{n,\theta}\big) - \inf_{\theta\in\Theta}\cR_{\rho_{1,a}}\big(F_{\theta}\big)\big|\geq\varepsilon\Big\}\Big)\quad(n\in\NNN,\varepsilon > 0)
$$
from Theorem 3.5 in \cite{Kraetschmer2022}. By Example \ref{startingpoint} both methods may be applied for objectives with representation (PH), e.g. in two stage mixed-integer programs.
\end{remark}
%%%%%%%%%%%%%%%%%%%%%%%%%%%%%%%%%%%%%%%%%%%%%%%%%%%%%%%%%%%%%%%%%%%%%%%%%%%%%%%%%%%%%%%%%%%%%%%%%%%%%%%%%%%%%%%%%%%%%%%%%%%%%%%%%%%%%%%%%%%%%%%%%%%%%%%%%%%%%%%%%%%%%%%%%%%%%%%%%%%%%%%%%%%%%%%%%%%%%%%%%%%%%%%%%%%%%%%%%%%%%%%%%%%%%%%%%%%%%%%%%%%%%%%%%%%%%%%%%%%%%%%%%%%%%%%%%
\section{First order asymptotics under divergence risk measures}
\label{generalized divergence risk measures}
Let 
%the probability space $(\Omega,\cF, \pr)$ from the introduction be atomless, and 
denote by $L^p:=L^p(\Omega,\cF,\pr)$ the usual $L^{p}$-space on $\OFP$ ($p\in [0,\infty]$), where we tacitely identify random variables which are differ on $\MP$-null sets only. 
%Throughout this paper, for any distribution function $F$ we shall use notation $\oF := 1 - F$, and $\Flinks, \Frechts$ to denote respectively the left-continuous and right-continuous quantile function of $F$.
\par
We want to study the risk averse stochastic program (\ref{optimization risk averse}), where  %in terms of functionals which are nondecreasing w.r.t. the increasing convex order. More precisely, for a linear subspace $\mathcal{X}$ of $L^{0}$, a functional $\rho: \mathcal{X}\rightarrow\RRR$ which is nondecreasing w.r.t. the increasing convex order, and a goal function 
%$G:\Theta\times\RRR^{d}\rightarrow\RRR$, we shall consider the 
%following optimization problem
%\begin{equation}
%\label{optimization general}
%\inf_{\theta\in\Theta}\rho\big(G(\theta,Z)\big).
%\end{equation}
%Here $\Theta$ denotes a compact subset of $\RRR^{m}$, whereas $Z$ stands for a $d$-dimensional random vector on $\OFP$ with distribution $\MP^{Z}$, and $G$ should fulfill some suitable conditions so that $G(\theta,Z)$ is an integrable random variable. 
we shall focus on $\rho$ being
a divergence measure. For introduction, let us consider a lower semicontinuous convex mapping $\Phi: [0,\infty[\rightarrow [0,\infty]$ satisfying 
$\Phi(0) < \infty$, $\Phi(x_{0}) < \infty$ for some $x_{0} > 1,$ $\inf_{x\geq 0}\Phi(x) = 0,$ and the growth condition
$\lim_{x\to\infty}\frac{\Phi(x)}{x} = \infty.$ Its Fenchel-Legendre transform
$$
\Phi^{*}:\R\rightarrow \R\cup\{\infty\},~y\mapsto\sup_{x\geq 0}~\big(xy - \Phi(x)\big)
$$
is a finite nondecreasing convex function whose restriction $\Phi^{*}\bigr |_{[0,\infty[}$ to 
$[0,\infty[$ is a finite Young function, i.e.  a continuous nondecreasing and unbounded real valued mapping with $\Phi^{*}(0) = 0$ (cf. \cite[Lemma A.1]{BelomestnyKraetschmer2016}). Note also that the right-sided derivative $\Phi^{*'}_{+}$ of $\Phi^{*}$ is nonnegative and nondecreasing. We shall use  
$H^{\Phi^{*}}$ to denote the \textit{Orlicz heart} w.r.t. $\Phi^{*}\bigr |_{[0,\infty[}$ defined to mean the set of all random variables $X$ on $\OFP$ satisfying $\ex [\,\Phi^{*}(c|X|)\,]<\infty$ for all $c > 0$. Here we identify random variables which differ on $\MP$-null sets only.
\par
%$$
%H^{\Phi^{*}}:=H^{\Phi^{*}}(\Omega,\cF,\pr)=\big\{X\in L^0\,:\,\ex [\,\Phi^{*}(c|X|)\,]<\infty~
%\mbox{ for all $c>0$}\big\}.
%$$
The Orlicz heart is known to be a vector space enclosing all $\MP$-essentially bounded random variables. Moreover,  
%Banach space when endowed with the \textit{Luxemburg norm}
% $$
%    \|X\|_{\Phi^{*}} := \inf\left\{\lambda > 0\,:\,\ex [\,\Phi^{*}(|X|/\lambda)\,]\leq 1\right\}.
%$$
by Jensen's inequality all members of $H^{\Phi^{*}}$ are $\MP$-integrable.
%$\subseteq L^{1}.$ In addition, we may see that $L^{\infty}$ is a linear subspace of $H^{\Phi^{*}},$ which is dense w.r.t. $\|\cdot\|_{\Phi^{*}}$ (see Theorem 2.1.14 in \cite{EdgarSucheston1992}). 
For more on Orlicz hearts w.r.t. to Young functions the reader may consult \cite{EdgarSucheston1992}.
\medskip

We can define the following mapping
$$
\rho^{\Phi}(X)=\sup_{\overline{\MP}\in\cP_{\Phi}}\left(\ex_{\overline{\MP}}\left[X\right] - 
\ex\left[\Phi\left(\frac{d\overline{\MP}}{d\pr}\right)\right]\right)
$$
for all \(X\in H^{\Phi^{*}},\)  where $\cP_{\Phi},$ denotes the set of all probability measures $\overline{\MP}$ which are absolutely continuous w.r.t. $\pr$  such that $\Phi\left(\frac{d\overline{\MP}}{d\pr}\right)$ is $\pr-$integrable. Note that 
$\frac{d\overline{\MP}}{d\pr}~ X$ is $\pr-$integrable for every $\overline{\MP}\in \cP_{\Phi}$ and any 
$X\in H^{\Phi^{*}}$ due to Young's inequality. We shall call $\rho^{\Phi}$ the \textit{divergence risk measure w.r.t. $\Phi$}. 
%It obviously satisfies 
%$$
%\rho^{\Phi}(Y)\geq\rho^{\Phi}(X)\quad\mbox{if}~\EEE[(Y-x)^{+}]\geq\EEE[(X - x)^{+}]~\mbox{holds for}~X,Y\in H^{\Phi^{*}}~\mbox{and every}~x\in\RRR.
%$$
%By well-known characterizations of the increasing convex order (see e.g. \cite[Theorem 2.57]{FoellmerSchied2011}) this means that $\rho^{\Phi}$ is indeed nondecreasing w.r.t. the increasing convex order. 
\medskip

Ben-Tal and Teboulle (\cite{Ben-TalTeboulle1987}, 
\cite{Ben-TalTeboulle2007}) discovered another more convenient representation. 
%We try to provide a representation which might be more convenient for calculations and for issues like optimal stopping. 
%For preparation, let $\textrm{dom}(\Phi)$ denote the effective domain of $\Phi$ which is nonvoid by assumption. Setting $x_{\Phi} \dot= \sup\textrm{dom}(\Phi)$ we also know $]0,x_{\Phi}[\subseteq\text{dom}(\Phi)$. We shall assume
%\begin{equation}
%\label{domain-Bedingung}
%x_{\Phi} > 1.
%\end{equation}
It reads as follows (see \cite{BelomestnyKraetschmer2016}).
\begin{theorem}
\label{optimized certainty equivalent}
The divergence risk measure $\rho^{\Phi}$ w.r.t. $\Phi$ satisfies the following representation
\begin{eqnarray*}
\rho^{\Phi}(X) 
= 
\inf_{x\in\R}\ex\left[\Phi^{*}(X + x) - x\right]
\quad\mbox{for all}~X\in H^{\Phi^{*}}.
\end{eqnarray*}
\end{theorem}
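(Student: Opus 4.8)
The plan is to read the statement as a Fenchel--Young duality between the density variable and the shift variable $x$. Writing $Y:=d\overline{\MP}/d\pr$, the set $\cP_{\Phi}$ corresponds bijectively to the nonnegative random variables $Y$ with $\ex[Y]=1$ and $\ex[\Phi(Y)]<\infty$, and the defining expression for $\rho^{\Phi}(X)$ becomes $\ex[XY]-\ex[\Phi(Y)]$. Hence it suffices to show
\[
\sup\big\{\ex[XY]-\ex[\Phi(Y)]\,:\,Y\ge 0,\ \ex[Y]=1\big\}
= \inf_{x\in\R}\ex\big[\Phi^{*}(X+x)-x\big],
\]
and I would establish the two inequalities separately. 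For ``$\le$'', fix an admissible $Y$ and $x\in\R$ and apply Young's inequality $Y(X+x)\le \Phi(Y)+\Phi^{*}(X+x)$ pointwise. Taking expectations and using $\ex[Y]=1$ gives $\ex[XY]-\ex[\Phi(Y)]\le \ex[\Phi^{*}(X+x)]-x$. Since the left-hand side is free of $x$ and the right-hand side free of $Y$, taking the supremum over $Y$ and the infimum over $x$ yields $\rho^{\Phi}(X)\le \inf_{x}\ex[\Phi^{*}(X+x)-x]$.

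For the reverse inequality ``$\ge$'', set $g(x):=\ex[\Phi^{*}(X+x)-x]$. First I would check that $g$ is real-valued (convexity and monotonicity of $\Phi^{*}$ together with $X\in H^{\Phi^{*}}$ give $\ex[\Phi^{*}(X+x)]<\infty$), convex, and coercive: the affine minorant $\Phi^{*}(v)\ge x_{0}v-\Phi(x_{0})$ coming from $\Phi(x_{0})<\infty$ for some $x_{0}>1$ forces $g(x)\ge (x_{0}-1)x+\mathrm{const}\to\infty$ as $x\to+\infty$, while $\Phi^{*}\ge -\Phi(0)$ forces $g(x)\ge -\Phi(0)-x\to\infty$ as $x\to-\infty$. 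Consequently $g$ attains its infimum at some $x^{*}$. The optimality condition $0\in\partial g(x^{*})$, after interchanging differentiation and expectation, reads $\ex[\Phi^{*\prime}_{-}(X+x^{*})]\le 1\le \ex[\Phi^{*\prime}_{+}(X+x^{*})]$, and a measurable selection then furnishes $Y^{*}(\omega)\in\partial\Phi^{*}(X(\omega)+x^{*})$ with $\ex[Y^{*}]=1$. Because $\Phi^{*\prime}\ge 0$ we have $Y^{*}\ge 0$, and equality holds in Young's inequality for conjugate subgradient pairs, i.e. $Y^{*}(X+x^{*})=\Phi(Y^{*})+\Phi^{*}(X+x^{*})$. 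This identity both shows $\ex[\Phi(Y^{*})]<\infty$, so that $Y^{*}$ induces a measure $\overline{\MP}^{*}\in\cP_{\Phi}$, and gives $\ex[XY^{*}]-\ex[\Phi(Y^{*})]=g(x^{*})=\inf_{x}g(x)$, which is the claimed reverse inequality.

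The routine parts are the two applications of Young's inequality and the coercivity estimate. I expect the genuine obstacles to be, first, the measurable selection of $Y^{*}\in\partial\Phi^{*}(X+x^{*})$ normalized to $\ex[Y^{*}]=1$ (one tunes the selection inside the subdifferential interval $[\Phi^{*\prime}_{-},\Phi^{*\prime}_{+}]$, exploiting the intermediate value supplied by the optimality inequalities), together with the justification of differentiating $g$ under the expectation by a dominated-convergence argument for the difference quotients; and second, verifying the admissibility and integrability $\ex[\Phi(Y^{*})]<\infty$ and $\ex[|XY^{*}|]<\infty$, which I would obtain from the Young equality, the integrability of $\Phi^{*}(X+x^{*})$, the lower bound $\Phi^{*}\ge -\Phi(0)$, and a truncation argument. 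If one prefers to sidestep the existence of a minimizer, the same conclusion follows by choosing a minimizing sequence $x_{n}$ with $g(x_{n})\to\inf_{x}g(x)$ and the corresponding near-optimal selections $Y_{n}$, and passing to the limit.
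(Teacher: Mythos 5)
Your proposal is correct, and it is worth noting that the paper itself contains no proof of Theorem \ref{optimized certainty equivalent} to compare against: the representation is simply quoted from the literature (``see \cite{BelomestnyKraetschmer2016}''; the result originates with Ben-Tal and Teboulle). Your argument is the classical self-contained Fenchel--Young duality proof: the inequality $\rho^{\Phi}(X)\leq\inf_{x\in\R}\ex[\Phi^{*}(X+x)-x]$ follows from pointwise Young's inequality exactly as you say, and the reverse inequality from exhibiting, at a minimizer $x^{*}$ of the coercive convex function $g$, a density $Y^{*}\in\partial\Phi^{*}(X+x^{*})$ with $\ex[Y^{*}]=1$. The two technical points you flag are genuine but fillable within your plan: (i) the normalization works because $\ex[\Phi^{*'}_{+}(X+x^{*})]<\infty$, which follows since the right derivative is dominated by the forward difference quotient $\Phi^{*}(X+x^{*}+1)-\Phi^{*}(X+x^{*})$, integrable as $X\in H^{\Phi^{*}}$; one may then take $Y^{*}=(1-\lambda)\Phi^{*'}_{-}(X+x^{*})+\lambda\Phi^{*'}_{+}(X+x^{*})$ with $\lambda\in[0,1]$ fixed by the intermediate value property, and this convex combination stays pointwise in the subdifferential interval; (ii) equality in Young's inequality requires $\Phi^{**}=\Phi$ on $[0,\infty[$, which holds precisely because the paper assumes $\Phi$ lower semicontinuous and convex (extended by $+\infty$ on the negative half-line), and the integrability needed to split the expectation of the Young equality follows from the gradient bound $Y^{*}(X+x^{*})\leq\Phi^{*}\big(2(X+x^{*})\big)-\Phi^{*}(X+x^{*})$ together with $\Phi^{*}\geq-\Phi(0)$, using the full strength of the Orlicz-heart assumption ($\ex[\Phi^{*}(c|X|)]<\infty$ for all $c>0$). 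What your route buys is a proof entirely inside the paper's standing assumptions, and as a by-product the attainment of the supremum defining $\rho^{\Phi}(X)$ by an explicit maximizing measure; what the paper's citation buys is only brevity.
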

The representation in Theorem \ref{optimized certainty equivalent} is also known as 
the \textit{optimized certainty equivalent w.r.t. $\Phi^{*}$}. As optimized certainty equivalent the divergence measure $\rho^{\Phi}$ may be seen directly to be nondecreasing w.r.t. the increasing convex order. Theorem \ref{optimized certainty equivalent} also shows that $\rho^{\Phi}$ is distribution-invariant. In particular, we may define the functional $\cR_{\rho^{\Phi}}$ associated with $\rho^{\Phi}$ on the set $\F_{\Phi^{*}}$ of all distribution functions of the random variables from $H^{\Phi^{*}}$. Note that $\OFP$ supports some random variable $U$ which is uniformly distributed on $]0,1[$ because $\OFP$ is assumed to be atomless. Then we obtain for any distribution function $F\in\F_{\Phi^{*}}$ with left-continuous quantile function $\Flinks$ 
%\begin{eqnarray}
%\label{erste Charakterisierung}
%F\in\F_{\Phi^{*}}
%\Leftrightarrow 
%\Flinks(U)\in H^{\Phi^{*}}
%\Leftrightarrow 
%\Flinks(U) + x \in H^{\Phi^{*}}~\mbox{for}~x\in\R
%\end{eqnarray}
%Hence we may define the functional $\cR_{\rho^{\Phi}}:\F_{\Phi^{*}}\rightarrow\R$ by
\begin{equation}
\label{Definition Divergenz Risikomass}
\cR_{\rho^{\Phi}}(F) ~=~ \rho^{\Phi}\big(\Flinks(U)\big) ~=~ \inf_{x\in\R}\left(\int_{0}^{1}\eins_{]0,1[}(u)~\Phi^{*}\big(\Flinks(u) + x\big)~du - x\right).
\end{equation}
For ease of reference we shall use notation $M_{F}$ to denote the set of all $x\in\RRR$ which solve the minimization in definition (\ref{Definition Divergenz Risikomass}) of 
$\cR_{\rho^{\Phi}}(F)$. In view of Proposition \ref{function of certainty equivalents basic properties} from the Appendix, each set $M_{F}$ is a nonvoid compact interval.
\medskip

Throughout this section we focus on the following specialization of optimization problem (\ref{optimization risk averse})
\begin{equation}
\label{optimization general II}
\inf_{\theta\in\Theta}\cR_{\rho^{\Phi}}\big(F_{\theta}\big),
\end{equation}
where $F_{\theta}$ stands for the distribution function of $G(\theta,Z)$ for 
$\theta\in\Theta$. The set of minimizers of the problem (\ref{optimization general II}) will be denoted by $\argmin\cR_{\rho^{\Phi}}$.
\par
The SAA (\ref{SAA risk averse}) of (\ref{optimization general II}) reads as follows. 
\begin{equation}
\label{optimization approximativ}
\inf_{\theta\in\Theta}\cR_{\rho^{\Phi}}\big(\hat{F}_{n,\theta}\big) 
= \inf_{\theta\in\Theta}\inf_{x\in\RRR}\Big(\frac{1}{n}\sum_{i=1}^{n}\Phi^{*}\big( G(\theta, Z_{i}) +x\big) - x\Big)\quad(n\in\NNN).
\end{equation}
We shall strengthen condition (A2) to the following property. 
\smallskip

\noindent
\begin{itemize}
\item [(A2')] There exists some positive envelope $\xi$ of $\FFF^{\Theta}$ satisfying $\xi(Z_{1})\in\cH^{\Phi^{*}}$.
\end{itemize}
\smallskip

\noindent
Note that (A2') together with (A1) implies that $G(\theta,Z_{1})$ belongs to $\cH^{\Phi^{*}}$ for every $\theta\in\Theta$ so that the genuine optimization problem \eqref{optimization general II} is well-defined.
\smallskip
According to Theorem 4.6 in \cite{Kraetschmer2022}
%, assumptions (A1), (A2') together  with some additional separability condition imply
$$
n^{a}~\Big[\inf\limits_{\theta\in\Theta}\cR_{\rho^{\Phi}}\big(\hat{F}_{n,\theta}\big) - \inf\limits_{\theta\in\Theta}\cR_{\rho^{\Phi}}\big(F_{\theta}\big)\Big]\underset{n\to\infty}{\to} 0\quad\mbox{in probability}\quad\mbox{for}~a\in ]0,1/2[,
$$ 
and the sequence
\begin{equation}
\label{sequence object2}
\Big(\sqrt{n}~\Big[\inf\limits_{\theta\in\Theta}\cR_{\rho^{\Phi}}\big(\hat{F}_{n,\theta}\big) - \inf\limits_{\theta\in\Theta}\cR_{\rho^{\Phi}}\big(F_{\theta}\big)\Big]\Big)_{n\in\NNN}
\end{equation}
is uniformly tight. The essential requirements are assumptions (A1), (A2') and the finiteness of $\overline{J}(\FFF^{\Theta}, \xi,1)$, where $\xi$ is from (A2'). In this section we want to derive asymptotic distributions of  the sequence \eqref{sequence object2}.
\smallskip

Representation \eqref{optimization approximativ} along with Theorem \ref{optimized certainty equivalent} suggests to apply Theorem \ref{asymptoticDistributionRiskNeutral} to the SAA of 
\begin{equation}
\label{auxiliary problem}
\inf_{(\theta,x)\in\Theta\times\RRR}\EEE\big[G_{\Phi}\big((\theta,x),Z_{1}\big)\big],
\end{equation}
where
\begin{equation}
\label{new objective}
G_{\Phi}: (\Theta\times\RRR)\times\RRR^{d}\rightarrow\RRR, \big((\theta,x),z\big)\mapsto \Phi^{*}\big(G(\theta,z) + x\big) - x.
\end{equation}
Unfortunately, the application is not immediate because the parameter space is not totally bounded w.r.t. the Euclidean metric on $\RRR^{m + 1}$. We already know that the solution set of the optimization problem \eqref{auxiliary problem} is compact under (A1), (A2') (see \cite[Lemma 5.8]{Kraetschmer2022}). Conditions (A1), (A2') also imply that the associated SAA problems have nonvoid compact solution sets. Unfortunately, they may depend on the realizations of the samples. In \cite{Kraetschmer2022} a kind of compactification was suggested which allows to restrict the parameter set of the random process $G_{\Phi}$ to suitable compact subsets. The idea is to show that with arbitrarily high probability we may find for large sample sizes events from $\mathcal{F}$ on which all solution sets of the SAA problems are contained in a common compact superset. The following result from \cite{Kraetschmer2022} gives a precise formulation of this idea. 
%So a kind of compactification is needed, provided by the following result.
For preparation consider any mapping $\xi$ as in (A2') and let us introduce for $\delta > 0$ and $n\in\NNN$ 
$$
A_{n,\delta}^{\xi} := \Big\{\frac{1}{n}\sum_{j=1}^{n}\xi(Z_{j})\leq \EEE[\xi(Z_{1})] + \delta,~\frac{1}{n}\sum_{j=1}^{n}\Phi^{*}\big(\xi(Z_{j})\big)\leq  \EEE\big[\Phi^{*}\big(\xi(Z_{1})\big)\big] + \delta\Big\}.
$$
Note that $A_{n,\delta}^{\xi}$ belongs to $\cF$, and $\MP(A_{n,\delta}^{\xi})\to 1$ for $n\to\infty$ due to the law of large numbers. The following result has been shown in \cite{Kraetschmer2022} (Theorem 5.7 with Lemma 5.8).
\begin{proposition}
\label{stochastic equicontinuity}
Let (A1), (A2') be fulfilled. Then the set of solutions of problem \eqref{auxiliary problem} is nonvoid and compact, and there always exists a solution of \eqref{optimization approximativ} for any $\omega$. Furthermore with mapping $\xi$ from (A2'), for every $\delta > 0$ and $n\in\NNN$ there is some $k_{\delta}\in\NNN$ such that
\begin{align*} 
\Big\{(\theta,x)\in\Theta\times\RRR\mid \EEE\big[G_{\Phi}\big((\theta,x),Z_{1}\big)\big] = \inf_{\theta\in\Theta\atop x\in\RRR}\EEE\big[G_{\Phi}\big((\theta,x),Z_{1}\big)\big]\Big\}\subseteq \Theta\times [-k_{\delta},k_{\delta}],
\end{align*}
and
\begin{align*}
\Big\{(\theta,x)\in\Theta\times\RRR\mid \frac{1}{n}\sum_{j=1}^{n}G_{\Phi}\big((\theta,x),Z_{j}(\omega)\big) = \inf_{\theta\in\Theta\atop x\in\RRR}\sum_{j=1}^{n}G_{\Phi}\big((\theta,x),Z_{j}(\omega)\big)\Big\}\subseteq \Theta\times [-k_{\delta},k_{\delta}]
\end{align*}
for $\delta > 0$, $n\in\NNN$ and $\omega\in A_{n,\delta}^{\xi}$.
\end{proposition}
Based upon Proposition \ref{stochastic equicontinuity} it will turn out that it is already sufficient to apply Theorem \ref{asymptoticDistributionRiskNeutral} to the SAA corresponding to the 
%restricted optimization problems
%$$
%\inf_{\theta\in\Theta\atop x\in [-k,k]}\EEE\big[G_{\Phi}\big((\theta,x),Z_{1}\big)\big]\quad(k\in\NNN).
%$$
function classes of the following type
\begin{equation}
\label{neue Funktionsklassen}
\FFF^{\Theta}_{\Phi,k} :=\big\{G_{\Phi}\big((\theta,x),\cdot\big)\mid (\theta,x)\in\Theta\times [-k,k]\big\}\quad(k\in\NNN).
\end{equation}
The finiteness of the terms $\overline{J}(\FFF^{\Theta}_{\Phi,k},C_{\FFF^{\Theta}_{\Phi,k}},1)$ is already guaranteed by finiteness of the terms $\overline{J}(\FFF^{\Theta},C_{\FFF^{\Theta}},1)$ associated with the genuine objective $G$. This will be the subject of the following result which has been proved in \cite{Kraetschmer2022} (Lemma 4.3).
\begin{lemma}
\label{relationship}
Let $\Phi^{*'}_{+}$ denote the right-sided derivative of $\Phi^{*}$. If $\xi$ is a square $\MP^{Z}$-integrable positive envelope of $\FFF^{\Theta}$, then for any $k\in\NNN$ the mapping
$$
C_{\FFF^{\Theta}_{\Phi,k}} := 2 \big[\Phi^{*'}_{+}\big(\xi + k\big) + 1] \sqrt{\xi^{2} + k^{2}}
$$
is a positive envelope of $\FFF^{\Theta}_{\Phi,k}$  satisfying $\overline{J}(\FFF^{\Theta}_{\Phi,k},C_{\FFF^{\Theta}_{\Phi,k}},1) < \infty$ if $\overline{J}(\FFF^{\Theta},\xi,1)$ is finite.
\end{lemma}
Next, if $G_{\Phi}\big((\theta,x),\cdot\big)$ is square $\MP^{Z}$-integrable for $(\theta,x)\in\Theta\times\RRR$, then we shall endow $\Theta\times \RRR$ with the semimetric $\overline{d}_{\Theta,\Phi}$ defined by 
$$
\overline{d}_{\Theta,\Phi}\big((\theta,x),(\vartheta,y)\big) := \sqrt{\vari\big[G_{\Phi}\big((\theta,x),Z_{1}\big) - G_{\Phi}\big((\vartheta,y),Z_{1}\big)\big]}.
$$
Now the application of Theorem \ref{asymptoticDistributionRiskNeutral} to the restricted optimization problems associated with the function classes $\FFF^{\Theta}_{\Phi,k}$ reads as follows.
\begin{proposition}
\label{restricted optimization}
Let (A1), (A2') be fulfilled. The Borel measurable mapping $\xi$ from (A2') is assumed to satisfy the property that the mapping $\xi_{k} := [\Phi^{*'}_{+}(\xi + k) + 1]\sqrt{\xi^{2} + k^{2}}$ is square $\MP^{Z}$-integrable for every $k\in\NNN$. 
If $\overline{J}(\FFF^{\Theta},\xi,1)$ is finite, then the following statements are true.
\begin{itemize}
\item [1)] $\inf\limits_{\theta\in\Theta\atop x\in [-k,k]}\frac{1}{n}\sum\limits_{j=1}^{n}G_{\Phi}\big((\theta,x),Z_{j}\big) - \inf\limits_{\theta\in\Theta\atop x\in [-k,k]}\EEE[G_{\Phi}\big((\theta,x),Z_{1}\big)]$ is a random variable on the probability space $\OFP$ for arbitrary $k,n\in\NNN$.
\item [2)] The sets 
$$
\mathcal{S}_{k} := \Big\{(\theta,x)\in\Theta\times [-k,k]\mid \EEE\big[G_{\Phi}\big((\theta,x),Z_{1}\big)\big] = \inf_{\theta\in\Theta\atop x\in [-k,k]}\EEE\big[G_{\Phi}\big((\theta,x),Z_{1}\big)\big]\Big\}~ (k\in\NNN)
$$
are nonvoid and compact.
\item [3)] For $k\in\NNN$ the semimetric $\overline{d}_{\Theta,\Phi}$ is totally bounded on $\Theta\times [-k,k]$, and there exists some centered Gaussian process $\mathfrak{G}^{k} = (\mathfrak{G}^{k}_{(\theta,x)})_{(\theta,x)\in\Theta\times [-k,k]}$ such that 
the sequence 
$$
\Big(\sqrt{n}\Big[\inf\limits_{\theta\in\Theta\atop x\in [-k,k]}\frac{1}{n}\sum\limits_{j=1}^{n}G_{\Phi}\big((\theta,x),Z_{j}\big) - \inf\limits_{\theta\in\Theta\atop x\in [-k,k]}\EEE[G_{\Phi}\big((\theta,x),Z_{1}\big)]\Big]\Big)_{n\in\NNN}
$$
converges weakly to 
$
\sup\limits_{(\theta,x)\in\mathcal{S}_{k}}\mathfrak{G}^{k}_{(\theta,x)}.
$
This Gaussian process has uniformly continuous paths w.r.t. $\overline{d}_{\Theta,\Phi}$ and satisfies 
$$
\EEE\big[\mathfrak{G}^{k}_{(\theta,x)}\cdot \mathfrak{G}^{k}_{(\vartheta,y)}\big] = 
\covi
\Big(G_{\Phi}\big((\theta,x),Z_{1}\big), G_{\Phi}\big((\vartheta,y),Z_{1})\Big)$$
for $\theta, \vartheta\in\Theta$ and $x, y\in [-k,k]$.
%for $\theta, \vartheta\in\Theta$ and $x, y\in \RRR$.
\end{itemize}
\end{proposition} 
\begin{proof}
Note that by (A1) the objective $G_{\Phi}$ is measurable w.r.t. the product $\sigma$-algebra $\cB(\Theta)\otimes\cR(\RRR^{d})$ and lower semicontinuous in the parameters $(\theta,x)$ because $\Phi^{*}$ is continuous and nonincreasing. Next, by Lemma \ref{relationship}, the mapping $\xi_{k}$ is a positive envelope of $\FFF^{\Theta}_{\Phi,k}$, which is also square $\MP^{Z}$-integrable by assumption. Hence $G_{\Phi}\big((\theta,x),\cdot\big)$ is square $\MP^{Z}$-integrable for $(\theta,x)\in\Theta\times [-k,k]$. Thus 
$\overline{d}_{\Theta,\Phi}$ is well-defined on $\Theta\times [-k,k]$ for $k\in\NNN$. 
\par
Now, the entire statement of Proposition \ref{restricted optimization} follows immediately from Proposition \ref{MessbarkeitLoesbarkeit}, and Theorem \ref{asymptoticDistributionRiskNeutral} along with Lemma \ref{relationship}. 
\end{proof}
Combining Proposition \ref{restricted optimization} with Proposition \ref{stochastic equicontinuity} we may derive our main result concerning first order asymptotics of the SAA \eqref{optimization approximativ}.
\begin{theorem}
\label{first order asymptotics divergence}
Let (A1), (A2') be fulfilled. The measurable mapping $\xi$ from (A2') is assumed to satisfy the property that the mapping $\xi_{k} := [\Phi^{*'}_{+}(\xi + k) + 1]\sqrt{\xi^{2} + k^{2}}$ is square $\MP^{Z}$-integrable for every $k\in\NNN$. 
If $\overline{J}(\FFF^{\Theta},\xi,1)$ is finite, then the following statements are valid.
\begin{itemize}
\item[1)]$\big(\inf\limits_{\theta\in\Theta}\cR_{\rho^{\Phi}}(\hF_{n,\theta}) - \inf\limits_{\theta\in\Theta}\cR_{\rho^{\Phi}}(F_{\theta})\big)_{n\in\NNN}$ is a sequence of random variables.
\item [2)] The sets $\argmin \cR_{\rho^{\Phi}}$ and $M_{F_{\theta}}$ ($\theta\in\Theta$) are nonvoid and compact.
\item [3)] For $k\in\NNN$ the semimetric $\overline{d}_{\Theta,\Phi}$ is totally bounded on $\Theta\times [-k,k]$, and there exists some centered Gaussian process $\mathfrak{G}^{\Phi} = (\mathfrak{G}^{\Phi}_{(\theta,x)})_{(\theta,x)\in\Theta\times\RRR}$ with 
$$
\EEE\big[\mathfrak{G}^{\Phi}_{(\theta,x)}\cdot \mathfrak{G}^{\Phi}_{(\vartheta,k)}\big] = 
\covi
\Big(G_{\Phi}\big((\theta,x),Z_{1}\big), G_{\Phi}\big((\vartheta,y),Z_{1})\Big)\quad\mbox{for}~\theta, \vartheta\in\Theta;~x, y\in \RRR
$$
such that 
the sequence $\big(\sqrt{n}[\inf\limits_{\theta\in\Theta}\cR_{\rho^{\Phi}}(\hF_{n,\theta}) - \inf\limits_{\theta\in\Theta}\cR_{\rho^{\Phi}}(F_{\theta})]\big)_{n\in\NNN}$ converges weakly to 
$$
\sup_{\theta\in\argmin \cR_{\rho^{\Phi}}}\sup_{x\in M_{F_{\theta}}}\mathfrak{G}^{\Phi}.
$$
In addition the paths of the Gaussian process are uniformly continuous on the set $\Theta\times [-k,k]$ w.r.t. $\overline{d}_{\Theta,\Phi}$ for $k\in\NNN$. Moreover, if the optimization problem (\ref{optimization general II}) has a unique solution $\theta^{*}$, and if $M_{F_{\theta^{*}}}$ has one element $x_{\theta^{*}}$ only, then the weak limit is some centered normally distributed random variable with variance $\vari\big(\Phi^{*}\big(G(\theta^{*},Z_{1}) + x^{*}\big)\big)$.
\end{itemize}
\end{theorem}

\begin{proof}
By Theorem \ref{optimized certainty equivalent} and representation \eqref{optimization approximativ} we have 
\begin{align}
&\label{startrepresentation}
\inf\limits_{\theta\in\Theta}\cR_{\rho^{\Phi}}(F_{\theta}) = \inf\limits_{(\theta,x)\in\Theta\times\RRR}\EEE[G_{\Phi}\big((\theta,x),Z_{1}\big)],\\
&\nonumber
\inf\limits_{\theta\in\Theta}\cR_{\rho^{\Phi}}(\hF_{n,\theta}) = \inf_{k\in\NNN}~\inf_{(\theta,x)\in\Theta\times [-k,k]}\frac{1}{n}\sum_{j=1}^{n}G_{\Phi}\big((\theta,x),Z_{j}\big)\quad\mbox{for}~n\in\NNN.
\end{align}
Then statement 1) may be concluded immediately from statement 1) of Proposition \ref{restricted optimization} because the optimal values of the optimization problemes \eqref{optimization general II} and \eqref{optimization approximativ} are always finite due to Proposition \ref{stochastic equicontinuity}. Concerning statement 2), we already know from Proposition \ref{function of certainty equivalents basic properties} in the Appendix that for any $\theta\in\Theta$, the set $M_{F_{\theta}}$ is nonvoid and compact. Furthermore, the set of minimizers of \eqref{auxiliary problem} is nonvoid and compact by Proposition \ref{stochastic equicontinuity}. Then $\theta$ belongs to $\argmin~\cR_{\rho^{\Phi}}$ if and only if $(\theta,x)$ is a solution of \eqref{auxiliary problem} for some $x\in\RRR$. Hence $\argmin~\cR_{\rho^{\Phi}}$ is nonvoid and compact so that statement 2) is verified. Hence it remains to show statement 3).
\smallskip

For this purpose let us select a sequence $(\frak{G}^{k})_{k\in\NNN}$ of centered Gaussian processes $\frak{G}^{k} = \big(\frak{G}^{k}_{(\theta,x)}\big)_{(\theta,x)\in\Theta\times [-k,k]}$ as in statement 3) of Proposition \ref{restricted optimization}.  With $\xi$ from 
(A2'), and defining for $n\in\NNN$ the set $A_{n,1}^{\xi}$ as in the text preceding Proposition \ref{stochastic equicontinuity}, we may find by Proposition \ref{stochastic equicontinuity} some $k_{1}\in\NNN$ such that the set of minimizers of $\EEE\big[G_{\Phi}(\cdot,Z_{1})\big]$ is contained in $\Theta\times [-k_{1},k_{1}]$,
%\begin{align*}
%\inf_{(\theta,x)\in\Theta\times\RRR}\EEE\big[G_{\Phi}\big((\theta,x),Z_{1}\big)\big] = 
%\inf_{(\theta,x)\in\Theta\atop x\in [-k_{1},k_{1}]}\EEE\big[G_{\Phi}\big((\theta,x),Z_{1}\big)\big],
%\end{align*}
and for $n\in\NNN, \omega\in A_{n,1}^{\xi}$
\begin{align*}
\inf_{(\theta,x)\in\Theta\times\RRR}\frac{1}{n}\sum_{j=1}^{N}G_{\Phi}\big((\theta,x),Z_{j}(\omega)\big) = 
\inf_{(\theta,x)\in\Theta\atop x\in [-k_{1},k_{1}]}\frac{1}{n}\sum_{j=1}^{N} G_{\Phi}\big((\theta,x),Z_{j}(\omega)\big).
\end{align*}
Next, by assumption, the sequences $\big(\xi(Z_{j})\big)_{j\in\NNN}$ and  $\big\{\Phi^{*}\big(\xi(Z_{j})\big)\big\}_{j\in\NNN}$ consist of independent integrable random variables which are identically distributed as $\xi(Z_{1})$ and $\Phi^{*}(\xi(Z_{1})\big)$ respectively. Then $\MP(\Omega\setminus A_{n,1}^{\xi}) \to 0$ by law of large numbers, and thus 
$$
\lim_{n\to\infty}\MP\Big(\Big\{\big|\sqrt{n}\big[\inf_{\theta\in\Theta\atop |x|\leq k_{1}}\frac{1}{n}\sum_{j=1}^{n}G_{\Phi}\big((\theta,x),Z_{j}\big)- \inf_{\theta\in\Theta\atop x\in\RRR}\frac{1}{n}\sum_{j=1}^{n}G_{\Phi}\big((\theta,x),Z_{j}\big)\big]\big| > \varepsilon\Big\}\Big) = 0
$$
for $\varepsilon > 0$. Note also that by choice of $k_{1}$ the set $\mathcal{S}_{\RRR}$ of solutions of \eqref{auxiliary problem} coincides with the set $\mathcal{S}_{k_{1}}$ of minimizers of $\EEE\big[G_{\Phi}(\cdot,Z_{1})\big]$ on $\Theta\times [-k_{1},k_{1}]$. Then in view of statement 3) of Proposition \ref{restricted optimization} along with \eqref{startrepresentation} and \eqref{optimization approximativ}
\begin{equation}
\label{Hauptkonvergenz}
\big(\sqrt{n}\big[ \inf_{\theta\in\Theta}\cR_{\rho^{\Phi}}(\hF_{n,\theta}) - \inf_{\theta\in\Theta}\cR_{\rho^{\Phi}}(F_{\theta})\big]\big)_{n\in\NNN}~\mbox{converges weakly to}~\inf_{(\theta,x)\in \mathcal{S}_{\RRR}}\frak{G}^{k_{1}}_{(\theta,x)}.
\end{equation}
Note also that the semimetric $\overline{d}_{\Theta,\Phi}$ is totally bounded on $\Theta\times [-k,k]$ for every $k\in\NNN$ by statement 3) of Proposition \ref{restricted optimization}. So we may find an isotone sequence $(\Gamma_{k})_{k\in\NNN}$ of at most countable sets such that $\Gamma_{k}\subseteq \Theta\times [-k,k]$ dense w.r.t. $\overline{d}_{\Theta,\Phi}$ for every $k\in\NNN$. Set $\Gamma := \bigcup_{k=1}^{\infty}\Gamma_{k}$.
\smallskip

By Kolmogorov's consistency theorem there exists some centered Gaussian process 
$\overline{\mathfrak{G}}^{\Phi} = \big(\overline{\mathfrak{G}}^{\Phi}_{(\theta,x)}\big)_{(\theta,x)\in\Gamma}$ on a probability space $\tOFP$ with covariances as in statement 3) of Theorem \ref{first order asymptotics divergence} (see e.g. \cite[Theorem 12.1.3]{Dudley2002}). Hence $\overline{\mathfrak{G}}^{\Phi}$ and $\mathfrak{G}^{k}$ have identical finite dimensional marginal distributions on $\Theta\times [-k,k]$ for $k\in\NNN$. This implies 
\begin{align*}
\EEE\big[\sup_{((\theta,x), (\vartheta,y))\in\cU^{k}_{\delta}}|\overline{\mathfrak{G}}^{\Phi}_{(\theta,x)} - \overline{\mathfrak{G}}^{\Phi}_{(\vartheta,y)}|\big]
=
\EEE\big[\sup_{((\theta,x), (\vartheta,y))\in\cU^{k}_{\delta}}|\mathfrak{G}^{k}_{(\theta,x)} - \mathfrak{G}^{k}_{(\vartheta,y)}|\big] \quad\mbox{for}~\delta > 0;~ k\in\NNN,
\end{align*}
where $\cU^{k}_{\delta} := \big\{\big((\theta,x),(\vartheta,y)\big)\in \Gamma_{k}\times\Gamma_{k}\mid \overline{d}_{\Theta,\Phi}\big((\theta,x),(\vartheta,y)\big) < \delta\big\}$. Since each $\mathfrak{G}^{k}$ has $\overline{d}_{\Theta,\Phi}$-uniformly continuous paths, we may invoke Theorem 4 from \cite{KuelbsEtAl2013} to conclude
\begin{align*}
\lim_{\delta\searrow 0}\EEE\big[\sup_{((\theta,x), (\vartheta,y))\in\cU^{k}_{\delta}}|\overline{\mathfrak{G}}^{\Phi}_{(\theta,x)} - \overline{\mathfrak{G}}^{\Phi}_{(\vartheta,y)}|\big]
=
0\quad\mbox{for}~ k\in\NNN.
\end{align*}
Then by Proposition \ref{extension result} in Appendix \ref{AppendixB} there exists some 
version $\frak{G}^{\Phi} = \big(\frak{G}^{\Phi}_{(\theta,x)}\big)_{(\theta,x)\in\Theta\times\RRR}$ of $\overline{\mathfrak{G}}^{\Phi}$ which has $\overline{d}_{\Theta,\Phi}$-uniformly continuous paths on $\Theta\times [-k,k]$ for $k\in\NNN$. In particular $\frak{G}^{\Phi}$ is a centered Gaussian process with covariances as in statement 3) of Theorem \ref{first order asymptotics divergence}. This also means that $\frak{G}^{\Phi}$ and $\frak{G}^{k_{1}}$ have identical finite-dimensional distributions on $\Theta\times [-k_{1},k_{1}]$, implying that 
$\inf\limits_{(\theta,x)\in\widetilde{\Gamma}}\frak{G}^{\Phi}$ and $\inf\limits_{(\theta,x)\in\widetilde{\Gamma}}\frak{G}^{k_{1}}$ are identically distributed for any nonvoid at most countable subset $\widetilde{\Gamma}\subseteq\Theta\times [-k_{1},k_{1}]$. Note that $\overline{d}_{\Theta,\Phi}$ is totally bounded on $\mathcal{S}_{\RRR}$, and thus 
separable. Recall also that 
the paths of the processes $\frak{G}^{\Phi}$ and $\frak{G}^{k_{1}}$ are uniformly continuous on $\Theta\times [-k_{1},k_{1}]$ w.r.t. $\overline{d}_{\Theta,\Phi}$. Therefore we may verify that the mappings $\inf\limits_{(\theta,x)\in\mathcal{S}_{\RRR}}\frak{G}^{\Phi}_{(\theta,x)}$ and 
$\inf\limits_{(\theta,x)\in\mathcal{S}_{\RRR}}\frak{G}^{k_{1}}_{(\theta,x)}$ are identically distributed random variables. Then in view of \eqref{Hauptkonvergenz} 
\begin{equation}
\label{Hauptkonvergenz2}
\big(\sqrt{n}\big[ \inf_{\theta\in\Theta}\cR_{\rho^{\Phi}}(\hF_{n,\theta}) - \inf_{\theta\in\Theta}\cR_{\rho^{\Phi}}(F_{\theta})\big]\big)_{n\in\NNN}~\mbox{converges weakly to}~\inf_{(\theta,x)\in \mathcal{S}_{\RRR}}\frak{G}^{\Phi}_{(\theta,x)}.
\end{equation}
Moreover, we already know from statement 2) that the sets $\argmin\cR_{\rho^{\Phi}}$ and $ M_{F_{\theta}}$ $(\theta\in\Theta)$ are nonvoid. We may also observe that $\argmin\cR_{\rho^{\Phi}} = \textrm{Pr}(\mathcal{S}_{\RRR})$ holds, where $\textrm{Pr}$ denotes the standard projection from $\Theta\times\RRR$ onto $\Theta$. 
Therefore
%$$
%S(F) = \bigcup_{\theta\in \argmin R_{\rho^{\Phi}}}\{\theta\}\times M_{F_{\theta}},
%$$ 
%and thus
\begin{equation}
\label{andere Darstellung}
M_{F_{\theta}}\subseteq [-k_{1},k_{1}]~\mbox{for}~\theta\in\argmin R_{\rho^{\Phi}},~
\inf_{(\theta,x)\in \mathcal{S}_{\RRR}}\frak{G}^{\Phi}_{(\theta,x)} = \inf_{\theta\in \argmin R_{\rho^{\Phi}}}\inf_{x\in M_{F_{\theta}}}\frak{G}^{\Phi}_{(\theta,x)}.
\end{equation}
Combining \eqref{Hauptkonvergenz2} and \eqref{andere Darstellung} with the above mentioned properties of $\frak{G}^{\Phi}$, the first part of statement 3) follows immediately. The remaing part is an obvious consequence of the first one. The proof is complete.
\end{proof}
\begin{remark}
If $\Phi(0) = 0$, and if $\Phi^{*}$ is strictly convex on $]0,\infty[$, then $M_{F_{\theta}}$ is a singleton for every $\theta\in\Theta$ due to Proposition \ref{unique minimizer} in the Appendix below. In this situation we may obtain asymptotic normality in the statement 3) of Theorem \ref{first order asymptotics divergence} if the genuine optimization problem (\ref{optimization general II}) has a unique solution $\theta^{*}$.
\end{remark}
\begin{remark}
In case that $G$ either satisfies condition (H) or has representation (PH) Example \ref{Hoelder-Bedingung auxiliary} or Example \ref{startingpoint} respectively show how to find proper positive envelopes $\xi$ of $\FFF^{\Theta}$ meeting all requirements of Theorem \ref{first order asymptotics divergence}.
\end{remark}
Next let us illustrate the assumptions of Theorem \ref{first order asymptotics divergence} by the example of the so called \textit{Average Value at Risk} also known as \textit{Expected Shortfall}.
\begin{example}
\label{AVaR}
Let $\Phi$ be defined by $\Phi_{\alpha}(x) := 0$ for $x\leq 1/(1-\alpha)$ for some $\alpha\in ]0,1[$, and $\Phi(x) := \infty$ if $x > 1/(1-\alpha)$. Then $\Phi^{*}_{\alpha}(y) = y^{+}/(1-\alpha)$ for $y\in\R$. In particular $H^{\Phi^{*}}$ coincides with $L^{1}$, and we may recognize $\cR_{\rho^{\Phi}}$ as the so called \textit{Average Value at Risk} w.r.t. $\alpha$ (e.g. \cite{FoellmerSchied2011}, \cite{ShapiroEtAl}), i.e.
$$
\cR_{\rho^{\Phi}}(F) = \frac{1}{1-\alpha}~\int_{\Flinks(\alpha)}^{1}\eins_{]0,1[}(u)~\Flinks(u)~du = \inf_{x\in\R}\left(\int_{0}^{1}\eins_{]0,1[}(u)~\frac{(\Flinks(u) + x)^{+}}{1-\alpha}~du - x\right)
$$
(see e.g. \cite{KainaRueschendorf2007}). It may be verified easily that $M_{F_{0}} = [F^{\leftarrow}_{0}(\alpha),F^{\rightarrow}_{0}(\alpha)]$, where $F_{0}^{\rightarrow}$ denotes the right-continuous quantile function of $F_{0}\in \FFF_{\Phi^{*}}$.
\begin{itemize}
\item [1)] Let $\xi$ be a square $\MP^{Z}$-integrable positive envelope of $\FFF^{\Theta}$. Then $\xi$ satisfies (A2'), and the mapping $\xi_{k} := [\Phi^{*'}_{\alpha +}(\xi + k) + 1]\sqrt{\xi^{2} + k^{2}}$ is square $\MP^{Z}$-integrable for every $k\in\NNN$. In particular Theorem \ref{first order asymptotics divergence} carries over if (A1) is satisfied, and if $\overline{J}(\FFF^{\Theta},\xi,1)$ is finite.
\item [2)] If condition (H) is satisfied, and if $G(\overline{\theta},\cdot)$ is square $\MP^{Z}$-integrable for some $\overline{\theta}\in\Theta$, then we may find by Example \ref{Hoelder-Bedingung auxiliary} some square $\MP^{Z}$-integrable positive envelope of $\FFF^{\Theta}$ with finite $\overline{J}(\FFF^{\Theta},\xi,1)$. Hence in view of statement 1) Theorem \ref{first order asymptotics divergence} may be always applied under (A1).
\item [3)] In case that $G$ has representation (PH) being lower semicontinuous in $\theta$, there exists by Example \ref{startingpoint} some square $\MP^{Z}$-integrable positive envelope of $\FFF^{\Theta}$ with finite $\overline{J}(\FFF^{\Theta},\xi,1)$. Furthermore 
(A1) is automatically fulfilled. Therefore by statement 1) Theorem \ref{first order asymptotics divergence} may be applied.
\item [4] In \cite{GuiguesKraetschmerShapiro2018} the asymptotic result of Theorem \ref{first order asymptotics divergence} was obtained under (H) with $\beta = 1$. In addition convexity in $\theta$ was imposed on the goal function $G$ (see \cite[Theorem 2] {GuiguesKraetschmerShapiro2018}). However the investigations there are extended to optimization problems 
$$
\inf_{\theta\in\Theta}\sup_{w\in\frak{W}}\Big(w_{0}\EEE\big[G(\theta,Z_{1})\big] + \sum_{i=1}^{r}w_{i} \rho_{\Phi_{\alpha_{i}}}\big(G(\theta,Z_{1})\big)\Big)
$$
for fixed $\alpha_{1},\ldots,\alpha_{r}\in ]0,1[$. Here $\frak{W}$ denotes a nonvoid subset of $\Delta_{r+1}$ which consists of all $w\in [0,\infty[^{r+1}$ satisfying $w_{0}+\dots+w_{r} = 1$.
\end{itemize}
\end{example}
\begin{remark}
\label{confidence interval III}
 If the optimization problem (\ref{optimization general II}) has a unique solution $\theta^{*}$, and if $M_{F_{\theta^{*}}}$ has one element $x_{\theta^{*}}$ only, then with positive upper estimate $L\geq \sqrt{\vari\big(\Phi^{*}\big(G(\theta^{*},Z_{1}) + x^{*}\big)\big)}$ we may construct by Theorem \ref{first order asymptotics divergence} asymptotic confidence intervals on the optimal value of (\ref{optimization general II}) in the same way as described in Remark \ref{confidence interval I}. Alternatively, nonasymptotic confidence intervals may be built on upper estimates for the deviation probabilities 
$$
\MP\Big(\Big\{\big|\inf_{\theta\in\Theta}\cR_{\rho^{\Phi}}\big(\hat{F}_{n,\theta}\big) - \inf_{\theta\in\Theta}\cR_{\rho^{\Phi}}\big(F_{\theta}\big)\big|\geq\varepsilon\Big\}\Big)\quad(n\in\NNN,\varepsilon > 0)
$$
from Theorem 4.6 in \cite{Kraetschmer2022}. In view of Example \ref{startingpoint} both approaches may be applied to objectives satisfying representation (PH), e.g. in two stage mixed-integer programs.
\par
In the special case of $\rho$ being the Average Value at Risk it would be interesting to compare the confidence intervals corresponding to both methods with those confidence which are obtained with stochastic mirror descent as in Subsection 5.2.2 of \cite{GuiguesKraetschmerShapiro2018}. However, for this purpose we have to impose further regularity conditions on the objective at least convexity and continuity in the parameters.
 \end{remark}
%%%%%%%%%%%%%%%%%%%%%%%%%%%%%%%%%%%%%%%%%%%%%%%%%%%%%%%%%%%%%%%%%%%%%%%%%%%%%%%%%%%%%%%%%%%%%%%%%%%%%%%%%%%%%%%%%%%%%%%%%%%%%%%%%%%%%%%%%%%%%%%%%%%%%%%%%%%%%%%%%%%%%%%%%%%%%%%%%%%%%%%%%%%%%%%%%%%%%%%%%%%%%%%%%%%%%%%%%%%%%%%%%%%%%%%%%%%%%%%%%%%%%%%%%%%%%%%%%%%%%%%%%%%%%%%%%
\section{The basic technical result}
\label{basic result}
In Sections \ref{FOA absolute semideviations}, \ref{generalized divergence risk measures} the main convergences results, namely Theorems \ref{first asymptotics absolute semideviations}, \ref{first order asymptotics divergence}, are derived as applications of Theorem \ref{asymptoticDistributionRiskNeutral}. Roughly speaking our verification of Theorem \ref{asymptoticDistributionRiskNeutral} combines a convergence theorem for empirical processes with the functional delta method for optimal values. This section provides a suitable convergence result for empirical processes adapted to the situation of the paper.
\smallskip

We shall use the following notations. For some nonvoid set $\mathbb{T}$ we shall denote by $l^{\infty}(\mathbb{T})$ the space of all bounded real-valued mappings on $\mathbb{T}$. It will be endowed with the supremum norm $\|\cdot\|_{\mathbb{T},\infty}$ and the induced Borel $\sigma$-algebra.
\medskip

Let us introduce the random processes 
$$
Y_{n}: \Theta\times\Omega\rightarrow\RRR,~(\theta,\omega)\mapsto \frac{1}{n}\sum_{j=1}^{n}\Big(G\big(\theta,Z_{j}(\omega)\big) - \EEE[G(\theta,Z_{1})]\Big)\quad(n\in\NNN). 
$$
If the objective $G$ satisfies the properties (A1) and (A2), then $Y_{n}(\cdot,\omega)$ belongs to $l^{\infty}(\Theta)$ for every $\omega\in\Omega$. 
\smallskip

Under (A1), (A2) the mapping $\theta\mapsto \EEE[G(\theta,Z_{1})]$ on $\Theta$ is lower semicontinuous (see Lemma \ref{basic observations} below). Hence each process $Y_{n}$ is $\cB(\Theta)\otimes\cB(\RRR^{d})$-measurable by (A1), $\Theta$ is a Polish space, and $\OFP$ is complete. Then $\sup_{\theta\in\Theta}|Y_{n}(\theta,\cdot)| = \|Y_{n}\|_{\Theta,\infty}$ is a random variable on $\OFP$ (see \cite[Lemma 1.7.5]{vanderVaartWellner1996}). We shall study the convergence of the sequence built upon these random variables.
\smallskip

Turning over to the issue of asymptotic distributions of the random processes $Y_{n}$, we are faced with the inconvenience that they might not be Borel random elements of $l^{\infty}(\Theta)$. Hence in general we may not apply weak convergence to the random processes $Y_{n}$. Fortunately, for our purposes it is sufficient to look instead when the sequence $(\sqrt{n}~Y_{n})_{n\in\NNN}$ of empirical processes $\sqrt{n}~Y_{n}$ converges in law (in the sense of Hoffmann-Jorgensen) to some tight Borel random element of $l^{\infty}(\Theta)$. 
Recall that for a sequence $\big((\overline{\Omega}_{n},\overline{\cF}_{n},\overline{\MP}_{n})\big)_{n\in\NNN}$ of probability spaces and a metric space $(\mathcal{D},d_{\mathcal{D}})$, a sequence $(\overline{W}_{n})_{n\in\NNN}$ of mappings $\overline{W}_{n}:\overline{\Omega}_{n}\rightarrow\mathcal{D}$ is called to \textit{converge in law} (in the sense of Hoffmann-Jorgensen) to a Borel random element $\overline{W}$ of $\mathcal{D}$ if the convergence $\EEE_{n}^{*}[f(\overline{W}_{n})]\to \EEE[f(\overline{W})]$ holds for every bounded continuous $f:\mathcal{D}\rightarrow\RRR$. Here $\EEE_{n}^{*}$ is used to denote the outer expectation w.r.t. to $\overline{\MP}_{n}$. For introduction and further studies of this kind of convergence we recommend \cite{vanderVaartWellner1996}, where, however, it is called weak convergence. Note that the mappings are not required to be Borel random elements of $\mathcal{D}$. Thus convergence in law differs from the usual weak convergence of Borel random elements. We decided to emphasize this difference by avoiding the term weak convergence. Obviously both concepts coincide if the involved mappings are Borel random elements of $\mathcal{D}$.
\medskip

Let us remind the semimetric $\overline{d}_{\Theta}$ on $\Theta$, defined just before Theorem \ref{asymptoticDistributionRiskNeutral}. Our basic technical result is the following criterion which guarantees almost sure convergence of $\big(\|Y_{n}(\theta,\cdot)\|_{\Theta,\infty}\big)_{n\in\NNN}$, and convergence in law of $(\sqrt{n}~Y_{n})_{n\in\NNN}$ to some tight centered Gaussian random element of $l^{\infty}(\Theta)$.
\begin{theorem}
\label{main technical result}
Let (A1), (A2) be fulfilled, where the mapping $\xi$ from (A2) is square $\MP^{Z}$-integrable.  
Using notation \eqref{Entropie-Integral I}, if $\overline{J}(\FFF^{\Theta},\xi,1)$ is finite, then the following statements are valid. 
\begin{itemize}
\item [1)] $\lim\limits_{n\to\infty}\|Y_{n}\|_{\Theta,\infty} = 0$ $\MP$-a.s..
\item [2)] 
$\overline{d}_{\Theta}$ is totally bounded, and there exists some tight random element $\mathfrak{G}$ of $l^{\infty}(\Theta)$ such that 
the sequence $(\sqrt{n}~Y_{n})_{n\in\NNN}$ converges in law to $\mathfrak{G}$. 
This tight random element is a centered Gaussian process $\mathfrak{G} = (\mathfrak{G}_{\theta})_{\theta\in\Theta}$ which has uniformly continuous paths w.r.t. $\overline{d}_{\Theta}$, satisfying in addition 
$$
\EEE\big[\mathfrak{G}_{\theta}\cdot \mathfrak{G}_{\vartheta}\big] = 
\covi
\big(G(\theta,Z_{1}), G(\vartheta,Z_{1})\big)\quad\mbox{for}~\theta, \vartheta\in\Theta.
$$
\end{itemize}
\end{theorem}
Theorem \ref{main technical result} has the following corollary which will turn out to be useful in the context of the SAA method under absolute semideviation. 
\begin{corollary}
\label{technical corollary}
Define for any nonvoid compact interval $\mathcal{I}\subseteq\RRR$ the real valued mapping $\overline{G}_{\mathcal{I}}$ on $(\Theta\times\mathcal{I})\times\RRR^{d}$ via $\overline{G}_{\mathcal{I}}\big((\theta,t),z\big) := \big(G(\theta,z)- t)^{+}$. Then under the assumptions of Theorem \ref{main technical result} the mappings 
$$
\overline{X}^{\mathcal{I}}_{n}: (\Theta\times\mathcal{I})\times\Omega\rightarrow\RRR,~\big((\theta,t), \omega\big)\mapsto 
\frac{1}{n}\sum_{j=1}^{n}\overline{G}_{\mathcal{I}}\big((\theta,t),Z_{j}(\omega)\big) -\EEE\big[\overline{G}_{\mathcal{I}}\big((\theta,t),Z_{1}\big)\big]\quad(n\in\NNN)
$$
satisfy 
$\overline{X}^{\mathcal{I}}_{n}(\cdot,\omega)\in l^{\infty}(\Theta\times\mathcal{I})$ for every $\omega\in\Omega$, and the sequence $(\sqrt{n}\overline{X}^{\mathcal{I}}_{n})_{n\in\NNN}$ converges in law to some tight centered Gaussian random element of $l^{\infty}(\Theta\times\mathcal{I})$.
\end{corollary}
\begin{proof}
We may observe 
\begin{align*}
\big|\overline{G}_{\mathcal{I}}\big((\theta,t),z\big) - \overline{G}_{\mathcal{I}}\big((\vartheta,s),z\big)\big|^{2}
\leq 
4\big(|G(\theta,z) - G(\vartheta,z)|^{2} + |t - s|^{2}\big)
\end{align*}
for $\theta,\vartheta\in\Theta; t, s\in\RRR$. Property (A2) is assumed to hold for some square $\MP^{Z}$-integrable mapping $\xi$. Then $\xi + |\inf\mathcal{I}| + |\sup\mathcal{I}|$ is a square $\MP^{Z}$-integrable positive envelope of $\big\{\overline{G}_{\mathcal{I}}\big((\theta,t),\cdot\big)\mid (\theta,t)\in \Theta\times\mathcal{I} \big\}$. Now, the statement of Corollary \ref{technical corollary} may be derived easily from Theorem \ref{main technical result}, using Corollary 2.10.13 from \cite{vanderVaartWellner1996}.
\end{proof}
%%%%%%%%%%%%%%%%%%%%%%%%%%%%%%%%%%%%%%%%%%%%%%%%%%%%%%%%%%%%%%%%%%%%%%%%%%%%%%%%%%%%%%%%%%%%%%%%%%%%%%%%%%%%%%%%%%%%%%%%%%%%%%%%%%%%%%%%%%%%%%%%%%%%%%%%%%%%%%%%%%%%%%%%%%%%%%%%%%%%%%%%%%%%%%%%%%%%%%%%%%%%%%%%%%%%%%%%%%%%%%%%%%%%%%%%%%%%%%%%%%%%%%%%%%%%%%%%%%%%%%%%%%%%%%%%%
\section{Proofs}
\label{Beweise}
Let us introduce the sequence 
$\big(X_{n}\big)_{n\in\NNN}$ of random processes  
$$
X_{n}:\Omega\times\Theta\rightarrow\RRR,~X_{n}(\omega,\theta) := \frac{1}{n}~\sum_{i=1}^{n}G\big(\theta,Z_{i}(\omega)\big) \quad(n\in\NNN),
$$
and under (A2) the mapping
$
\psi:\theta\rightarrow\RRR,~\theta\mapsto \EEE[G(\theta,Z_{1})].
$
First of all we want to fix lower semicontinuous of the mapping $\psi$.
\begin{lemma}
\label{basic observations}
Under (A1), (A2) the mapping $\psi$ is lower semicontinuous. It is even continuous if in addition property (A3) holds. 
\end{lemma}
\begin{proof}
With $\xi$ from (A2) the mapping $G\big(\cdot,Z_{1}(\omega)\big) + \xi\big(Z_{1}(\omega)\big)$ is nonnegative and lower semicontinuous for $\omega\in\Omega$ due to (A2) along with (A1).
Then an application of Fatou's Lemma shows that $\psi + \EEE[\xi(Z_{1})]$ is lower semicontinuous. This implies lower semicontinuity of $\psi$. If in addition (A3) is fulfilled, then continuity of $\psi$ follows directly from Vitalis' theorem (see \cite[Proposition 21.4]{Bauer2001}). This completes the proof.
\end{proof}
\subsection{Proof of Proposition \ref{MessbarkeitLoesbarkeit}}
\label{Beweis von Proposition MessbarkeitLoesbarkeit}
%Now we are ready to show Proposition \ref{MessbarkeitLoesbarkeit}.
%\medskip
%
%\noindent
%\textbf{Proof of Proposition \ref{MessbarkeitLoesbarkeit}:}\\[0.1cm]
The mapping $\psi$ is lower semicontinuous due to Lemma \ref{basic observations}. Then statement 2) follows immediately due to compactness of $\Theta$.
\smallskip

Concerning statement 1) let $n\in\NNN$. The mapping $X_{n}(\omega,\cdot)$ is bounded from below on $\Theta$ for every $\omega\in\Omega$ by (A2). Furthermore 
$$
\big\{\omega\in\Omega\mid \inf_{\theta\in\Theta}X_{n}(\omega,\theta) < t\big\} = \textrm{Pr}_{\Omega}\big(\big\{(\omega,\theta)\in\Omega\times\Theta\mid X_{n}(\omega,\theta) < t\big\}\big)\quad\mbox{for}~t\in\RRR,
$$
where $\textrm{Pr}_{\Omega}$ denotes the standard projection from $\Omega\times\Theta$ onto $\Omega$. By the assumption (A1) the set $\big\{(\theta,\omega)\in\Omega\times\Theta\mid X_{n}(\omega,\theta) < t\big\}$ belongs to $\cF\otimes\cB(\Theta)$ for every $t\in\RRR$. Since $\Theta$ is a Polish space, and since $\OFP$ is complete, we may conclude that the set $\big\{\omega\in\Omega\mid \inf_{\theta\in\Theta}X_{n}(\cdot,\theta) < t\big\}$ is a member of $\cF$ (see \cite[Proposition 8.4.4]{Cohn1980}). In particular, $\inf_{\theta\in\Theta}X_{n}(\cdot,\theta)$ is a random variable on $\OFP$ which completes the proof.
%
%Furthermore, $\overline{X}_{n}$ is a normal integrand w.r.t. $\Theta$ and $\big(\RRR^{d},\cB(\RRR^{d})\big)$ due to Lemma \ref{normal integrands}, and $X_{n}= \overline{X}_{n}(Z_{1},\ldots,Z_{n},\cdot,\cdot)$. In particular $X_{n}$ is a normal integrand w.r.t. $\Theta$ and $(\Omega,\cF)$. Since $\Theta$ is metrizable compact so that the application of Theorem 2.2 in \cite{Levin1994} yields that $\inf_{\theta\in\Theta}X_{n}(\cdot,\theta)$ is a random variable on $\OFP$. This completes the proof.
\hfill$\Box$

%%%%%%%%%%%%%%%%%%%%%%%%%%%%%%%%%%%%%%%%%%%%%%%%%%%%%%%%%%%%%%%%%%%%%%%%%%%%%%%%%%%%%%%%%%%%%%%%%%%%%%%%%%%%%%%%%%%%%%%%%%%%%%%%%%%%%%%%%%%%%%%%%%%%%%%%%%%%%%%%%%%%%%%%%%%%
\subsection{Proof of Theorem \ref{main technical result} and Theorem  \ref{asymptoticDistributionRiskNeutral}}
\label{Beweis von Hauptresultat}
%Note first that the processes 
%$X_{n}$ always have paths in $l^{\infty}(\Theta)$ if assumption (A 2) is satisfied. Also the mapping $\psi$ is bounded under (A 2). The key for the proof of Theorem \ref{asymptoticDistributionRiskNeutral} is to show that the sequence $\big(\sqrt{n}][X_{n} - \psi]\big)_{n\in\NNN}$ converges in law to some centered tight Gaussian random element of $l^{\infty}(\Theta)$. 
%\medskip
Let $(Y_{n})_{n\in\NNN}$ be the sequence of stochastic processes introduced in Section \ref{basic result}. In order to show the desired convergences of $\big(\|Y_{n}\|_{\Theta,\infty}\big)_{n\in\NNN}$ and $(\sqrt{n}~Y_{n})_{n\in\NNN}$ in $l^{\infty}(\Theta)$ we shall invoke results from empirical process theory. We have to circumvent some subtleties of measurability, reminding the notion of $\MP^{Z}$-measurable classes. A class $\FFF$ of Borel measurable mappings from $\RRR^{d}$ into $\RRR$ is called a $\MP^{Z}$\textit{-measurable class} if for $n\in\NNN$ and $a_{1},\ldots,a_{n}\in\RRR^{n}$, and every $h\in\FFF$ the mapping 
$$
\RRR^{nd}\rightarrow\RRR,~(z_{1},\ldots,z_{n})\mapsto \sup_{h\in\FFF}\big|\sum_{j=1}^{n}a_{i} h(z_{j})\big|
$$
is well-defined and measurable on the completion of the $n$-times product probability space $\big(\RRR^{nd},\cB(\RRR^{nd}),(\MP^{Z})^{n}\big)$ of $\big(\RRR^{d},\cB(\RRR^{d}),\MP^{Z}\big)$ 
(see \cite[Definition 2.3.3]{vanderVaartWellner1996}).
\smallskip

Define for a nonvoid nonvoid $\Gamma\subseteq\Theta\times\Theta$ the function class $\FFF_{\Gamma}^{\Theta}$ consisting of all $G(\theta,\cdot) - G(\vartheta,\cdot)$ with $\theta, \vartheta\in\Gamma$, and set 
$\FFF^{\Theta,2}_{\Gamma} := \big\{|G(\theta,\cdot) - G(\vartheta,\cdot)|^{2}\mid \theta, \vartheta\in\Gamma\big\}$. If $\Gamma$ is a Borel subset of $\Theta\times\Theta$ these classes are already $\MP^{Z}$-measurable classes which is subject of the following result.
\begin{lemma}
\label{Fk measurable class}
Let $\Gamma$ be some nonvoid Borel subset of $\Theta\times\Theta$. If (A1) and (A2) are satisfied, then $\FFF^{\Theta}$, $\FFF^{\Theta}_{\Gamma}$ and $\FFF^{\Theta,2}_{\Gamma}$ are $\MP^{Z}$-measurable classes.
\end{lemma}
\begin{proof}
Under (A1) and (A2), for $n\in\NNN$, $(a_{1},\ldots,a_{n})\in\RRR^{n}$ the processes
\begin{align*}
&
\big((z_{1},\ldots,z_{n}), (\theta,\vartheta)\big)\mapsto \sum_{j=1}^{n}a_{j}~\big[G(\theta,z_{j}) - G(\theta,z_{j})\big]\\
&
\big((z_{1},\ldots,z_{n}), (\theta,\vartheta)\big)\mapsto \sum_{j=1}^{n}a_{j}~\big[G(\theta,z_{j}) - G(\theta,z_{j})\big]^{2}
\end{align*}
on $\RRR^{d n}\times \Gamma$ are measurable w.r.t. the product 
$\cB(\RRR^{d n})\otimes\cB(\Gamma)$ of the Borel $\sigma$-algebra $\cB(\RRR^{d n})$  on $\RRR^{dn}$ and the Borel $\sigma$-algebra $\cB(\Gamma)$ on $\Gamma$ with 
$$
\sup_{(\theta,\vartheta)\in\Gamma}\big|\sum_{j=1}^{n}a_{j}~[G(\theta,z_{j}) - G(\theta,z_{j})]\big|~\vee~ \sup_{(\theta,\vartheta)\in\Gamma}\big|\sum_{j=1}^{n}a_{j}~[G(\theta,z_{j}) - G(\theta,z_{j})]^{2}\big| < \infty%\quad\mbox{for}~(z_{1},\ldots,z_{n})\in\RRR^{dn}.
$$
for $(z_{1},\ldots,z_{n})\in\RRR^{dn}$. Since $\Gamma$ is a Borel subset of a Polish space, the mappings
\begin{align*}
&
\RRR^{dn}\rightarrow\RRR,~(z_{1},\ldots,z_{n})\mapsto \sup_{(\theta,\vartheta)\in\Gamma}\big|\sum_{j=1}^{n}a_{j}~[G(\theta,z_{j}) - G(\theta,z_{j})]\big|\\
&
\RRR^{dn}\rightarrow\RRR,~(z_{1},\ldots,z_{n})\mapsto\sup_{(\theta,\vartheta)\in\Gamma}\big|\sum_{j=1}^{n}a_{j}~[G(\theta,z_{j}) - G(\theta,z_{j})]^{2}\big|
\end{align*}
are random variables on the completion of the probability space $\big(\RRR^{dn},\cB(\RRR^{d n}),(\MP^{Z})^{n}\big)$ for $n\in\NNN$ and $(a_{1},\ldots,a_{n})\in\RRR^{n}$ (see \cite[Example 1.7.5]{vanderVaartWellner1996}). 
\medskip

In exactly the same way we may also verify $\FFF^{\Theta}$ as a $\MP^{Z}$-measurable 
class. This completes the proof.
%In particular, $\FFF_{k}$ is a $\MP^{Z}$-measurable class in the sense of Definition 2.3.3 from \cite{vanderVaartWellner1996} for every $k\in\NNN$. 
\end{proof}
Now, we are ready to show Theorem \ref{main technical result}.
\medskip

\noindent
\textbf{Proof of Theorem \ref{main technical result}:}\\[0.1cm]
Let $\tOFP = \big((\RRR^{d})^{\NNN},\cB(\RRR^{d})^{\otimes\NNN},(\MP^{Z})^{\NNN}\big)$ be the countable product probability space of $\big(\RRR^{d},\cB(\RRR^{d}), \MP^{Z}\big)$. Furthermore for $j\in\NNN$, the mapping $\pi_{j}: (\RRR^{d})^{\NNN}\rightarrow\RRR^{d}$ is defined by $\pi_{j}\big((z_{i})_{i\in\NNN}\big) = z_{j}$.
\smallskip

For any real valued mapping $f$ on $\overline{\Omega}$ the set $\mathcal{E}_{f}$ of all 
mappings $\overline{f}: \overline{\Omega}\rightarrow\RRR\cup\{\infty\}$ satisfying $\overline{f}\geq f$ pointwise and $\{\overline{f} > x\}\in\overline{\cF}$ for $x\in\RRR$ is nonvoid. Moreover, there exists some $f^{*}\in \mathcal{E}_{f}$ such that $f^{*}\leq\overline{f}$ $\overline{\MP}$-a.s. for every $\overline{f}\in \mathcal{E}_{f}$ (see \cite[Lemma 1.2.1]{vanderVaartWellner1996}). It is also known that $(\lambda f)^{*} = \lambda f^{*}$ holds for any $\lambda > 0$ (see \cite[Lemma 1.2.2]{vanderVaartWellner1996}).
\smallskip

Under assumption (A2) the mappings $\big(\sup_{\theta\in\Theta}|\sum_{j=1}^{n}G(\theta,\pi_{j})/n - \EEE_{\overline{\MP}}[G(\theta,\pi_{1})]|\big)^{*}$ have finite values. Then by Markov's inequality
\begin{align*}
&
\overline{\MP}\Big(\Big\{\big(\sup_{\theta\in\Theta}|\frac{1}{n}\sum_{j=1}^{n}G(\theta,\pi_{j}) - \EEE_{\overline{\MP}}[G(\theta,\pi_{1})]|\big)^{*} > \varepsilon\Big\}\Big)
\\
%&
%\leq 
%\EEE_{\overline{\MP}}\Big[\big(\sup_{\theta\in\Theta}|\frac{1}{n}\sum_{j=1}^{n}G(\theta,\pi_{j}) - \EEE_{\overline{\MP}}[G(\theta,\pi_{1})]|\big)^{*}\Big]/\varepsilon\\
& 
\leq 
%\frac{1}{\sqrt{n}}
\EEE_{\overline{\MP}}\Big[\big(\sup_{\theta\in\Theta}|\frac{1}{\sqrt{n}}\sum_{j=1}^{n}G(\theta,\pi_{j}) - \sqrt{n}\EEE_{\overline{\MP}}[G(\theta,\pi_{1})]|\big)^{*}\Big]/(\sqrt{n}\varepsilon)
\quad\mbox{for}~n\in\NNN.
\end{align*}
Since $\FFF^{\Theta}$ is a $\MP^{Z}$-measurable class by Lemma \ref{Fk measurable class}, and since the positive envelope $\xi$ is assumed to be square $\MP^{Z}$-integrable, we may apply Theorem 2.14.1 from \cite{vanderVaartWellner1996} to find a constant $M > 0$ such that 
$$
\EEE_{\overline{\MP}}\Big[\big(\sup_{\theta\in\Theta}|\frac{1}{\sqrt{n}}\sum_{j=1}^{n}G(\theta,\pi_{j}) - \sqrt{n}\EEE_{\overline{\MP}}[G(\theta,\pi_{1})]|\big)^{*}\Big]\leq M \big[1 + \overline{J}(\FFF^{\Theta},\xi,1)\big]~\|\xi\|_{\MP^{Z},2} 
\quad\mbox{for}~n\in\NNN.
$$
In particular, by finiteness of $\overline{J}(\FFF^{\Theta},\xi,1)$, we end up with 
$$
\lim_{n\to\infty}\overline{\MP}\Big(\Big\{\big(\sup_{\theta\in\Theta}|\frac{1}{n}\sum_{j=1}^{n}G(\theta,\pi_{j}) - \EEE_{\overline{\MP}}[G(\theta,\pi_{1})]|\big)^{*} > \varepsilon\Big\}\Big) = 0.
$$
Then in view of Corollary 3.7.9 from \cite{GineNickl2016} statement 1) follows immediately.
\medskip

Concerning statement 2) let for $\delta > 0$ the set $\Gamma_{\delta}$ consist of all $(\theta,\vartheta)\in\Theta\times\Theta$ such that $\|G(\theta,\cdot) - G(\vartheta,\cdot)\|_{\MP^{Z},2} < \delta$. Note that all $G(\theta,\cdot)$ are square $\MP^{Z}$-integrable because the positive envelope $\xi$ of $\FFF^{\Theta}$ is assumed to be square $\MP^{Z}$-integrable. The mapping $\big((\theta,\vartheta),z\big)\mapsto |G(\theta,z) - G(\vartheta,z)|^{2}$ on $(\Theta\times\Theta)\times\RRR^{d}$ is measurable w.r.t. the product $\sigma$-algebra $\cB(\Theta\times\Theta)\otimes\cB(\RRR^{d})$ of the Borel $\sigma$-algebra $\cB(\Theta\times\Theta)$ and the Borel $\sigma$-algebra $\cB(\RRR^{d})$ on $\RRR^{d}$ due to (A1). Furthermore $|G(\theta,\cdot) - G(\vartheta,\cdot)|^{2}$ is $\MP^{Z}$-integrable for $\theta, \vartheta$. Then by Tonelli's theorem, $\phi(\theta,\vartheta) := \|G(\theta,\cdot) - G(\vartheta,\cdot)\|_{\MP^{Z},2}$ defines a 
mapping $\phi:\Theta\times\Theta\rightarrow\RRR$ which is Borel measurable, and thus $\Gamma_{\delta}$ is a Borel subset of $\Theta\times\Theta$. So in view of Lemma \ref{Fk measurable class} the set $\FFF^{\Theta}_{\Gamma_{\delta}}$ is a $\MP^{Z}$-measurable class. We also know from Lemma \ref{Fk measurable class} that $\FFF^{\Theta,2}_{\Theta\times\Theta}$ is a $\MP^{Z}$-measurable class. 
\medskip

Since in addition $\overline{J}(\FFF^{\Theta},\xi,1)$ is finite, we are in the position to apply Theorem 2.5.2 from \cite{vanderVaartWellner1996}. According to this result 
we may conclude immediately the entire statement 2) which completes the proof.
\hfill$\Box$
%the 
%sequence 
%$$
%\Big(\frac{1}{\sqrt{n}}\sum_{j=1}^{n}G(\cdot,Z_{j}) - \EEE[G(\cdot,Z_{1})]\Big)_{n\in\NNN}
%$$
%converges in law to some tight centered Gaussian random element $\frak{G} = \big(\frak{G}_{\theta}\big)_{\theta\in\Theta}$ of $l^{\infty}(\Theta)$ satisfying $\EEE[\frak{G}_{\theta}\cdot \frak{G}_{\vartheta}]$ for $\theta,\vartheta\in\Theta$. In addition the semimetric $\overline{d}$ is totally bounded, an the paths of $\frak{G}$ are uniformly continuous w.r.t. $\overline{d}$ (see \cite[p.41]{vanderVaartWellner1996}). 
\bigskip

Let us turn over to the proof of Theorem \ref{asymptoticDistributionRiskNeutral}. It is a straightforward consequence of Theorem \ref{main technical result} via the functional delta method.
\medskip

\noindent
\textbf{Proof of Theorem \ref{asymptoticDistributionRiskNeutral}:}\\[0.1cm]
In view of the convergence result for $(\sqrt{n}~Y_{n})_{n\in\NNN}$ by Theorem \ref{main technical result} the functional delta method for infimal value mappings on $l^{\infty}(\Theta)$ (see \cite[Proposition 1 with Theorem 1]{Roemisch2006}) yields the claimed convergence of the sequence \eqref{sequence} in Theorem \ref{asymptoticDistributionRiskNeutral}. The remainig part of Theorem \ref{asymptoticDistributionRiskNeutral} is an obvious consequence.  
\hfill$\Box$
%%%%%%%%%%%%%%%%%%%%%%%%%%%%%%%%%%%%%%%%%%%%%%%%%%%%%%%%%%%%%%%%%%%%%%%%%%%%%%%%%%%%%%%%%%%%%%%%%%%%%%%%%%%%%%%%%%%%%%%%%%%%%%%%%%%%%%%%%%%%%%%%%%%%%%%%%%%%%%%%%%%%%%%%%%%%%%%%%%%%%%%%
\subsection{Proof of Proposition \ref{missing link I}}
\label{Beweis missing link I}
%We may assume without loss of generality that $Z$ is also a random vector on $\OFP$, independent of $(Z_{j})_{j\in\NNN}$. 
Let us remind the random processes $X_{n}$ and the mapping $\psi$ introduced at the beginning of this section. For abbreviation we introduce the sequence $(V_{n})_{n\in\NNN}$ of random processes $V_{n}: \Theta\times\Omega\rightarrow\RRR$, defined by
$$
V_{n}(\theta,\omega) =  \frac{1}{n} 
\sum_{j=1}^{n}\Big(G\big(\theta,Z_{j}(\omega)\big) - X_{n}(\omega,\theta)\Big)^{+} - 
\EEE\big[\big(G\big(\theta,Z_{1}\big) - X_{n}(\omega,\theta)\big)^{+}\big].
$$
These random processes are building blocks of the following mappings
$$
W_{n}: \Theta\times\Omega\rightarrow\RRR,~(\theta,\omega)\mapsto \sqrt{n}~\big[V_{n}(\theta,\omega)  
- \frac{1}{n}\sum_{j=1}^{n}\big(G_{1}(\theta,Z_{j}(\omega)) - \EEE\big[G_{1}(\theta,Z_{1})\big]\big)\big]~(n\in\NNN).
$$
The sequence $(U_{n})_{n\in\NNN}$ of mappings $U_{n}: \Theta\times\Omega\rightarrow\RRR$, defined by$$
U_{n}(\theta,\omega) = \sqrt{n}~\EEE\big[\big(G(\theta,Z_{1})- X_{n}(\omega,\theta)\big)^{+} - G_{1}(\theta,Z_{1})\big] + \sqrt{n}~\overline{F}_{\theta}\big(\psi(\theta)\big)~[X_{n}(\omega,\theta) - \psi(\theta)]
$$
will also play an important role.
\smallskip

We start with the following observation 
\begin{align*}
%&\nonumber
\sqrt{n}~\Big[\cR_{\rho_{1,a}}\big(\hat{F}_{n,\theta}\big)_{|\omega} - \frac{1}{n}\sum_{j=1}^{n}\widehat{G}_{1,a}\big(\theta,Z_{j}(\omega)\big) \Big]
%\\
%&\nonumber 
%= 
%a ~W_{n}(\theta,\omega)\\
%&\nonumber %\label{Ausgangspunkt}
%\quad + 
%a \sqrt{n}\EEE\big[\big(G(\theta,Z_{1}) -  
%X_{n}(\omega,\theta)\big)^{+} - G_{1}(\theta,Z_{1}) \big] + a \sqrt{n}~\overline{F}_{\theta}\big(\psi(\theta)\big)~[X_{n}(\omega,\theta) - \psi(\theta)]\\
%&\nonumber
= 
a ~W_{n}(\theta,\omega) + a ~U_{n}(\theta,\omega) 
\end{align*}
for $\omega\in\Omega$ and $\theta\in\Theta$. 
%where in the last step we have used that $F_{\theta}$ is continuous at $\psi(\theta)$ by assumption. 
In particular
\begin{align}
&\nonumber
\sqrt{n}~\Big|\inf_{\theta\in\Theta}\cR_{\rho_{1,a}}\big(\hat{F}_{n,\theta}\big)_{|\omega} - \inf_{\theta\in\Theta}\frac{1}{n}\sum_{j=1}^{n}\widehat{G}_{1,a}\big(\theta,Z_{j}(\omega)\big) \Big|\\
&\label{Ausgangspunkt} 
\leq 
a~\sup_{\theta\in\Theta}|W_{n}(\theta,\omega)| + a~\sup_{\theta\in\Theta}|U_{n}(\theta,\omega)|
\quad\mbox{for}~\omega\in\Omega.
\end{align}
%{\color{magenta}assumptions....}, {\color{magenta} Messbarkeiten nachweisen}
For further preparation, combining Theorem \ref{main technical result} with Egorov's theorem, we may select some sequence $(\Omega_{k})_{k\in\NNN}$ in $\cF$ satisfying
\begin{align}
\label{Egorov}
\MP(\Omega_{k}) \geq 1 - 1/2^{k}\quad\mbox{and}\quad\lim_{n\to\infty}\sup_{\omega\in\Omega_{k}}\sup_{\theta\in\Theta}\big|X_{n}(\omega,\theta) - \psi(\theta)\big| = 0\quad\mbox{for}~ k\in\NNN.
\end{align}
The next result deals with the asymptotics of the first summand in \eqref{Ausgangspunkt}.
\begin{proposition}
\label{eliminating estimations}
Let (A1), (A2) be fulfilled, where the mapping $\xi$ from (A2) is square $\MP^{Z}$-integrable. Furthermore let $F_{\theta}$ be continuous at $\EEE[G(\theta,Z_{1})]$ for every $\theta\in\Theta$.   
%Furthermore, set $\mathcal{I} := \big[- \EEE[\xi(Z_{1}) - 1, \EEE[\xi(Z_{1}) + 1]\big]$.
Using notation \eqref{Entropie-Integral I}, if $\overline{J}(\FFF^{\Theta},\xi,1)$ is finite, then 
$$
\lim_{n\to\infty}\MP^{*}\big(\big\{\sup_{\theta\in\Theta}|W_{n}(\theta,\cdot)| > \varepsilon\big\}\big) = 0\quad\mbox{for}~\varepsilon > 0,
$$
%$$
%\lim_{n\to\infty}\MP^{*}\Big(\Big\{\sqrt{n}\sup_{\theta\in\Theta}\big|V_{n}(\theta,\cdot)  
%- \frac{1}{n}\sum_{j=1}^{n}\big(G_{1}(\theta,Z_{j}) - \EEE\big[G_{1}(\theta,Z)\big]\big)\big| > \varepsilon\Big\}\Big) = 0\quad\mbox{for}~\varepsilon > 0,
%$$
where $\MP^{*}$ denotes the outer probability of $\MP$.
%$$
%\Big\{\sqrt{n}\sup_{\theta\in\Theta}\Big[\overline{X}^{\mathcal{I}}\Big(\big(\theta,X_{n}(\theta,\cdot)\big),\cdot\Big) - \overline{X}^{\mathcal{I}}\Big(\big(\theta,\EEE[G(\theta,Z_{1})]\big),\cdot\Big)\Big]\Big\}_{n\in\NNN}
%$$
%defines a sequence of random variables on $\OFP$ which converges in probability to $0$.
\end{proposition}
\begin{proof}
%By (A 1) and (A 2), for any $n\in\NNN$, the mapping
%$$
%W_{n}: \Theta\times\Omega\rightarrow\RRR,~(\theta,\omega)\mapsto \sqrt{n}~\big|V_{n}(\theta,\omega)  
%- \frac{1}{n}\sum_{j=1}^{n}\big(G_{1}(\theta,Z_{j}(\omega)) - \EEE\big[G_{1}(\theta,Z)\big]\big)\big|
%$$
%is measurable w.r.t. the product $\sigma$-algebra $\cB(\Theta)\otimes\cF$ of $\cB(\Theta)$ and $\cF$, and additionally bounded in $\theta$. Since $\Theta$ is a Polish space, and since $\OFP$ is complete, we obtain $\sup_{\theta\in\Theta}W_{n}(\theta,\cdot)$ as a random variable on $\OFP$.  
%\medskip
%
Let for $\varepsilon > 0$ and $n\in\NNN$ define the set $B_{n}^{\varepsilon}$ to consist of all $\omega\in\Omega$ such that $\sup_{\theta\in\Theta}|W_{n}(\theta,\omega)| > \varepsilon$. We select a sequence $(\Omega_{k})_{k\in\NNN}$ of events as in \eqref{Egorov}. 
By choice of these events it suffices to show $\MP^{*}(B_{n}^{\varepsilon}\cap\Omega_{k})\to 0$ for every $k\in\NNN$. So let us fix any $k\in\NNN$.
\smallskip

Set $\mathcal{I} := \big[-1 - \EEE[\xi(Z_{1})], 1 + \EEE[\xi(Z_{1})]\big]$ and let us remind the mapping $\overline{G}_{\mathcal{I}}$ and random processes $\overline{X}_{n}^{\mathcal{I}}$ defined in Corollary \ref{technical corollary}. Note that 
$V_{n}(\theta,\omega) = \overline{X}_{n}^{\mathcal{I}}\big((\theta,X_{n}(\omega,\theta)),\omega\big)$ holds for $X_{n}(\omega,\theta)\in\mathcal{I}$.
\smallskip

From Corollary \ref{technical corollary} we already know that the sequence $(\sqrt{n}~\overline{X}^{\mathcal{I}}_{n})_{n\in\NNN}$ converges in law to some tight Gaussian random element of $l^{\infty}(\Theta\times\mathcal{I})$. This means 
\begin{equation}
\label{equicontinuity 1}
\lim_{\delta\searrow 0}\limsup_{n\to\infty}\MP^{*}\Big(\Big\{\sup_{(\theta,t),(\vartheta,s)\in \Theta\times\mathcal{I}\atop \overline{d}((\theta,t),(\vartheta,s)) < \delta}\big|\sqrt{n}~\overline{X}^{\mathcal{I}}_{n}\big((\theta,t),\cdot\big)- \sqrt{n}~\overline{X}^{\mathcal{I}}_{n}\big((\vartheta,s),\cdot\big)\big| > \varepsilon\Big\}\Big) = 0,
\end{equation}
where $\overline{d}$ denotes the semimetric on $\Theta\times\mathcal{I}$, defined by
$$
\overline{d}\big((\theta,t),(\vartheta,s)\big) = \sqrt{\vari\Big(\overline{G}_{\mathcal{I}}\big((\theta,t),Z_{1}\big) - \overline{G}_{\mathcal{I}}\big((\vartheta,s),Z_{1}\big)\Big)}
$$
(see e.g. \cite[Example 1.5.10]{vanderVaartWellner1996}).
\smallskip

In view of \eqref{Egorov} there is some $n_{0}\in\NNN$ such that $X_{n}(\omega,\theta)\in\mathcal{I}$ for $\omega\in\Omega_{k}$, $\theta\in\Theta$, and $n\in\NNN$ with $n\geq n_{0}$. This implies
\begin{equation}
\label{equicontinuity 2}
|W_{n}(\theta,\omega)| = \Big|\sqrt{n}~\overline{X}^{\mathcal{I}}_{n}\Big(\big(\theta,X_{n}(\omega,\theta)\big),\omega\Big)- \sqrt{n}~\overline{X}^{\mathcal{I}}_{n}\Big(\big(\vartheta,\psi(\theta)\big),\omega\Big)\Big|
\end{equation}
for $\omega\in\Omega_{k}$, $\theta\in\Theta$, and $n\in\NNN$ with $n\geq n_{0}$. Furthermore by \eqref{Egorov}
\begin{equation}
\label{equicontinuity 3}
\limsup_{n\to\infty}\sup_{\theta\in\Theta}\sup_{\omega\in\Omega_{k}}\overline{d}\Big(\big(\theta,X_{n}(\omega,\theta)\big),\big(\theta,\psi(\theta)\big)\Big)\leq 
\lim_{n\to\infty}\sup_{\theta\in\Theta}\sup_{\omega\in\Omega_{k}}\big|X_{n}(\omega,\theta) - \psi(\theta)\big| = 0.
\end{equation}
Combining \eqref{equicontinuity 1}, \eqref{equicontinuity 2} and \eqref{equicontinuity 3}, we end up with $\MP^{*}(B_{n}^{\varepsilon}\cap\Omega_{k})\to 0$ for $n\to\infty$ which completes the proof.
\end{proof}
Let us turn over to the asymptotics of the second summand in \eqref{Ausgangspunkt}. 
\begin{proposition}
\label{second summand}
Let (A1) - (A3) be fulfilled, where the mapping $\xi$ from (A2) is square $\MP^{Z}$-integrable. Furthermore let $F_{\theta}$ be continuous at $\EEE[G(\theta,Z_{1})]$ for every $\theta\in\Theta$.   
%Furthermore, set $\mathcal{I} := \big[- \EEE[\xi(Z_{1}) - 1, \EEE[\xi(Z_{1}) + 1]\big]$.
Using notation \eqref{Entropie-Integral I}, if $\overline{J}(\FFF^{\Theta},\xi,1)$ is finite, then 
%there is some $\Omega_{\infty}\in\cF$ such that
$$
\lim_{n\to\infty}\MP^{*}\big(\big\{\sup_{\theta\in\Theta}|U_{n}(\theta,\omega)| > \varepsilon\big\}\big) = 0\quad\mbox{for}~\varepsilon > 0,
$$
where $\MP^{*}$ stands for the outer probability of $\MP$.
\end{proposition}
\begin{proof}
Let us remind the sequence $(Y_{n})_{n\in\NNN}$ introduced at the beginning of Section \ref{basic result}, and recall that $\|Y_{n}\|_{\Theta,\infty}$ is a random variable on $\OFP$. We may apply Theorem \ref{main technical result} to conclude that the sequence $(\sqrt{n} Y_{n})_{n\in\NNN}$ converges in law to some tight random element of $l^{\infty}(\Theta)$. Since the norm $\|\cdot\|_{\Theta,\infty}$ is a continuous function on $l^{\infty}(\Theta)$, the application of the continuous mapping theorem for convergence in law (see \cite[Theorem 1.11.1]{vanderVaartWellner1996}) yields that $(\sqrt{n}~ \|Y_{n}\|_{\Theta,\infty})_{n\in\NNN}$ converges weakly to some random variable. In particular, by Prokhorov's theorem, this sequence of random variables is uniformly tight. Hence we may find some strictly increasing sequence $(a_{k})_{k\in\NNN}$ of positive real numbers such that the inequality $\MP(\{\sqrt{n}~\|Y_{n}\|_{\Theta,\infty}\leq a_{k}\})\geq 1 - 1/2^{k}$ holds for $k,n\in\NNN$.
\smallskip

For $k,n\in\NNN$, $\theta\in\Theta$ and $\omega\in A_{kn} := \{\sqrt{n}~\|Y_{n}\|_{\Theta,\infty}\leq a_{k}\}$ we have
\begin{align}
&\nonumber
|U_{n}(\theta,\omega)|\\
&\nonumber
= 
\sqrt{n}~\big|\EEE\big[\big(G(\theta,Z_{1}) - \psi(\theta) - Y_{n}(\theta,\omega)\big)^{+}- \big(G(\theta,Z_{1}) - \psi(\theta)\big)^{+}\big] + \overline{F}_{\theta}\big(\psi(\theta)\big) Y_{n}(\theta,\omega)\big|\\
&\nonumber
= 
\sqrt{n}~\int_{- Y_{n}(\theta,\omega)^{-}}^{0}\hspace*{-1cm}\big[\overline{F}_{\theta}\big(\psi(\theta) + t\big) - \overline{F}_{\theta}\big(\psi(\theta)\big)\big]~dt 
~-~ 
\sqrt{n}~\int^{Y_{n}(\theta,\omega)^{+}}_{0}\hspace*{-1cm}\big[\overline{F}_{\theta}\big(\psi(\theta) + t\big) - \overline{F}_{\theta}\big(\psi(\theta)\big)\big]~dt\\ 
&\nonumber
= 
\sqrt{n}~\int^{Y_{n}(\theta,\omega)^{-}}_{0}\hspace*{-1cm}\big[\overline{F}_{\theta}\big(\psi(\theta) - t\big) - \overline{F}_{\theta}\big(\psi(\theta)\big)\big]~dt 
~-~ 
\sqrt{n}~\int^{Y_{n}(\theta,\omega)^{+}}_{0}\hspace*{-1cm}\big[\overline{F}_{\theta}\big(\psi(\theta) + t\big) - \overline{F}_{\theta}\big(\psi(\theta)\big)\big]~dt\\ 
&\nonumber 
\leq 
\sqrt{n}~\int^{a_{k}/\sqrt{n}}_{0}\big[F_{\theta}\big(\psi(\theta) + t\big) - F_{\theta}\big(\psi(\theta) - t\big)\big]~dt\\ 
&\label{Integraldarstellung}
=
\int_{0}^{a_{k}}\big[F_{\theta}\big(\psi(\theta) + s/\sqrt{n}\big) - F_{\theta}\big(\psi(\theta) - s/\sqrt{n}\big)\big]~ds =: b_{k,n}(\theta),
\end{align} 
where in the last step we have used the change of variable formula.
\smallskip

Next, let $(\theta_{k,n})_{n\in\NNN}$ be a sequence in $\Theta$ satisfying $b_{k,n}(\theta_{k,n}) > \sup_{\vartheta\in\Theta}b_{k,n}(\vartheta) - 1/n$ for every $n\in\NNN$. By compactness of $\Theta$ any subsequence $(\theta_{k,i(n)})_{n\in\NNN}$ of $(\theta_{k,n})_{n\in\NNN}$ has a further subsequence $(\theta_{k,\varphi(i(n))})_{n\in\NNN}$ which converges to some $\theta\in\Theta$ w.r.t. the Euclidean metric. The mapping $\psi$ is continuous under assumptions (A1) - (A3) due to Lemma \ref{basic observations}. Hence $\psi(\theta_{k,\varphi(i(n))}) + s/\sqrt{\varphi(i(n))}\to \psi(\theta)$ and $\psi(\theta_{k,\varphi(i(n))})  - s/\sqrt{\varphi(i(n))}\to \psi(\theta)$ for $s\in [0,a_{k}]$. 
Since $F_{\theta}$ is assumed to be continuous at $\psi(\theta)$, we may invoke Lemma \ref{continuity convergence} to end up with 
$\big[F_{\theta_{k,\varphi(i(n))}}\big(\psi(\theta_{k,\varphi(i(n))}) + s/\sqrt{\varphi(i(n))}\big)- F_{\theta_{k,\varphi(i(n))}}\big(\psi(\theta_{k,\varphi(i(n))}) - s/\sqrt{\varphi(i(n))}\big)\big]\to 0$ for $s\in [0,a_{k}]$. Then by dominated convergence theorem $b_{k,\varphi(i(n))}(\theta_{\varphi(i(n))})\to 0$, and thus $\sup_{\vartheta\in\Theta}b_{k,\varphi(i(n))}(\vartheta)\to 0$ due to the choice of the 
sequence $(\theta_{n})_{n\in\NNN}$. Therefore we may conclude
\begin{equation}
\label{Konvergenz Maxima}
\lim_{n\to\infty}\sup_{\vartheta\in\Theta}b_{k,n}(\vartheta) = 0\quad\mbox{for}~k\in\NNN.
\end{equation}
Putting \eqref{Integraldarstellung} and \eqref{Konvergenz Maxima} together we may conclude
\begin{align*}
\limsup_{n\to\infty}\MP^{*}\big(\big\{\sup_{\theta\in\Theta}|U_{n}(\theta,\cdot)| > \varepsilon\big\}\big)\leq 
\limsup_{n\to\infty}\MP^{*}\big(\big\{\sup_{\theta\in\Theta}|U_{n}(\theta,\cdot)| > \varepsilon\big\}\cap A_{kn}\big) + 1/2^{k} = 1/2^{k}
\end{align*}
for $\varepsilon > 0$ and $k\in\NNN$. This completes the proof.
\end{proof}
Now we are ready to show Proposition \ref{missing link I}.
\medskip

\noindent
\textbf{Proof of Proposition \ref{missing link I}:}\\[0.1cm]
Putting \eqref{Ausgangspunkt} together with Proposition \ref{eliminating estimations} and Proposition \ref{second summand}, we obtain for any $\varepsilon > 0$
\begin{align*}
&
\limsup_{n\to\infty}\MP^{*}\Big(\Big\{\big|\sqrt{n}\inf_{\theta\in\Theta}\cR_{\rho_{1,a}}\big(\hat{F}_{n,\theta}\big) - \sqrt{n}\inf_{\theta\in\Theta}\frac{1}{n}\sum_{j=1}^{n}\widehat{G}_{1,a}(\theta,Z_{j})\big| > \varepsilon\Big\}\Big)\\
&\leq
\limsup_{n\to\infty}\MP^{*}\big(\big\{\sup_{\theta\in\Theta}|W_{n}(\theta,\cdot)| + \sup_{\theta\in\Theta}|U_{n}(\theta,\cdot)| > \varepsilon/a\big\}\big)\\ 
&\leq 
\limsup_{n\to\infty}\big[\MP^{*}\big(\big\{\sup_{\theta\in\Theta}|W_{n}(\theta,\cdot)|  > \varepsilon/(2 a)\big\}\big)  + \MP^{*}\big(\big\{ \sup_{\theta\in\Theta}|U_{n}(\theta,\cdot)| > \varepsilon/(2 a)\big\}\big)\big] = 0.
\end{align*}
This completes the proof. 
\hfill$\Box$
%%%%%%%%%%%%%%%%%%%%%%%%%%%%%%%%%%%%%%%%%%%%%%%%%%%%%%%%%%%%%%%%%%%%%%%%%%%%%%%%%%%%%%%%%%%%%%%%%%%%%%%%%%%%%%%%%%%%%%%%%%%%%%%%%%%%%%%%%%%%%%%%%%%%%%%%%%%%%%%%%%
%%%%%%%%%%%%%%%%%%%%%%%%%%%%%%%%%%%%%%%%%%%%%%%%%%%%%%%%%%%%%%%%%%%%%%%%%%%%%%%%%%%%%%%%%%%%%%%%%%%%%%%%%%%%%%%%%%%%%%%%%%%%%%%%%%%%%%%%%%%%%%%%%%%%%%%%%%%%%%%%%%%%%%%%%%%%%%%%%%%%%%
%%%%%%%%%%%%%%%%%%%%%%%%%%%%%%%%%%%%%%%%%%%%%%%%%%%%%%%%%%%%%%%%%%%%%%%%%%%%%%%%%%%%%%%%%%

\begin{appendix}
\section{Appendix}
\label{certainty equivalents}
Throughout this section we shall us assumptions and notations from Section \ref{generalized divergence risk measures}.
%let us fix any $x_{0} > 1$ from the effective domain of $\Phi$.  
%for any distribution function $F$ we shall use notation $\oF := 1 - F$, and $\Flinks, \Frechts$ to denote respectively the left-continuous and right-continuous quantile function of $F$. 
%Furthermore, $\Phi^{*'}_{-}$ and $\Phi^{*'}_{+}$ stand for the left-sided and right-sided derivative of $\Phi^{*}$ respectively.
\smallskip

Let us introduce the \textit{function of certainty equivalents} defined by
$$
\varphi_{\Phi^{*}}: \FFF_{\Phi^{*}}\times\RRR\rightarrow\RRR,~(F,x)\mapsto \int_{0}^{1}\eins_{]0,1[}(u)~ \Phi^{*}\big(\Flinks(u) + x\big)~du - x
$$
where $\Flinks$ denotes the left-continuous quantile function of $F$. Note that this mapping is well-defined because $F\in\FFF_{\Phi^{*}}$ if and only if $\Flinks(U)\in H^{\Phi^{*}}$ for any random variable $U$ on the atomless probability space $\OFP$ which is uniformly distributed on $]0,1[$. Moreover, it satisfies the following basic properties.
%This mapping satisfies the following basic properties
\begin{proposition}
\label{function of certainty equivalents basic properties}
For $F\in \FFF_{\Phi^{*}}$ the mapping $\varphi_{\Phi^{*}}(F,\cdot)$ is convex, and 
the set 
%$\textrm{argmin}\big(\varphi_{\Phi^{*}}(F,\cdot)\big)$ 
of minimizers of $\varphi_{\Phi^{*}}(F,\cdot)$ is a nonvoid compact interval in $\R$. 
\end{proposition}
\begin{proof}
Let us fix any $F\in\FFF_{\Phi^{*}}$. Convexity of $\varphi_{\Phi^{*}}(F,\cdot)$ is obvious due to convexity of $\Phi^{*}$. Moreover by the assumptions on $\Phi$ we already know that $\Phi^{*}$ fulfills
$$
\lim_{x\to -\infty}\big[\Phi^{*}(x) - x\big] = \lim_{x\to \infty}\big[\Phi^{*}(x) - x\big] = \infty
$$
(see \cite[Lemma A.1]{BelomestnyKraetschmer2016}). Furthermore $\Flinks$ is integrable w.r.t. the uniform distribution on $]0,1[$ because $F\in\F_{\Phi^{*}}$. Then the application of Jensen's inequality yields
\begin{eqnarray*}
&&
\lim_{x\to -\infty}\varphi_{\Phi^{*}}(x,F)
\geq\lim_{x\to -\infty} \left[\Phi^{*}\left(\int_{0}^{1}\eins_{]0,1[}(u)\Flinks(u)~du + x\right) - x\right]
= \infty.\\
&& 
\lim_{x\to \infty} \varphi_{\Phi^{*}}(x,F) 
\geq\lim_{x\to \infty} \left[\Phi^{*}\left(\int_{0}^{1}\eins_{]0,1[}(u)\Flinks(u)~du + x\right) - x\right]
= \infty.
\end{eqnarray*}
This shows that any level set $\varphi_{\Phi^{*}}(F,\cdot)^{-1}(]-\infty,\gamma])$ is a bounded subset of $\R$. Moreover, these sets are also closed because the mapping $\varphi_{\Phi^{*}}(F,\cdot)$ is convex and thus continuous. Hence all these level sets are compact subsets of $\R$, and in particular the set of minimizers 
%$\textrm{argmin}\big(\varphi_{\Phi^{*}}(F,\cdot)\big)$ 
is a nonvoid compact subset of $\R$ due to continuity of $\varphi_{\Phi^{*}}(F,\cdot)$. Finally, the set of minimizers 
%$\textrm{argmin}\big(\varphi_{\Phi^{*}}(F,\cdot)\big)$ 
is also convex since $\varphi_{\Phi^{*}}(F,\cdot)$ is a convex mapping. This completes the proof.
\end{proof}
The next result provides a criterion, dependent on $\Phi$ and $\Phi^{*}$ only, to ensure that a mapping $\varphi_{\Phi^{*}}(F,\cdot)$ has a unique minimizer.
\begin{proposition}
\label{unique minimizer}
Let $\Phi(0) = 0$, and let $\Phi^{*}$ be strictly convex on $]0,\infty[$. Then the mapping $\varphi_{\Phi^{*}}(F,\cdot)$ has a unique minimizer for every $F\in\F_{\Phi^{*}}$.
\end{proposition}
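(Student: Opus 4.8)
The plan is to argue by contradiction, using the explicit one-sided derivative formula for $\varphi_{\Phi^{*}}(F,\cdot)$ supplied by Proposition \ref{function of certainty equivalents basic properties} together with the strict monotonicity of $\Phi^{*'}_{+}$ that strict convexity forces. First I would record two elementary consequences of the hypotheses. Since $\Phi(0)=\inf_{x\geq 0}\Phi(x)=0$, for every $y\leq 0$ each term $xy-\Phi(x)$ with $x\geq 0$ is nonpositive and vanishes at $x=0$, so $\Phi^{*}(y)=0$ on $]-\infty,0]$; in particular $\Phi^{*'}_{+}(y)=0$ for $y<0$. Strict convexity of $\Phi^{*}$ on $]0,\infty[$ (together with $\Phi^{*}(0)=0$) then yields the key monotonicity fact that
\[
s<t \text{ and } t>0 \quad\Longrightarrow\quad \Phi^{*'}_{+}(s)<\Phi^{*'}_{+}(t),
\]
which one checks by distinguishing the cases $s\geq 0$ and $s<0$, and using that $\Phi^{*'}_{+}(t)\geq \Phi^{*}(t)/t>0$ for $t>0$.

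Next I would set up the contradiction. Suppose $\varphi_{\Phi^{*}}(F,\cdot)$ had two distinct minimizers. By statement 1) of Proposition \ref{function of certainty equivalents basic properties} the minimizer set is an interval, hence contains a nondegenerate subinterval $[a,b]$ with $a<b$ on which $\varphi_{\Phi^{*}}(F,\cdot)$ is constant. Writing $D(x):=\int_{0}^{1}\eins_{]0,1[}(u)\,\Phi^{*'}_{+}\big(\Flinks(u)+x\big)\,du$, statement 2) gives $\varphi_{\Phi^{*}}(F,\cdot)'_{+}(x)=D(x)-1$, and since the function is constant on $[a,b]$ its right derivative vanishes at every point of $[a,b[$. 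In particular $D(a)=D(m)=1$ for $m:=(a+b)/2$.

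Finally I would derive the contradiction from $D(a)=D(m)$. Because $a<m$ and $\Phi^{*'}_{+}$ is nondecreasing, the integrand satisfies $\Phi^{*'}_{+}(\Flinks(u)+a)\leq\Phi^{*'}_{+}(\Flinks(u)+m)$ for all $u$, and equality of the two integrals forces $\Phi^{*'}_{+}(\Flinks(u)+a)=\Phi^{*'}_{+}(\Flinks(u)+m)$ for Lebesgue-almost every $u\in\,]0,1[$. By the key monotonicity fact above, such equality with $a<m$ is only possible when $\Flinks(u)+m\leq 0$, hence $\Flinks(u)+a<0$, for almost every $u$; but then $\Phi^{*'}_{+}(\Flinks(u)+a)=0$ almost everywhere, whence $D(a)=0$, contradicting $D(a)=1$. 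Therefore the minimizer set cannot contain a nondegenerate interval, and the minimizer is unique.

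The only delicate points are at the boundary value $0$: one must verify that strict convexity on the open half-line $]0,\infty[$ still yields strict increase of $\Phi^{*'}_{+}$ across $0$ and positivity of $\Phi^{*'}_{+}$ on $]0,\infty[$, and that the passage from equal integrals of pointwise-ordered functions to almost-everywhere equality of the integrands is legitimate. Both are routine, so the monotonicity fact in the first paragraph is where the real content lies; everything downstream is bookkeeping.
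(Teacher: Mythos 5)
Your proof is correct and follows essentially the same route as the paper's: a contradiction argument built on the one-sided derivative formula of Proposition \ref{function of certainty equivalents basic properties}, the identity $\Phi^{*}\equiv 0$ on $]-\infty,0]$ (hence vanishing derivative there) forced by $\Phi(0)=0$, and the strict monotonicity of the derivative of $\Phi^{*}$ on $]0,\infty[$ forced by strict convexity. The only cosmetic differences are that the paper works with left derivatives at two differentiability points of the minimizing interval and gets the contradiction from a strict inequality between two integrals both equal to $1$, whereas you use right derivatives (which always exist, so no differentiability points need to be selected) and conclude via almost-everywhere equality of the integrands; both variants are sound.
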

\begin{proof}
Let us fix any $F\in\F_{\Phi^{*}}$. We have $\Phi^{*}(y) = 0$ for every $y \leq 0$ because $\Phi(0) = 0$. Hence $\Phi^{*}(y)\geq 0$ for $x\in\RRR$, and $\Phi^{*}\big(\Flinks(u) + x\big) = 0$ if $u < F(-x)$. In particular
\begin{equation}
\label{reduzierte Darstellung}
\varphi_{\Phi^{*}}(F,x) = \int_{0}^{1}\eins_{]F(-x),1[}(u)~\Phi^{*}\big(\Flinks(u) +x\big)~du - x\quad\mbox{for}~x\in\RRR.
\end{equation}
By way of contradiction let us assume that there are at least two minimizers of $\varphi_{\Phi^{*}}(F,\cdot)$. In view of Proposition \ref{function of certainty equivalents basic properties} the set of minimizers is an interval. Since by \eqref{reduzierte Darstellung} we have $\varphi_{\Phi^{*}}(F,x) = - x$ if $F(-x) = 1$, we may find minimizers $x_{0}, x_{1}$ with $x_{0} < x_{1}$ such that 
$F(-x_{1})\leq F(-x_{0}) < 1$. Furthermore $x_{2} := (x_{0} + x_{1})/2$ is a minimizer too, implying $\varphi_{\Phi^{*}}(F,x_{2}) = [\varphi_{\Phi^{*}}(F,x_{0}) + \varphi_{\Phi^{*}}(F,x_{1})]/2$. Since $\Phi^{*}$ is nonnegative and convex, we may conclude from \eqref{reduzierte Darstellung}
\begin{align*}
0 
&\geq 
\int_{0}^{1}\eins_{]F(-x_{2}),1[}(u)~\big[\Phi^{*}\big(\Flinks(u) +x_{2}\big) - \sum_{i=0}^{1}\Phi^{*}\big(\Flinks(u) +x_{i}\big)/2\big]~du\\
&= 
\frac{1}{2}~\int_{0}^{1}\hspace*{-0.2cm}\big[\eins_{]F(-x_{1}),F(-x_{2})[}(u)~\Phi^{*}\big(\Flinks(u) +x_{1}\big)
%~du 
- 
%\int_{0}^{1}
\eins_{]F(-x_{2}),F(-x_{0})[}(u)~\Phi^{*}\big(\Flinks(u) +x_{0}\big)\big] ~du\\
&= 
\frac{1}{2}~\int_{0}^{1}\eins_{]F(-x_{1}),F(-x_{2})[}(u)~\Phi^{*}\big(\Flinks(u) +x_{1}\big)
~du \geq 0.
\end{align*}
Then by convexity of $\Phi^{*}$ again,  
$
\sum_{i=0}^{1}\Phi^{*}\big(\Flinks(u) +x_{i}\big)/2 = \Phi^{*}\big(\Flinks(u) +x_{2}\big)
$
for almost all $u\in ]F(-x_{0}),1[$. This contradicts strict convexity of $\Phi^{*}$ on $]0,\infty[$ because $\Flinks(u) +x_{i} > 0$ for $i\in\{0,1,2\}$ and $u\in ]F(-x_{0}),1[$. Therefore, $\varphi_{\Phi^{*}}(F,\cdot)$ has at most one minimizer. This completes the proof because the set of minimizers is nonvoid due to Proposition \ref{function of certainty equivalents basic properties}. 
\end{proof}
\section{Appendix}
\label{AppendixB}
Let $\frak{G} = (\frak{G}_{t})_{t\in\TTT}$ be some centered Gaussian process on some probability space $\tOFP$ with intrinsic semimetric $d_{\TTT}$ on $\TTT$, defined by 
$d_{\TTT}(t,\overline{t}) = \sqrt{\vari(\frak{G}_{t} - \frak{G}_{\overline{t}})}$. We assume $\TTT = \bigcup_{k\in\NNN}\TTT_{k}$ for some isotone sequence $(\TTT_{k})_{k\in\NNN}$ of $d_{\TTT}$-totally bounded subsets of $\TTT$. Then we find an isotone sequence $(\overline{\TTT}_{k})_{k\in\NNN}$ of at most countable sets such that $\overline{\TTT}_{k}\subseteq\TTT_{k}$ is dense w.r.t. $d_{\TTT}$ for any $k\in\NNN$. Set $\overline{\TTT} := \bigcup_{k\in\NNN}\overline{\TTT}_{k}$, and note that this set is $d_{\TTT}$-dense.
\smallskip

The aim of this section is to find a convenient criterion which guarantees that $\frak{G}$ has version which is $d_{\TTT}$-uniformly continuous on every $\TTT_{k}$. Our result is based on the following auxiliary consideration.
\begin{lemma}
\label{extension result}
If a mapping $f:\overline{\TTT}\rightarrow\RRR$ is $d_{\TTT}$-uniformly continuous on $\overline{\TTT}_{k}$ for every $k\in\NNN$, then there exists a unique extension $\hat{f}:\TTT\rightarrow\RRR$ of $f$ which is uniformly continuous w.r.t. $d_{\TTT}$ on $\TTT_{k}$ for every $k\in\NNN$.
\end{lemma}
\begin{proof}
It is known that every uniformly continuous mapping on a semimetric space may be extended uniquely to a uniformly continuous mapping on the completion of the semimetric space 
(see \cite[Theorem 11.3.4 with Theorem 9.2.2]{Wilansky1970}). Since $\overline{\TTT}_{k}$ is a $d_{\TTT}$-dense subset of $\TTT_{k}$ both spaces have the same $d_{\TTT}$-completion for $k\in\NNN$. Hence we may have for any $k\in\NNN$ a unique $d_{\TTT}$-uniformly continuous extension $f_{k}:\TTT_{k}\rightarrow\RRR$ of the restriction $f_{|\overline{\TTT}_{k}}$. Since $\TTT_{k}\subseteq\TTT_{k+1}$ we may observe that $f_{k}$ and $f_{k+1}$ coincide on $\TTT_{k}$ for $k\in\NNN$. Then we may define the mapping $\hat{f}:\TTT\rightarrow\RRR$ by $\hat{f}(t) = f_{k}(t)$ if $t\in\TTT_{k}\setminus\TTT_{k-1}$ for some $k\in\NNN$, where $\TTT_{0} := \emptyset$. This mapping is as required.
\end{proof}
We may now formulate the announced criterion on continuous modifications of $\frak{G}$.
\begin{proposition}
\label{continuous modification}
Let $\cU^{k}_{\delta}$ be the set of all $(t,\overline{t})\in\overline{\TTT}_{k}\times\overline{\TTT}_{k}$ such that $d_{\TTT}(t,\overline{t}) < \delta$ for $k\in\NNN$ and $\delta > 0$. If $\lim\limits_{\delta\searrow 0}\EEE\big[\sup_{(t,\overline{t})\in\cU^{k}_{\delta}}|\frak{G}_{t} - \frak{G}_{\overline{t}}|\big] = 0$ for $k\in\NNN$, then there exists a version $\widehat{\frak{G}} = (\widehat{\frak{G}}_{t})_{t\in\TTT}$ of $\mathfrak{G}$ whose paths are uniformly continuous w.r.t. $d_{\TTT}$ on $\TTT_{k}$ for any $k\in\NNN$.
\end{proposition}
\begin{proof}
We are in the position to draw on Theorem 4 from \cite{KuelbsEtAl2013} to select some sequence $(\overline{\Omega}_{k})_{k\in\NNN}$ in $\overline{\cF}$ such that $\overline{\MP}(\overline{\Omega}_{k}) = 1$, and $\mathfrak{G}_{\cdot}(\overline{\omega})$ is uniformly continuous w.r.t. $d_{\TTT}$ on $\overline{\TTT}_{k}$ for $\overline{\omega}\in\overline{\Omega}_{k}$. Define the event $\widehat{\Omega} := \bigcap_{k=1}^{\infty}\overline{\Omega}_{k}\in\overline{\cF}$ which satisfies $\overline{\MP}(\widehat{\Omega}) = 1$. Then by Lemma \ref{extension result} there exists for any $\overline{\omega}\in\widehat{\Omega}$ a unique mapping $\widehat{\frak{G}}_{\cdot}(\overline{\omega}):\TTT\rightarrow\RRR$ which is $d_{\TTT}$-uniformly continuous on $\TTT_{k}$ for $k\in\NNN$ such that $\widehat{\frak{G}}_{t}(\overline{\omega}) = \frak{G}_{t}(\overline{\omega})$ for $t\in\overline{\TTT}$. Then, setting $\widehat{\frak{G}}_{t}(\overline{\omega}) := 0$ for $\overline{\omega}\in\overline{\Omega}\setminus\widehat{\Omega}$, we have constructed a stochastic process $\widehat{\frak{G}} = (\widehat{\frak{G}}_{t})_{t\in\TTT}$ on $\tOFP$ which has $d_{\TTT}$-continuous paths on $\TTT_{k}$ for $k\in\NNN$. It is remains to show that it is a version of $\frak{G}$. 
\smallskip

Let $t\in\TTT$. If $t\in\overline{\TTT}$, then by construction $\frak{G}_{t}$ and $\widehat{\frak{G}}_{t}$ coincide on $\widehat{\Omega}$ so that they differ on a $\overline{\MP}$-null set only. So let $t\in\TTT\setminus\overline{\TTT}$. Then $t\in\TTT_{k}$ for some $k\in\NNN$ and $d_{\TTT}(t,t_{l})\to 0$ for a sequence $(t_{l})_{l\in\NNN}$ in $\overline{\TTT}_{k}$. Since $d_{\TTT}(t,t_{l})^{2} = \EEE[(\frak{G}_{t} - \frak{G}_{t_{l}})^{2}]$ for $l\in\NNN$ we may conclude from Vitalis' theorem (see \cite[Proposition 21.4]{Bauer2001}) that $\frak{G}_{t_{l}}\to\frak{G}_{t}$ in probability. This implies that $\widehat{\frak{G}}_{t_{l}}\to\frak{G}_{t}$ in probability because $\widehat{\frak{G}}_{t_{l}} = \frak{G}_{t_{l}}$ $\overline{\MP}$-a.s. for $l\in\NNN$. Moreover, $\widehat{\frak{G}}_{t_{l}}\to\widehat{\frak{G}}_{t}$ in probability by path-continuity of $\widehat{\frak{G}}$. Hence $\overline{\MP}(\{|\widehat{\frak{G}}_{t} - \frak{G}_{t}| > \varepsilon\}) = 0$ for any $\varepsilon > 0$, and thus $\overline{\MP}(\{|\widehat{\frak{G}}_{t} - \frak{G}_{t}| > 0\}) = 0$. This completes the proof.
\end{proof}
\end{appendix}
\section*{Acknowledgments}
The author thanks two anonymous referees for useful comments and suggestions which have helped to improve an earlier draft.

\section*{Declarations}

\textbf{Conflicts of interest:} The author has no competing interests to declare that are relevant to the content of this article.

\medskip\noindent
\textbf{Data Availability Statement:} No datasets were generated or analysed during the study

\medskip\noindent
\textbf{Funding:} The authors did not receive support from any organization for the submitted work.

%%%%%%%%%%%%%%%%%%%%%%%%%%%%%%%%%%%%%%%%%%%%%%%%%%%%%%%%%%%%%%%%%%%%%%%%%%%%%%%%%%%%%%%%%%%%%%%%%%%%%%%%%%%%%%%%%%%%%%%%%%%%%%%%%%%%%%%%%%%%%%%%%%%%%%%%%%%%%%%%%%%%%%%%%%%%%%%%%


\begin{thebibliography}{0}
%\bibitem{AcerbiTasche2002} Acerbi, C. and D. Tasche (2002). {\it On the coherence of expected shortfall}. Journal of Banking and Finance {\bf 26}, 1487 -- 1503. 
%\bibitem{AliprantisBorder2006} Aliprantis, C. D. and K. C. Border (2006). {\it Infinite dimensional analysis}. Springer, Berlin et al. (3rd ed.).
%\bibitem{Anantharaman2012} Anantharaman, R. (2012). {\it Thin subspaces of $L^{1}(\lambda)$}. Quaestiones Mathematicae {\bf 35},  133 -- 143.
%\bibitem{AndersenBroadie2004} Andersen, L. and  Broadie, M. (2004). {\it A
%primal-dual simulation algorithm for pricing multidimensional American
%options.} Management Sciences, \textbf{50}(9), 1222-1234.
%\bibitem{AnkirchnerStrack2011} Ankirchner, S. and Strack, P. (2011). {\it Skorokhod embeddings in bounded time.} Stochastics and Dynamics {\bf 11}, 215--226.
%\bibitem[Artzner et al.(1999)]{Artz} P. Artzner, F. Delbaen, J.-M. Eber and D. Heath (1999). Coherent measures of risk.
\bibitem{Bauer2001} Bauer, H. (2001) \textit{Measure and integration theory}. de Gruyter, Berlin.
%\textit{Math. Finance} {\bf 9}, 203-228.
%\bibitem{BaxterChacon1977} Baxter, J. R. and R. V. Chacon (1977). 
%{\it Compactness of stopping times.} Z. Wahrscheinlichkeitstheorie verw. Gebiete {\bf 40}, 169 -- 181.
%\bibitem{BayraktarKaratzasYao2010} Bayraktar, E., Karatzas, I. and Yao, S. (2010). {\it Optimal stopping for dynamic convex risk measures.} Illinois J. Math. {\bf 54}, 1025 -- 1067.
%\bibitem{BayraktarYao2011I} Bayraktar, E. and Yao, S. (2011). 
%{\it Optimal stopping for non-linear expectations.} Stochastic Process. Appl. {\bf 121}, 185 -- 211. 
%\bibitem{BayraktarYao2011II} Bayraktar, E. and Yao, S. (2011). 
%{\it Optimal stopping for non-linear expectations.} Stochastic Process. Appl. {\bf 121}, 212 -- 264.
%\bibitem{BartlTangpi2020} Bartl, D. and Tangpi, L. (2020). \textit{Non-asymptotic rates for the estimation of risk measures}. arXiv:2003.10479.
%\bibitem{Beer1993} Beer, G. (1993). \textsl{Topologies on closed and closed convex sets}. Kluwer, Dordrecht et al..
%\bibitem{Belomestny2013} Belomestny, D.  (2013). 
%{\it Solving optimal stopping problems by empirical dual optimization.} Annals of Applied Probability, \textbf{23}(5), 1988--2019.
\bibitem{BelomestnyKraetschmer2016} Belomestny, D. and Kr\"atschmer, V. (2016). \textit{Optimal stopping under model uncertainty: A randomized stopping times approach.} Annals of Applied Probability \textbf{26}, 1260--1295.
\bibitem{Ben-TalTeboulle1987} Ben-Tal, A. and Teboulle, M. (1987). \textit{Penalty functions and duality in stochastic programming via $\phi-$divergence functionals.} Math. Oper. Research \textbf{12}, 224 -- 240.
\bibitem{Ben-TalTeboulle2007} Ben-Tal, A. and Teboulle, M. (2007). \textit{An old-new concept of convex risk measures: the optimized certainty equivalent.} Math. Finance  \textbf{17}, 449 -- 476.
%\bibitem{BeutnerZaehle2016} Beutner, E. and Z\"ahle, H. (2016). Functional delta-method for the bootstrap of quasi-Hadamard differentiable funct {\em Electronic Journal of Statistics} 10, 1181--1222.
%\bibitem{Billingsley1968} Billingsley, P. (1968). \textsl{Convergence of Probability Measures}. Wiley, New York.
%\bibitem{BlairJeroslow1977} Blair, C. E. and Jeroslow, R. G. (1977). {\em The value function of a mixed integer program: I}. Discrete Mathematics \textbf{19}, 121--138.
%\bibitem{BrownPurves1973} Brown, L. D. and Purves, R. (1973). {\em Measurable selections of extrema}. Annals of Statistics \textbf{1}, 902--912.
%\bibitem{BuldyginKozachenko2000} Buldygin, V. V. and Kozachenko, Yu. V. (2000). {\em Metric characterization of random variables and random processes.} American Mathematical Society, Providence, Rhode Island.
%\bibitem{ChernozhukovEtAl2014}
%Chernozhukov, V., Chetverikov, D. and Kato, K. (2014). \textit{Gaussian approximation of suprema of empirical processes}. Annals  of Statistics \textbf{42}, 1564--1597.
\bibitem{ClausKraetschmerSchultz2017} Claus, M., Kr\"atschmer, V. and Schultz, R. (2017). 
Weak continuity of risk functionals with applications to stochastic programming, \textit{SIAM J. OPTIM.} 27, 91--109.
\bibitem{Cohn1980} Cohn, D. L. (1980). \textit{Measure theory}. Birkh\"auser, Boston/Basel/Stuttgart.
%\bibitem{Cuesta-AlbertosRuschendorfTuero-Diaz1993}J.A. Cuesta-Albertos, J. A.,  R\"uschendorf, Tuero-Diaz, A. (1993). {\em Optimal coupling of multivariate distribu-
%tions and stochastic processes}. J. Multivar. Anal. \textbf{46}, 335--361.
%\bibitem{Bradley2002} Bradley, R.C. (2002). Introduction to strong mixing conditions, Volume 1. \textit{Technical Report, Department of Mathematics, I.\,U.\,Bloomington}.
%\bibitem{Cerreia-VioglioEtAl.2011} Cerreia-Vioglio, S., Maccheroni, F., Marinacci, M. and 
%Montrucchio, L. (2011). \textit{Risk measures: rationality and diversification.} Mathematical Finance \textbf{21}, 743--774.
%arXiv: 1405.2240 (2014).
%\bibitem{ChengRiedel2013} Cheng, X. and Riedel, F. (2013). {\it Optimal stopping under ambiguity in continuous time.} Math. Financ. Econ. {\bf 7}, 29 -- 68. 
%\bibitem{CheriditoLi2009} Cheridito, P. and Li, T. (2009). {\it Risk measures on Orlicz hearts.} {Mathematical Finance} {\bf 19}, 189--214.
%\bibitem{ChurchMahowald1962} Church, P. T. and Mahowald, M. E. (1962). {\it Measure preserving functions on locally compact spaces}, Math. Zeitschr. {\bf 80}, 194--199.
%    \bibitem{Dede2009} Dede, S. (2009). An empirical central limit theorem in $\mathbf{L}^1$ for stationary sequences. \textit{Stochastic Processes and their Applications}\textbf{119}, 3494--3515.
%    \bibitem{DedeckerPrieur2005} Dedecker, J. and Prieur, C. (2005). New dependence coefficients. Examples and applications to statistics. \textit{Probability Theory and Related Fields}\textbf{132}, 203--236.
%    \bibitem{DedeckerMerlevedeVolny2007} Dedecker, J., Merlevede, F. and Volny, D. (2007). \textit{On the weak invariance principle for non-adapted sequences under projection criteria}. J Theor Probab \textbf{20}, 971--1004. 
%\bibitem{EdgarMilletSucheston1981} Edgar, G. A., Millet, A. and Sucheston, L. (1981). 
%On compactness and optimality of stopping times, \textit{LNM 939}, Springer, 36 - 61.
\bibitem{DentchevaEtAl2017} Dentcheva, D., Penev, S. and Ruszczynski, A. (2017). \textit{Statistical estimation of composite risk functionals and risk optimization problems}. Annals of the Institute of Statistical Mathematics \textbf{69}, 737--760.
%\bibitem{Dudley1999} Dudley, R. M. (1999). \textsl{Uniform central limit theorems}. Cambridge University Press, Cambridge.
\bibitem{Dudley2002} Dudley, R. M. (2002). \textit{Real Analysis and Probability}. Cambridge University Press, Cambridge.
\bibitem{EdgarSucheston1992} Edgar, G. A. and Sucheston, L. (1992). \textsl{Stopping times and directed processes}. Cambridge University Press, Cambridge.
%\bibitem{Edwards1987} Edwards, D. A. (1987). {\it On a theorem of Dvoretsky, Wald, and Wolfowitz concerning Liapunov measures}. Glasgow Math. J. {\bf 29}, 205 -- 220.
\bibitem{EichhornRoemisch2007} Eichhorn, A. and R\"omisch, W. (2007). Stochastic integer programming: Limit theorems and confidence intervals, {\em Mathematics of Operations Research} 32, 118--135.
%\bibitem{ElKarouiRavanelli2009} El Karoui, N. and Ravanelli, C. (2009). \textit{Cash sub-additive risk measures and interest ambiguity.} Mathematical Finance \textbf{19}, 561--590.
%\bibitem{Floret1978} Floret, K. (1978). Weakly compact sets, {\it LNM 801}, Springer.
\bibitem{FoellmerSchied2011} F\"ollmer, H. and A. Schied (2011). \textsl{Stochastic Finance}. de Gruyter, Berlin, New York (3rd ed.).
%\bibitem{FoellmerPenner2006}  F\"ollmer, H. and I. Penner (2006). \textit{Convex Risk Measures and the Dynamics of their Penalty Functions}. Statistics Decisions \textbf{24}, 61--96.
%\bibitem{Gaenssler1971} G\"anssler, P. (1971). {\it Compactness and sequential compactness in spaces of measures}. Z. Wahrscheinlichkeitstheorie verw. Geb. {\bf 17}, 124--146.
\bibitem{GineNickl2016} Gine, E. and Nickl, R.  (2016). \textit{Mathematical Foundations of Infinite-Dimensional Statistical Models}. Cambridge University Press, Cambridge.
\bibitem{GuiguesEtAl2017} 
Guigues, V., Juditsky, A. and Nemirovski, A. (2017). Non-asymptotic confidence bounds for optimal value of a stochastic program. 
\textit{Optimization Methods and Software} 32, 1033--1058.
\bibitem{GuiguesKraetschmerShapiro2018} Guigues, V., Kr\"atschmer, V. and Shapiro, A. (2018). A central limit theorem and hypotheses testing for risk-averse stochastic programs. \textit{SIAM J. OPTIM.} 28, 1337--1366.
%\bibitem{Hartung1982} Hartung, J. (1982). \textit{An extension of Sion's minimax theorem with an application to a method for constrained games.} Pacific Journal of Mathematics \textbf{103}, 401--408.
%\bibitem{HaughKogan2004} Haugh, M. B. and Kogan, L. (2004). {\it Pricing American options: A duality approach}. Oper. Res. {\bf 52}, 258 -- 270.
%\bibitem{Haussler1995} Haussler, D. {\em Sphere packing numbers for subsets of the Boolean n-cube with bounded Vapnik-Chernvornenkis dimension}. Journal of Combinatorical Theory, Series A \textbf{69}, 217--232 (1995).
%\bibitem{Hess1996} Hess, Chr. (1996). {\em Epi-convergence of sequences of normal integrands and strong consistency of the maximum likelihood estimator}. Annals of Statistics \textbf{24}, 1298--1315.
\bibitem{Kaasetal2008} Kaas, R., Goovaerts, M., Dhaene, J. and Denuit, M. (2008). \textit{Modern Actuarial Risk Theory}, Springer, Berlin and Heidelberg (2nd ed.).
\bibitem{KainaRueschendorf2007} Kaina, M. and R\"uschendorf, L. (2009). \textit{On convex risk measures on $L^{p}-$spaces}. Math. Methods Oper. Res. \textbf{69}, 475 -- 495. 
%\bibitem{Kechris1995} Kechris, A. (1995). {\it Classical descriptive set theory}. Springer, New York.
%\bibitem{Kindler1979} Kindler, J. (1979). \textit{Minimaxtheoreme und das Integraldarstellungsproblem}. Manuscripta Mathematica \textbf{29}, 277--294.
%\bibitem{KingmanRobertson1968} Kingman, J. F. C. and Robertson, A. P. (1968). {\it On a theorem of Lyapunov}. J. London Math. Soc. \textbf{43}, 347 -- 351.
%\bibitem{Koenig1982} K\"onig, H. (1982). \textit{On some basic theorems in convex analysis}, in: B. Korte (Ed.), 
%"Modern Applied Mathematics - Optimization and Operations Research'', 
%North-Holland, Amsterdam, 
%pp. 107-144.
%\bibitem{Koenig1997} K\"onig, H. (1997). \textit{Measure and Integration}. Springer, Berlin and Heidelberg. 
%\bibitem{Koenig1998} K\"onig, H. (1998). \textit{Measure and Integration: Integral representations of 
%isotone functionals}. Annales Universitatis Saraviensis \textbf{9}, 123-153.
%\bibitem{Koenig2001} K\"onig, H. (2001). \textit{Sublinear functionals and conical measures}. Arch. Math. 77 \textbf{77}, 56--64.
%\bibitem{Kallenberg1997} Kallenberg, O. (1997). \textit{Foundation of modern probability}. Springer, New York.
%\bibitem{KimPollard1990} Kim, J. and Pollard, D. (1990). Cube root asymptotics. {\em Annals of Statistics} \textbf{18}, 191--219.
%\bibitem{KosmolMueller-Wichards2011} Kosmol, P. and M\"uller-Wichards, D. (2011). \textit{Optimization in function spaces}. de Gruyter, Berlin.
%\bibitem{KaratzasShreve1991} Karatzas, I. and Shreve, S. E. (1991).
% {\em Brownian motion and stochastic calculus.} Springer, New York.
%\bibitem{KolmogorovTikhomirov1959} Kolmogorov, A. N. and Tikhomirov, V. M. (1959). $\varepsilon$-entropy and $\varepsilon$-capacity of sets in function spaces. {\em Uspekhi Math. Nauk} 14, 3--86.
%\bibitem{KosmolMueller-Wichards2012} Kosmol, P. and M\"uller-Wichards, D. (2012). {\em Optimization in function spaces. With stability considerations in Orlicz spaces.} de Gruyter, Berlin.
\bibitem{Kosorok2008} Kosorok, M. R. (2008). \textit{Introduction to empirical processes and semiparametric inference}. Springer, New York.
\bibitem{Kraetschmer2022} Kr\"atschmer, V. (2023). Nonasymptotic upper estimates for errors of the sample average approximation method to solve risk averse stochastic programs. Forthcoming in SIAM J. Optim. (arXiv: 2301.05539).
\bibitem{KraetschmerSchiedZaehle2017} Kr\"atschmer, V., Schied, A. and Z\"ahle, H. (2017). Domains of weak continuity of statistical functionals with a view towards robust statistics, {\em Journal of Multivariate Analysis} 158, 1--19.
%\bibitem{KraetschmerUrusov2019} Kr\"atschmer, V. and Urusov, M. (2019). A vector-valued Kolmogorov-Chentsov type theorem with applications to central limit theorems for vector-valued random processes. Preprint.
%\bibitem{KraetschmerZaehle2017} Kr\"atschmer, V. and Z\"ahle, H. (2017). \textit{Statistical inference for expectile-based risk measures}. Scandinavian Journal of Statistics \textbf{44}, 425--454.
\bibitem{KuelbsEtAl2013} Kuelbs, J., Kurtz, T. and Zinn, J. (2013). A clt for empirical processes involving time-dependent data. {\em Annals of Probability} \textbf{41}, 785--816.
%\bibitem{Lindner2008} Lindner, A. M. (2008). \textit{Stationarity, mixing, distributional properties and moments of GARCH$(p,q)$ processes}. In: T.G. Andersen, T. G., Davis, R. A., Krei\ss, J.-P., Mikosch, T. (Eds.), Handbook of financial time series, Springer-Verlag, Berlin, 41--69.
%\bibitem{Mokkadem1988} Mokkadem, A. (1988). \textit{Mixing properties of ARMA processes}. Stochastic Process. Appl. \textbf{29,} 309--315.
%\bibitem{Kraetschmer2009} Kr\"atschmer, V. (2009). {\it Compactness in spaces of inner regular measures and a general Portmanteau lemma}. Journal of Mathematical Analysis and Applications {\bf 351}, 792--803.
%\bibitem{KraetschmerSchoenmakers2010} Kr\"atschmer, V. and Schoenmakers, J. (2010). {\it Representations for optimal stopping under dynamic monetary utility functionals}. SIAM J. Financial Math. {\bf 1}, 811--832.
%\bibitem{KuehnRoesler1998} K\"uhn, Z. and R\"osler, U. (1998). {\it A generalization of Lyapunov's convexity theorem with applications in optimals stopping}. Proceedings of the American Mathematical Society {\bf 126}, 769 -- 777.
%\bibitem{KupperSchachermayer2009} Kupper, M. and Schachermayer, W. (2009). 
%{\it Representation results for law invariant time consistent functions}. Math. Financ. Econ {\bf 2}, 189 -- 210.
%\bibitem{Levin1978} Levin, V.L. (1978). Borel sections of many-valued maps. {\em Sibirsk Mat. J.} \textbf{19}, 617--623 (in Russian); english translation: {\em Sibirian Mat. J.} \textbf{19}, 434--438.
%\bibitem{Levin1994} Levin, V. L. (1994). A characterization theorem for normal integrands with applications to descriptive function theory, functional analysis and nonconvex optimization. {\em Set-Valued Analysis} \textbf{2}, 395--414.
%\bibitem{Lifshits1982} Lifshits, M. A. (1982). \textit{On the absolute continuity of distributions of functionals of random processes}. Theory Prob. Appl. \textbf{27}, 600--607.
\bibitem{McNeilEmbrechtsFrey2005} McNeil, A., Frey, R. and Embrechts, P. (2005). \textit{Quantitative Risk Management}, Princeton University Press, Princeton.
%\bibitem{Parthasarathy1967} Parthasarathy, K. R. (1967). {\it Probability measures on metric spaces}. Academic Press, New York.
%\bibitem{Peng1997} Peng, S. (1997). Backward SDE and related g-expectations,  in: El Karoui, N. and Mazliak, L. (eds.), Backward stochastic differential equations, Pitman Res. Notes Math. Ser. Vol. 364, Longman, Harlow, 1997, pp. 141-159.
%\bibitem{Riedel2009} Riedel, F. (2009). \textit{Optimal stopping with multiple priors}. Econometrica, \textbf{77}(3), 857-908.
%
%\bibitem{Rogers2002} Rogers, L. C. G. (2002). {\it Monte Carlo valuation of American options}. 
%Mathematical Finance {\bf 12}, 271 -- 286.
%3
%\bibitem{KainaRueschendorf2007} Kaina, M. and R\"uschendorf, L. (2009). {\it On convex risk measures on $L^{p}-$spaces}. Math. Methods Oper. Res. {\bf 69}, 475 -- 495. 
%\bibitem{Munkres2000} Munkres, J. R. (2000). {\em Topology}, Prentice Hall, Englewood Cliffs, N.J. (2nd ed.).
\bibitem{Pfanzagl1994} Pfanzagl, J. (1994). {\em Parametric Statistical Theory}, de Gruyter, Berlin, New York.
\bibitem{Pflug1999} Pflug, G. Ch. (1999). Stochastic programs and statistical data. {\em Annals of Operations Research} \textbf{85}, 59--78.
\bibitem{PflugRoemisch2007} Pflug, G. Ch. and R\"omisch, W. (2007). \textit{Modeling, Measuring and Managing Risk}, World Scientific, Singapore.
%\bibitem{Rajput1972} Rajput, R. S. (1972). \textit{Gaussian measures on $L_p-$spaces $1\leq p < \infty$}. Journal of Multivariate Analysis \textbf{2,} 382--403.
%\bibitem{RockafellarWets1998} Rockafellar, R.T. and Wets, J-B. (1998). 
%{\em Variational Analysis}, Springer, Berlin/Heidelberg.
\bibitem{Roemisch2003} R\"omisch, W. (2003). Stability of stochastic programming problems, in: \textit{Stochastic programming}, Ruszczynski, A. and Shapiro, A. (eds.), Handbooks in Operations Research and Management Science 10, Elsevier, Amsterdam, 483--554.
\bibitem{Roemisch2006} R\"omisch, W. (2006). Delta method, infinite dimensional, {\em In:} S.~Kotz, C.B.~Read, N.~Balakrishnan, B.~Vidakovic (Eds.), {\em Encyclopedia of Statistical Sciences}, 16, Wiley, New York.
\bibitem{Rueschendorf2013} R\"uschendorf, L. (2013). \textit{Mathematical risk analysis}, Springer, Berlin, Heidelberg.
%\bibitem{RuSha} Ruszczynski, A. and Shapiro, A. (2006). \textit{Optimization of convex risk functions.}  Mathematics of Operations Research \textbf{31}, 433--451.
\bibitem{Shapiro2013a} Shapiro, A. (2013). Consistency of sample estimates of risk avers stochastic program. {\em Journal of Applied Probability} \textbf{50}, 533--541.
\bibitem{ShapiroEtAl} Shapiro, A., Dentcheva, D. and Ruszczynski, A. (2014). {\em Lectures on stochastic programming.} MOS-SIAM Ser. Optim., Philadelphia (2nd ed.).
%\bibitem{Simons1972} Simons, S. (1972). \textit{A convergence theorem with boundary}. Pacific J. Math. \textbf{40}, 703--708.
%\bibitem{Talagrand1994}Talagrand, M. (1994). Sharper bounds for Gaussian and empirical processes. {\em Annals of Statistics} \textbf{22}, 28--76.
%\bibitem{VakhaniaEtAl.1987} Vakhania, N. N., Tarieladze, V. I. and Chobanyan, S. A. (1987). \textit{Probability distributions on Banach spaces}, Kluwer, Dordrecht.
%\bibitem{Vallander1974} Vallender, S. S. (1974). \textit{Calculations of the Wasserstein distance between probability distributions on the line}. Theory Probab. Appl.
%\textbf{18}, 784--786. 
%\bibitem{vandeGeer2000} van de Geer, S. (2000). \textit{Empirical processes in m-estimation}. Cambridge University Press, Cambridge.
%\bibitem{vanderVaart1998} van der Vaart, A.W. (1998). \textit{Asymptotic statistics}. Cambridge University Press, Cambridge.
\bibitem{vanderVaartWellner1996} van der Vaart, A.W. and Wellner, J.A. (1996). \textit{Weak convergence and empirical processes}. Springer, New York.
%\bibitem{WittingMueller-Funk1995} Witting, H. and M\"uller-Funk, U. (1995). 
%\textit{Mathematische Statistik II}. Teubner, Stuttgart.
%\bibitem{XuZhou2013} Xu, Z. Q. and Zhou, X. Y. (2013). {\it Optimal stopping under probability distortion}. Annals of Applied Probability {\bf 23}, 251 -- 282.
\bibitem{Wilansky1970} Wilansky, A. (1970). \textit{Topology for analysis}. Ginn, Waltham, Massachusetts.
\end{thebibliography}
\end{document}